\theoremstyle{plain}
\newtheorem{Thm}{Theorem}
\newtheorem{Prop}[Thm]{Proposition}
\newtheorem{Cor}[Thm]{Corollary}
\newtheorem{Lem}[Thm]{Lemma}
\theoremstyle{definition}
\newtheorem{Expl}[Thm]{Example}
\theoremstyle{Remark}
\numberwithin{equation}{section}
\title{Variation of mixed Hodge structures and 
the positivity for algebraic fiber spaces}
\author{Yujiro Kawamata}
\begin{document}

\maketitle

\begin{abstract}
These are the lecture notes based on earlier papers 
with some additional new results. 
New and simple proofs are given for local freeness theorem
and the semipositivity theorem.
A decomposition theorem for higher direct images of dualizing sheaves 
of Koll\'ar is extended to the sheaves of differential forms 
of arbitrary degrees in the case of a well 
prepared birational model.
We will also prove the log versions of some of the results, i.e., the case 
where we allow horizontal boundary components.
\end{abstract}

These are the lecture notes of a mini-course given 
at Institut Henri Poincar\'e 
for the thematic term \lq\lq Complex Algebraic Geometry'' in May 2010.
The author would like to thank the organizers Olivier Debarre, 
Christoph Sorger and Claire Voisin.

This is based on my papers \cite{AF} and \cite{Reducible} with some additional 
new results.
New and simple proofs are given for local freeness theorem 
(proved in \cite{AB} in the case of the direct image sheaf and 
in \cite{Kollar} and \cite{Nakayama} in the general case), 
and thereby the semipositivity theorem \cite{AB}.
A decomposition theorem for higher direct images of dualizing sheaves 
of Koll\'ar \cite{Kollar} is extended to the sheaves of differential forms 
of arbitrary degrees in the case of a well 
prepared birational model.
We note that we have to consider differential forms of all degrees 
in order to use to the full power of the Hodge theory.

An algebraic fiber space is the relative version of an algebraic variety.
A good birational model of an algebraic variety is a smooth model.
The corresponding good model of an algebraic fiber space is a 
{\em weak semistable model} found by Abramovich and Karu \cite{AK}.
A {\em well prepared model} is a birational model which satisfies the
same conditions as a weak semistable model except that all the fibers 
are reduced.
One can obtain a weak semistable model from a well prepared model by a 
finite and flat base change.

From a well prepared model, we shall construct a good topological model 
by applying a method of log geometry (\cite{Kato}).
We associate a topological space $X^{\log}$, called a {\em log space}, 
to a log pair $(X,B)$ 
consisting of a complex analytic variety with a boundary divisor.
The log space $X^{\log}$ can be regarded as a compactification 
of the open space $X \setminus B$.
It is a real analytic manifold with boundary and corners.
This compactification is homeomorphic to a closed subset of 
$X \setminus B$, the complement of a tubular neighborhood 
of $B$.
But because we have points of the log space 
$X^{\log}$ above the boundary divisor $B$, 
a complicated limiting argument becomes unnecessary 
in the proofs of our results.
For example, the monodromy actions around the singular fibers 
become very easily handled on the log space, 
because the singularities of degenerate fibers disappear in this 
topological model, and the fibers over the boundary have stratifications.

We define the structure sheaf of a log space in a way that it contains the 
logarithms of local coordinates, and then the sheaves of differential forms
that they contain logarithmic differentials.
We prove our main result on the existence of 
a cohomological mixed Hodge complex on any fiber, 
a concept defined by Deligne \cite{Deligne-II}.
As corollaries, we obtain $E_1$ and $E_2$ degenerations of spectral sequences
with respect to the Hodge and weight filtrations respectively.

The construction of this paper is as follows.
We recall some preliminary results in \S\S 1 to 5, 
the weak semistable reduction theorem of Abramovich and Karu in \S1,
the construction of a topological space associated to a log scheme in \S2, 
a criteria for the decomposition of an object in a derived category 
due to Deligne in \S3, 
the definitions related to the cohomological mixed Hodge complex 
due also to Deligne in \S4, 
and the Griffiths positivity theorem and the semi-simplicity theorem 
of the category of variations of polarized Hodge structures in \S5. 

We shall prove our results in \S\S 6 to 8 on the existence of a mixed 
Hodge complex.
We shall construct weight filtrations on the sheaves on the log spaces in \S6.
In order to compare the weight filtrations on the topological leval and
the De Rham level, we introduce the sheaves of differential forms in 
\S7 and prove Poincar\'e lemmas.
Our construction is justified in \S8 that the weight filtration and the 
Hodge filtration defined on a singular fiber define a 
cohomological mixed Hodge complex.
The degenerations of spectral sequences follow from the general machinery 
explained in \S4.
The local freeness follows as a corollary.

In \S9 we prove the semipositivity theorem.
First we treat the case where the fiber space is weakly semistable and there 
is no horizontal boundary components follwoing \cite{AB}.
This is a consequence of the 
Griffiths positivity theorem for the open part and the boundary considerations.
Then the theorem is extended to the case of a well prepared fiber space
which may have horizontal boundary components.
We prove decomposition theorems in the final \S10 for the higher direct 
images of the constant sheaf and the sheaf of logarithmic differential forms. 

We work over $\mathbf{C}$ in this paper.
The topology of an algebraic variety we consider is the analytic topology
of the underlying complex analytic space.

The author would like to thank Taro Fujisawa for pointing out 
a serious error in the previous version of this paper.

%%%%%%%%%%%%%%%%%%%%%%%%%%%%%%%%%%%%%%%%%%%%%%%%%%%%%
\section{Weak semistable reduction}

We recall a weak semistable reduction theorem of Abramovich and Karu \cite{AK}.
A general reference for toroidal varieties is \cite{KKMS}.

A {\em toroidal variety} $(X,B)$ is a pair consisting of a normal variety
and an effective reduced divisor such that each point $x \in X$ has a 
{\em toric local model} in the following sense:
there is an complex analytic neighborhood $x \in U$ such that the pair 
$(U,B \vert_U)$ is complex analytically isomorphic to another pair
$(U',B'\vert_{U'})$ which comes from a toric variety $(X',B')$,
a normal variety with an action of an algebraic torus $X' \setminus B'$.
We assume moreover that the pair is {\em strict} or 
{\em without self-intersection} in the sense that each irreducible component
of $B$ is normal.

A toroidal variety $(X,B)$ is said to be {\em smooth} if 
$X$ is smooth and $B$ has only normal crossings.
It is {\em quasi-smooth} if there exists a local toric model of each point 
which has only abelian quotient singularities.

A {\em toroidal morphism} $f: (X,B) \to (Y,C)$ between toroidal varieties
is one which has a toric local model at each point $x \in X$ 
in the following sense: there is a toric morphism between local models 
$f': (X',B') \to (Y',C)$, i.e., $f' \vert_{X' \setminus B'}:
X' \setminus B' \to Y' \setminus C'$ is a surjective homomorphism of 
algebraic tori, and $f'$ is equivariant under the torus actions.

A toric model at each point $x \in X$ is described by a rational 
polyhedral closed convex cone 
$\sigma_x$ in a finite dimensional real vector space with an integral lattice.
By gluing these cones, we construct a {\em fan} $\Delta_{X,B}$.
Unlike the toric case, the fan is not embedded in a fixed real vector space,
and the pair $(X,B)$ cannot be reconstructed from the fan.
But the local structure of the pair as well as its birational
modifications are completely described by the fan. 

The pair $(X,B)$ is quasi-smooth if and only if 
each cone $\sigma_x$ is simplicial,
and smooth if and only if 
in addition that the lattice points on the edges of the 
cone generate a saturated subgroup of the lattice.
A rational subdivision of the fan corresponds to a birational modification.

\begin{Expl}
Let $f: (X,B) \to (Y,C)$ be a toroidal morphism between smooth toroidal
varieties.
Take an arbitrary point $x \in X$ and its image $y = f(x)$.
Then there exist local coordinates $(x_1,\dots,x_n)$ and $(y_1,\dots,y_m)$
around $x$ and $y$ respectively such that we can write
$f^*y_i = \prod_j x_j^{r_{ij}}$, where the $r_{ij}$ are non-negative 
integers such that $r_{ij} \ne 0$ for at most one $i$ for each $j$.

More generally, if $(X,B)$ is a quasi-smooth variety instead of a smooth one,
then there is an analytic neighborhood $U \subset X$ of the given point 
$x \in X$ and a finite Galois toroidal covering 
$\pi: (X',B') \to (U,B \vert_U)$ from a smooth toroidal variety with 
a point $x' \in X'$ such that $\pi(x') = x$.
There exist local coordinates $(x'_1,\dots,x'_n)$ around $x' \in X$ which are 
semi-invariant with respect to the Galois action.
Therefore we have a similar local expression also in this case.
\end{Expl}

The following theorem of Abramovich and Karu gives a well prepared birational 
model of an algebraic fiber space:

\begin{Thm}[\cite{AK}]
Let $f_0: X_0 \to Y_0$ be a surjective morphism of projective varieties 
with geometrically connected fibers defined over a field of characteristic 
zero, and $Z$ a closed subset of $X_0$.
Then there exist a quasi-smooth projective toroidal variety $(X,B)$, 
a smooth projective toroidal variety $(Y,C)$, a projective morphism 
$f: X \to Y$ with geometrically connected fibers,
and projective birational morphisms $\mu_X: X \to X_0$, $\mu_Y: Y \to Y_0$ 
such that $\mu_Y \circ f = f_0 \circ \mu_X$ 
and which satisfy the following conditions:

(1) $f: (X,B) \to (Y,C)$ is a toroidal morphism.

(2) All the fibers of $f$ have the same dimension.

(3) $\mu_X^{-1}(Z) \subset B$.

Moreover there exists another smooth projective toroidal variety 
$(Y',C')$ and a finite surjective morphism $\pi: Y' \to Y$ 
such that $C' = \pi^{-1}(C)$, and satisfy the
following condition:

(4) The induced morphism $f': (X',B') \to (Y',C')$ 
for the normalization $(X',B')$ of the fiber
product $(X,B) \times_Y Y'$ is still a toroidal morphism 
from a quasi-smooth projective toroidal variety 
to a smooth projective toroidal variety, such that
all the fibers of $f'$ are reduced.
\end{Thm}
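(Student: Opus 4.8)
The plan is to pass first to a toroidal model and then to carry out all the remaining work combinatorially on the associated fans, so that only the first step requires any hard birational geometry.

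\emph{Step 1 (reduction to a toroidal morphism).} First I would apply Hironaka's resolution of singularities to replace $X_0$ and $Y_0$ by smooth projective varieties and to arrange that the total transform of $Z$, the locus where $f_0$ fails to be smooth, and the image of that locus in $Y_0$ are all contained in simple normal crossing divisors $B_0 \subset X_0$ and $C_0 \subset Y_0$ with $f_0(B_0) \subseteq C_0$. I would then invoke the toroidalization theorem for morphisms in characteristic zero --- which one deduces, after considerable work, from resolution of singularities together with de~Jong's semistable reduction theorems --- to perform further projective birational modifications $\mu_X$ and $\mu_Y$, compatible over $f_0$, after which $f \colon (X,B) \to (Y,C)$ is a toroidal morphism of \emph{smooth} toroidal varieties with $\mu_X^{-1}(Z) \subseteq B$; since $Y$ is normal and the generic fibre of $f_0$, hence of $f$, is connected, the fibres of $f$ are automatically geometrically connected. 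This is the step I expect to be the main obstacle: it is where all the heavy input (resolution of singularities, de~Jong-type semistable reduction, and the work needed to globalize the toroidal structure over a base of arbitrary dimension) is consumed, whereas Steps 2 and 3 are elementary convex geometry together with the standard toric dictionary.

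\emph{Step 2 (equidimensionalization).} The toroidal morphism $f$ is encoded by a morphism of fans $\phi \colon \Delta_{X,B} \to \Delta_{Y,C}$. I would apply the combinatorial equidimensionalization lemma for such maps: after a suitable subdivision $\Delta_Y' \to \Delta_{Y,C}$, its pullback to $\Delta_{X,B}$, and a further \emph{simplicial} subdivision of that pullback, the resulting map $\Delta_X' \to \Delta_Y'$ becomes \emph{equidimensional}, i.e. every cone of $\Delta_X'$ is mapped onto a cone of $\Delta_Y'$. Translating back through the toric dictionary, these subdivisions correspond to projective birational modifications that keep $(Y,C)$ smooth toroidal but in general only make $(X,B)$ \emph{quasi-smooth} --- a simplicial subdivision of the source need not be regular, and this is exactly why the smoothness of $X$ cannot be retained --- with $f \colon (X,B) \to (Y,C)$ still toroidal and now equidimensional; as every modification here is an isomorphism over $Y \setminus C$ and over $X \setminus B$, the inclusion $\mu_X^{-1}(Z) \subseteq B$ is preserved, and connectedness of the fibres is unaffected. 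Finally, a quasi-smooth variety is Cohen--Macaulay while $Y$ is smooth, so an equidimensional $f$ is flat and in particular all its fibres have dimension $\dim X - \dim Y$. This yields assertions (1), (2) and (3).

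\emph{Step 3 (base change to reduced fibres).} For a cone $\sigma$ of $\Delta_X'$ lying over a cone $\tau$ of $\Delta_Y'$, equidimensionality makes the induced map of lattices $N_\sigma \to N_\tau$ surjective over $\mathbf{Q}$, and I set $m_\sigma = [N_\tau : \mathrm{image}(N_\sigma)]$; these indices are precisely the multiplicities of the components of the degenerate fibres of $f$. Choosing an integer $\ell$ divisible by every $m_\sigma$, I would take the associated Kummer-type toroidal cover $\pi \colon (Y',C') \to (Y,C)$ --- locally the $\ell$-th root cover ramified exactly along $C$ --- followed, if necessary, by a toric resolution of $Y'$ chosen compatibly and not moving the generic point, so that $(Y',C')$ is smooth toroidal with $C' = \pi^{-1}(C)$ and $\pi$ is finite and surjective. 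Let $(X',B')$ be the normalization of $(X,B) \times_Y Y'$; its fan is read off from the fibre product of lattices, so $f' \colon (X',B') \to (Y',C')$ is again toroidal and equidimensional, and a further toric resolution of $X'$ not touching $Y'$ keeps it quasi-smooth with geometrically connected fibres. The point of the choice of $\ell$ is the lattice computation showing that after this base change every multiplicity becomes $1$, so that all fibres of $f'$ are reduced; this is assertion (4).
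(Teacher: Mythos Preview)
Your three-step outline --- toroidalize, equidimensionalize combinatorially on the fans, then base-change to kill multiplicities --- is exactly the strategy the paper sketches, and your Step~2 matches it essentially verbatim. Two points of divergence are worth noting.

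In Step~1 you invoke a toroidalization theorem as a black box. The paper's sketch is more specific about the mechanism: one proceeds by induction on the relative dimension $\dim X_0 - \dim Y_0$, using a generic projection to factor $f_0$ as $X_0 \to X_1 \to Y_0$ with $\dim X_0 = \dim X_1 + 1$; the inductive hypothesis handles $X_1 \to Y_0$, and the relative-curve piece $X_0 \to X_1$ is prepared via semistable reduction using the moduli space of pointed stable curves (this is precisely the de~Jong input). Your formulation is not wrong, but it hides the actual architecture of the argument.

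In Step~3 there is a genuine slip. You propose a Kummer cover of $Y$ ramified exactly along $C$, followed ``if necessary'' by a toric resolution of $Y'$, and then assert $\pi$ is finite. A resolution would destroy finiteness, so these two requirements are in tension; and a global Kummer cover along $C$ requires the components of $C$ to be divisible in $\mathrm{Pic}(Y)$, which you have not arranged. The paper instead cites the covering trick of \cite{AB} (Kawamata's covering lemma), which produces a smooth $Y'$ with $\pi$ finite by allowing additional branching along auxiliary general divisors; the paper explicitly remarks right after the statement that the discriminant locus of $\pi$ \emph{properly} contains $C$, so $\pi$ is not toroidal for $(Y,C)$. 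Replacing your Kummer-plus-resolution step by that covering construction fixes the issue and brings your sketch in line with the paper's.
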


We note that the covering morphism $\pi$ is flat because 
$Y$ is smooth.
The discriminant locus of $\pi$ is a normal crossing divisor which
properly contains $C$.
Therefore $\pi$ is not necessarily a toroidal morphism.

\begin{proof}[Idea of proof]
The proof is based on the idea of De Jong's alteration.
Because of the assumption on the characteristic of the base field, 
the ramification theory is simple and the singularities can be resolved.

First we prove the existence of a birational model of 
$f_0: X_0 \to Y_0$ which is a toroidal morhism between 
smooth toroidal varieties.
We proceed by induction on the relative dimension $\dim X_0 - \dim Y_0$.
By using the generic projection, we decompose the morphism $f_0$ to 
two morphisms $g: X_0 \to X_1$ and $h: X_1 \to Y_0$ such that 
$\dim X_0 = \dim X_1 + 1$, and set $Z_1 = \emptyset$.
We replace a birational model of $h: X_1 \to Y_0$ by the induction assumption,
while we prepare the morphism $g: X_0 \to X_1$ by the semistable reduction 
using the moduli space of pointed curves.

Secondly we modify the toroidal model by using the fans associated to the
toroidal structure.
The morphism becomes equi-dimensional when we subdivide the fans $\Delta_X$
and $\Delta_Y$ of $X$ and $Y$
so that the images of $1$-dimensional cones in $\Delta_X$ become 
$1$-dimensional cones in $\Delta_Y$.
In order to make the fibers reduced, we apply the covering trick in
\cite{AB}.
\end{proof}

We call the morphism $f: (X,B) \to (Y,C)$ a 
{\em well prepared birational model} of an algebraic fiber space
$f_0:X_0 \to Y_0$,
and its finite base change $f': (X',B') \to (Y',C')$ 
a {\em weak semistable model}.
The latter is \lq\lq weak'' because the pair $(X',B')$ 
has some mild singularities, i.e., abelian quotient singularities.
But these singularities cause no trouble as explained 
in the later sections.

%%%%%%%%%%%%%%%%%%%%%%%%%%%%%%%%%%%%%%%%%%%%%%%%%%%%%
\section{Log space}

The idea of the real oriented blowing-ups
of the (quasi-)smooth toroidal pairs
and the application to semistable degenerations of varieties 
can go back at least to a book \cite{Persson}.
A general reference of this section is \cite{Kato} and \cite{KN}.

A {\em log scheme} $(X,M,\alpha)$ 
consists of a scheme $X$, a sheaf 
of semi-groups with unit $M$, and a semi-group homomorphism
$\alpha: M \to \mathcal{O}_X$ such that 
$\alpha^{-1}(\mathcal{O}_X^*) \cong \mathcal{O}_X^*$, where 
the semi-group structure of $\mathcal{O}_X$ is given by the 
multiplication.

A {\em morphism} of log schemes $(f,\phi): (X,M,\alpha) \to 
(Y,N,\beta)$ consists of a morphism of schemes
$f: X \to Y$ and a semi-group homomorphism
$\phi: f^{-1}N \to M$ such that $\alpha \circ \phi =
f^* \circ f^{-1}\beta$.

\begin{Expl}
(1) Let $T = \text{Spec }\mathbf{C}$, and 
$M_T = \mathbf{R}_{\ge 0} \times S^1$.
We define $\alpha_T: M_T \to \mathbf{C}$ by
$\alpha_T(r,\theta) = re^{i\theta}$.
Then $(T,M_T,\alpha_T)$ is a log scheme.

(2) If $(X,B)$ is a toroidal variety, 
then the subsheaf of $\mathcal{O}_X$ given by
\[
M = \{h \in \mathcal{O}_X \,\vert\, h \vert_{X \setminus B} \in 
\mathcal{O}_X^*\}
\]
defines a log structure on $X$.
\end{Expl}

Let $(X,M,\alpha)$ be a log scheme defined over $\mathbf{C}$.
We define the {\em associated log space} 
$X^{\log}$ to be the set of all log morphisms 
$(f,\phi): (T,M_T,\alpha_T) \to (X,M,\alpha)$ from the log scheme defined in 
the above example.
Let $\rho: X^{\log} \to X$ be the natural map defined by
$\rho(f,\phi) = \text{Im}(f)$.

\begin{Expl}
If $(X,B) = (\mathbf{C},0)$, then 
$X^{\log} = \mathbf{R}_{\ge 0} \times S^1$ and
$\rho(r,\theta) = re^{i\theta}$.

Indeed let $t$ be a coordinate on $X$ and assume that $f(T) = t_0$.
If $t_0 \ne 0$, then the image of $t \in M_X$ is uniquely determined.
If $t_0 = 0$, then the image of $t$ is of the form $(0,\theta)$. 
\end{Expl}

If $(X,B)$ is a quasi-smooth toroidal variety, 
then the set $X^{\log}$ has a structure of a manifold with boundary and corners as shown in the following proposition.
It is a kind of a compactification of the open variety $X \setminus B$:

\begin{Prop}
Let $(X,B)$ be a quasi-smooth toroidal variety.
Then the following hold:

(1) The associated log space $X^{\log}$ has a structure of 
a real analytic manifold with boundary and corners, 
and $\rho: X^{\log} \to X$ is a proper continuous map of topological spaces.

(2) The complex manifold $X \setminus B$ is homeomorphic to
a dense open subset of $X^{\log}$.
There is a small analytic open neighborhood $U$ of $B \subset X$
such that $X \setminus U$ is homeomorphic to $X^{\log}$.
\end{Prop}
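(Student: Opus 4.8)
The plan is to reduce the statement to an explicit local computation using the toric local models, then glue. First I would observe that the construction $X \mapsto X^{\log}$ is functorial and local on $X$ in the following sense: if $U \subset X$ is an open subset, then $\rho^{-1}(U) = U^{\log}$ with its induced log structure, because a log morphism $(T, M_T, \alpha_T) \to (X, M, \alpha)$ with image in $U$ is the same as a log morphism into $(U, M|_U, \alpha|_U)$. Hence it suffices to describe $X^{\log}$ locally. By the definition of a quasi-smooth toroidal variety, every point $x \in X$ has an analytic neighborhood $U$ with a local toric model $(X', B')$ having at worst abelian quotient singularities, and (as in the Example) a finite Galois cover $\pi\colon (X'', B'') \to (U, B|_U)$ from a smooth toric model with semi-invariant coordinates $(x_1, \dots, x_n)$ in which $B''$ is $\{x_1 \cdots x_k = 0\}$.

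Next I would compute $X^{\log}$ in the smooth model case. Using the Example $(\mathbf{C}, 0)^{\log} = \mathbf{R}_{\ge 0} \times S^1$ and the fact that log morphisms from $(T, M_T, \alpha_T)$ respect products of log structures, one gets for the smooth toric chart $\mathbf{C}^n$ with boundary $\{x_1 \cdots x_k = 0\}$ that
\[
(\mathbf{C}^n)^{\log} \;\cong\; (\mathbf{R}_{\ge 0} \times S^1)^k \times (\mathbf{C}^*)^{\,\cdot} \times \mathbf{C}^{\,n-k}\quad\text{(schematically)},
\]
more precisely $(\mathbf{R}_{\ge 0} \times S^1)^k \times \mathbf{C}^{n-k}$, since away from $B$ the image of each coordinate in $M$ is determined by $f$. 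This is visibly a real analytic manifold with boundary and corners: the boundary is $\{r_j = 0 \text{ for some } j\}$, the corners are where several $r_j$ vanish. The map $\rho$ is $(r_j, \theta_j)_{j \le k} \times (x_j)_{j>k} \mapsto (r_j e^{i\theta_j})_{j \le k} \times (x_j)_{j>k}$, which is proper (the fibers over $B$ are compact tori, being products of circles). For the quasi-smooth case, one takes the $\log$ of the smooth Galois cover and passes to the quotient by the (finite) Galois group acting on $X''^{\log}$; this quotient inherits the structure of a real analytic manifold with boundary and corners because the action is by a finite group acting on the smooth-model log space, and the relevant strata of the corner structure are preserved (the boundary $\{r_j = 0\}$ is permuted among themselves). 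Gluing these charts via functoriality gives part (1).

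For part (2), I would note that over $X \setminus B$ the log structure is trivial ($M|_{X \setminus B} = \mathcal{O}_X^*$), so the only log morphisms $(T, M_T, \alpha_T) \to (X, M, \alpha)$ landing there are the ordinary points; hence $\rho^{-1}(X \setminus B) \to X \setminus B$ is a homeomorphism, and in local charts this is the dense open subset $\{r_j > 0 \text{ for all } j\}$, which is indeed dense in $(\mathbf{R}_{\ge 0} \times S^1)^k \times \mathbf{C}^{n-k}$. For the second assertion I would construct the neighborhood $U$ chart-by-chart: in a smooth toric chart take $U \cap (\text{chart}) = \{\,\prod_j |x_j| < \varepsilon\,\}$ near $B$; then $X \setminus U$ in that chart deformation-retracts onto (and is homeomorphic to, via rescaling the radial coordinates $|x_j| \mapsto$ a function valued in $[0,\infty)$) the closure $\{r_j \ge 0\}$, matching $X^{\log}$. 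One has to check these local homeomorphisms are compatible on overlaps and glue to a global one; this requires a partition-of-unity argument to patch the radial rescalings. The main obstacle I anticipate is precisely this last gluing — verifying that the corner structure and the local homeomorphisms to $X^{\log}$ are compatible across the overlaps of toric charts, and that the quotient construction in the quasi-smooth case genuinely yields a manifold-with-corners structure rather than something with worse local models; handling the finite quotients carefully (e.g. checking that the $S^1$-factors and the quotient interact correctly, since the Galois action can mix the angular coordinates) is the delicate point.
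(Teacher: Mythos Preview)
Your approach is essentially the one the paper takes: reduce to a local computation, use the product description $(\mathbf{R}_{\ge 0}\times S^1)^k \times \mathbf{C}^{n-k}$ in a smooth toric chart, pass to a finite abelian quotient in the quasi-smooth case, and glue; for part~(2) the paper likewise just invokes the bijection $\rho^{-1}(X\setminus B)\to X\setminus B$.

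The one point where you hesitate is exactly the point the paper settles with a single remark: the induced action of the finite abelian Galois group $G$ on the smooth log space $(X'')^{\log}$ is \emph{free}. The reason is that $G$ acts on the toric chart by multiplying the semi-invariant coordinates $x_i$ by roots of unity; on the log side this becomes a rotation of the angular $S^1$ factors, and a nontrivial rotation of a circle has no fixed point. Hence any $g\in G$ fixing a point of $(\mathbf{R}_{\ge 0}\times S^1)^k \times \mathbf{C}^{n-k}$ must act trivially on every coordinate, so $g=1$. Once the action is free, the quotient by a finite group is automatically again a real analytic manifold with boundary and corners, with the corner strata simply the images of those upstairs, and your concern about ``worse local models'' evaporates. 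This is the missing sentence in your argument; with it, your proof is complete and matches the paper's.
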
 

\begin{proof}
(1) If $(X,B) = (X_1 \times X_2, p_1^*B_1 + p_2^*B_2)$ for 
smooth toroidal varieties $(X_i,B_i)$ with associated
log spaces $X_i^{\log}$ for $i=1,2$, then
we have $X^{\log} \cong X_1^{\log} \times X_2^{\log}$ and
$\rho \cong \rho_1 \times \rho_2$.

If $(X,B)$ is a quotient of a smooth toroidai variety $(X',B')$ by a 
finite abelian group $G$ acting on $X'$ leaving $B'$ preserved, then 
the associated log space $X^{\log}$ is the quotient of 
$(X')^{\log}$ by the induced group action of $G$ which is fixed point free.

Since $(X,B)$ is covered by standard open subsets as above and the set 
$X^{\log}$ is globally defined, the log spaces associated to this covering 
are glued to yield a manifold with boundary and corners.

(2) This is because the map $\rho$ induces a bijection 
$\rho^{-1}(X \setminus B) \to X \setminus B$.
\end{proof}

\begin{Cor}
Let $f: (X,B) \to (Y,C)$ be a well prepared toroidal morphism.
Then the associated morphism of log spaces 
$f^{\log}: X^{\log} \to Y^{\log}$ 
is a locally trivial topological fiber bundle.
In particular, the higher direct image sheaves 
$R^pf^{\log}_*\mathbf{Z}_{X^{\log}}$
are locally constant sheaves on $Y^{\log}$.
\end{Cor}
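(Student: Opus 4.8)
The plan is to argue locally on the base $Y^{\log}$, reduce to an explicit monomial model, and trivialize that model by hand. Since being a locally trivial topological fiber bundle is local on the base, I would fix $\eta\in Y^{\log}$ with $\rho_Y(\eta)=y\in Y$ and, using that $(Y,C)$ is smooth toroidal, choose coordinates $(y_1,\dots,y_m)$ at $y$ for which the branches of $C$ through $y$ are coordinate hyperplanes; then a neighbourhood of $\eta$ in $Y^{\log}$ is identified with a product of finitely many factors $\mathbf{R}_{\ge 0}\times S^1$ (one per branch of $C$ through $y$) and of small polydiscs. I then cover $(f^{\log})^{-1}(\eta)$ by charts in which, by Example~1 — in the quasi-smooth case one first passes to a finite Galois toroidal cover by a smooth model, on whose log space the group acts freely with $X^{\log}$ as quotient, which is harmless for the bundle statement — the morphism $f$ has the monomial form $f^*y_i=\prod_j x_j^{r_{ij}}$ with $r_{ij}\ge 0$ and, for each $j$, $r_{ij}\ne 0$ for at most one index $i$.

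The second step is to read off $f^{\log}$ in such a chart. As each source coordinate occurs in at most one $y_i$, the $x_j$ split into disjoint blocks $J_1,\dots,J_m$ together with the remaining (horizontal-boundary and interior) coordinates, and correspondingly $f^{\log}$ becomes, locally, a product of \emph{elementary} maps $g_i\colon(\mathbf{R}_{\ge 0}\times S^1)^{J_i}\to\mathbf{R}_{\ge 0}\times S^1$, $((\rho_j,\theta_j))_j\mapsto\bigl(\prod_j\rho_j^{r_{ij}},\ \sum_j r_{ij}\theta_j\bigr)$, one over each $C$-factor of the base, and of a map over the polydisc factors which in the interior directions is a proper submersion of complex manifolds (locally trivial by Ehresmann) and in the horizontal-boundary directions is a trivial product of real oriented blow-ups. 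Since a finite product of locally trivial fiber bundles over a product base is again one (and, the fiber being compact, the local trivializations coming from the charts patch to a trivialization over a neighbourhood of $\eta$), it suffices to treat one elementary factor $g=g_i$; identifying the product decomposition from the coordinates is routine bookkeeping.

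The third step, which is the heart of the matter, is to trivialize $g$ over $\mathbf{R}_{\ge 0}\times S^1$. Writing $k=\#J_i$ and $d_j=r_{ij}\ge 1$, we have $g=g_{\mathrm{rad}}\times g_{\mathrm{arg}}$, where $g_{\mathrm{rad}}\colon[0,\infty)^k\to[0,\infty)$, $(\rho_j)\mapsto\prod_j\rho_j^{d_j}$, and $g_{\mathrm{arg}}\colon(S^1)^k\to S^1$, $(\theta_j)\mapsto\sum_j d_j\theta_j$. The map $g_{\mathrm{arg}}$ is a surjective homomorphism of compact Lie groups, hence a principal bundle, in particular locally trivial. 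For $g_{\mathrm{rad}}$ I would produce a homeomorphism $[0,\infty)^k\cong[0,\infty)\times\mathbf{R}^{k-1}$ compatible with the two maps to $[0,\infty)$: the self-homeomorphism $(\rho_j)\mapsto(\rho_j^{d_j})$ reduces to $d_j=1$, and then one inducts on $k$ from the case $k=2$, where $(\rho_1,\rho_2)\mapsto(\rho_1\rho_2,\rho_1-\rho_2)$ is a homeomorphism onto $[0,\infty)\times\mathbf{R}$ over $[0,\infty)$, with continuous inverse $(u,v)\mapsto(\tfrac{w+v}{2},\tfrac{w-v}{2})$ for $w=\sqrt{v^2+4u}$; the inductive step splits off one coordinate and applies the case $k=2$ to the two resulting $[0,\infty)$-factors. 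Thus $g_{\mathrm{rad}}$, hence $g$, hence $f^{\log}$, is a locally trivial topological fiber bundle. I expect this to be the main obstacle: $g_{\mathrm{rad}}$ is a fibration although its differential vanishes along the deepest corner $\rho=0$, so Ehresmann's theorem for manifolds with corners does not apply directly and one must exhibit the trivialization (equivalently, recognize $g_{\mathrm{rad}}$ as a proper $b$-fibration); every remaining degeneration of $f^{\log}$ is of this same shape.

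For the final assertion, once $f^{\log}$ is a locally trivial fiber bundle with fiber $F$, over a trivializing open $V\subset Y^{\log}$ one has $X^{\log}|_V\cong V\times F$ with $f^{\log}$ the first projection; since $Y^{\log}$ is a manifold with boundary and corners, hence locally contractible, this gives $R^pf^{\log}_*\mathbf{Z}_{X^{\log}}|_V\cong\underline{H^p(F;\mathbf{Z})}$, so $R^pf^{\log}_*\mathbf{Z}_{X^{\log}}$ is locally constant. Since $X$ is projective, $X^{\log}$ is compact, so $F$ is a compact manifold with corners and these stalks are finitely generated; alternatively, $f^{\log}$ is proper (as $\rho_X$ and $\rho_Y$ are proper by the preceding Proposition and $f$ is projective), and the conclusion follows from proper base change.
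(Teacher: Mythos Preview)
Your proof is correct and takes a genuinely different route from the paper's. The paper dispatches the corollary in one line: ``It follows from the fact that the induced morphism $f: X\setminus B\to Y\setminus C$ is a locally trivial topological fiber bundle,'' combined with the preceding Proposition identifying $X^{\log}$ (resp.\ $Y^{\log}$) with the complement of a tubular neighbourhood of $B$ (resp.\ $C$); the idea is that $f^{\log}$ is then a restriction of a fiber bundle already known from Ehresmann on the open part. You instead work entirely on the log side: in each toric chart you factor $f^{\log}$ as a product of elementary monomial maps $(\mathbf{R}_{\ge 0}\times S^1)^k\to\mathbf{R}_{\ge 0}\times S^1$ and trivialize each by hand---the angular part as a surjection of compact Lie groups, the radial part via the explicit homeomorphism $(\rho_1,\rho_2)\mapsto(\rho_1\rho_2,\rho_1-\rho_2)$ and induction on $k$. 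This is more laborious but also more robust: it handles the horizontal boundary and the quasi-smooth (finite-quotient) case transparently, whereas the paper's one-liner is terse on exactly these points (for instance, when $B^h\neq\emptyset$ the open map $X\setminus B\to Y\setminus C$ is not proper, so Ehresmann does not apply verbatim and some care is implicitly required). The one step you only gesture at is patching the chartwise product structures into a single trivialization over a neighbourhood of $\eta$; this is the standard Ehresmann manoeuvre (lift a base vector field by a partition of unity, tangent to the corner strata because your local models are already products, and flow), and is indeed routine once the local product form is established.
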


\begin{proof}
It follows from the fact that the induced morphism 
$f: X \setminus B \to Y \setminus C$ is a 
locally trivial topological fiber bundle.
\end{proof}

The important advantage of $X^{\log}$ over $X \setminus B$
is that the geometric objects are extended over the singular fibers 
so that we do not need limiting argument.

%%%%%%%%%%%%%%%%%%%%%%%%%%%%%%%%%%%%%%%%%%%%%%%%%%%%%
\section{Criteria for decompositions}

We recall Deligne's criteria for an object of a derived category 
to be decomposed into its cohomology groups. 

\begin{Prop}[\cite{Deligne-L}]
Let $x \in D^b(A)$ be an object of a bounded derived category 
of an abelian category $A$.
Then the following conditions are equivalent:

(1) $x \cong \bigoplus_i H^i(x)[-i]$ in $D^b(A)$.

(2) For an arbitrary exact functor $T : D^b(A) \to D(B)$ 
to the derived category of an arbitrary
abelian category $B$, the Grothendieck-Verdier spectral sequence
\[
E_2^{p,q} = H^p(T(H^q(x))) \Rightarrow H^{p+q}(T(x))
\]
degenerate at $E_2$.
\end{Prop}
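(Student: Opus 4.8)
I would prove the two implications separately; $(1)\Rightarrow(2)$ is formal, and the content is in $(2)\Rightarrow(1)$.

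For $(1)\Rightarrow(2)$: an exact functor commutes with shifts and with finite biproducts, so $x\cong\bigoplus_iH^i(x)[-i]$ gives $T(x)\cong\bigoplus_iT(H^i(x))[-i]$ for every exact $T$. The Grothendieck--Verdier spectral sequence is the one attached to the canonical truncation filtration $\tau_{\le\bullet}x$, hence is additive and functorial in $x$; evaluated on the split object above it is the direct sum of the spectral sequences of the summands $H^i(x)[-i]$, each concentrated in a single row $q=i$ and therefore already $E_2$-degenerate. A direct sum of $E_2$-degenerate spectral sequences degenerates at $E_2$, which is $(2)$.

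For $(2)\Rightarrow(1)$ the plan is to construct, for every $i$ with $H^i(x)\ne0$, a morphism $s_i\colon H^i(x)[-i]\to x$ with $H^i(s_i)=\mathrm{id}_{H^i(x)}$; then $s:=\sum_is_i\colon\bigoplus_iH^i(x)[-i]\to x$ (a finite sum, $x$ being bounded) induces the identity on every cohomology object and is hence an isomorphism in $D^b(A)$, which is $(1)$. To obtain $s_i$, I would apply $(2)$ to the exact functor $T=\mathrm{RHom}_{D^b(A)}(H^i(x)[-i],-)$, for which $H^p(T(y))=\mathrm{Hom}_{D^b(A)}(H^i(x)[-i],y[p])$ and thus $E_2^{p,q}=\mathrm{Hom}_{D^b(A)}(H^i(x),H^q(x)[p+i])$. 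Writing $[a,b]$ for the range of nonvanishing cohomology of $x$, in total degree $0$ the term $E_2^{-q,q}$ is nonzero only for $a\le q\le i$, so $q=i$ is its largest filtration index and $E_2^{-i,i}=\mathrm{Hom}_A(H^i(x),H^i(x))$. Since $\mathrm{Hom}_{D^b(A)}(H^i(x)[-i],\tau_{\ge i+1}x)=0$, the map $\mathrm{Hom}_{D^b(A)}(H^i(x)[-i],\tau_{\le i}x)\to\mathrm{Hom}_{D^b(A)}(H^i(x)[-i],x)$ is onto, so the abutment $\mathrm{Hom}_{D^b(A)}(H^i(x)[-i],x)$ in total degree $0$ surjects onto the top graded piece $E_\infty^{-i,i}$, the edge map being $f\mapsto H^i(f)$. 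Hypothesis $(2)$ forces $E_\infty^{-i,i}=E_2^{-i,i}$, so $\mathrm{id}_{H^i(x)}$ is hit; any preimage is the required $s_i$.

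The easy implication and the final assembly are routine. The step that needs care is the last paragraph: realizing $\mathrm{RHom}_{D^b(A)}(H^i(x)[-i],-)$ as an exact functor into a derived category of abelian groups (harmless in the situations considered here, and in general after embedding $A$ into a category with enough injectives), identifying the truncation spectral sequence of this functor together with its edge map at the top graded piece, and using $\mathrm{Hom}_{D^b(A)}(H^i(x)[-i],\tau_{\ge i+1}x)=0$ to ensure that graded piece really is a quotient of the whole abutment — so that $E_2$-degeneration becomes the surjectivity of $f\mapsto H^i(f)$ onto $\mathrm{Hom}_A(H^i(x),H^i(x))$. Once the spectral-sequence conventions are pinned down the argument is short; one could equally run the Verdier-dual argument splitting off the bottom cohomology via $\mathrm{RHom}_{D^b(A)}(-,H^i(x)[-i])$.
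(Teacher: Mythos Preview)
Your proof is correct and follows essentially the same route as the paper. For $(2)\Rightarrow(1)$ the paper applies the hypothesis to the functor $T(y)=\bigoplus_j\mathrm{Hom}(H^i(x),y[j])[-j]$, which is just your $\mathrm{RHom}(H^i(x)[-i],-)$ up to a shift in indexing, and then uses surjectivity of the edge map $\mathrm{Hom}(H^i(x),x[i])\to\mathrm{Hom}(H^i(x),H^i(x))$ to lift the identity; your extra care about why the relevant $E_\infty$ term is a quotient of the abutment and about realizing $\mathrm{RHom}$ as an exact functor are points the paper leaves implicit.
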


\begin{proof}
The condition (1) implies that 
$H^k(T(x)) \cong \bigoplus_{i+j=k} H^i(T(H^j(x)))$, hence (2).

We prove the converse (2) $\Rightarrow$ (1).
Let $T(y) = \bigoplus_j \text{Hom}(H^i(x),y[j])[-j] \in 
D(\mathbf{Z}\text{-mod})$ for a fixed $i$.
By assumption, the spectral sequence
\[
E_2^{p,q} = \text{Hom}(H^i(x), H^q(x)[p]) 
\Rightarrow \text{Hom}(H^i(x), x[p+q])
\]
degenerate at $E_2$.
In particular, the edge homomorphism 
$E_{\infty}^i \to E_2^{0,i}$ given by
\[
\text{Hom}(H^i(x), x[i]) \to \text{Hom}(H^i(x), H^i(x))
\]
is surjective.
Let $f_i: H^i(x)[-i] \to x$ be an arbitrary morphism which is 
mapped to the identity morphism of $H^i(x)[-i]$.
Then the sum $\bigoplus_i f_i: \bigoplus_i H^i(x)[-i] \to x$ is
a quasi-isomorphism.
\end{proof}

\begin{Thm}[\cite{Deligne-L}]\label{LD}
Let $x \in D^b(A)$ be an object of a bounded derived category 
of an abelian category $A$, $u \in \text{Hom}(x,x[2])$, and 
$n$ an integer.
Assume that the induced morphisms $u^i: H^{n-i}(x) \to H^{n+i}(x)$
are isomorphisms for all $i \ge 0$.
Then $x \cong \bigoplus_i H^i(x)[-i]$ in $D^b(A)$.
\end{Thm}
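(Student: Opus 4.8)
The plan is to deduce Theorem~\ref{LD} from the previous proposition by constructing, for an arbitrary exact functor $T: D^b(A) \to D(B)$, a Lefschetz-type operator on the Grothendieck--Verdier spectral sequence and invoking the hard Lefschetz formalism for spectral sequences. Concretely, the morphism $u \in \operatorname{Hom}(x,x[2])$ induces via $T$ a morphism $T(u): T(x) \to T(x)[2]$, and hence a degree $(2,0)$ (or after reindexing, $(0,2)$) operator $L$ on the spectral sequence $E_2^{p,q} = H^p(T(H^q(x))) \Rightarrow H^{p+q}(T(x))$. By functoriality $L$ acts on each page $E_r$ and commutes with the differentials $d_r$. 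The hypothesis that $u^i: H^{n-i}(x) \to H^{n+i}(x)$ is an isomorphism for all $i \ge 0$ says exactly that $L^i$ induces an isomorphism on the $E_2$-page from the columns $q = n-i$ to the columns $q = n+i$ (applying the exact functor $T$ preserves these isomorphisms).

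The key step is then a general lemma: if $(E_r, d_r)$ is a spectral sequence in an abelian category equipped with an operator $L$ of bidegree $(0,2)$ commuting with all $d_r$, such that on the $E_2$-page $L^i$ is an isomorphism from the $q = n-i$ columns onto the $q = n+i$ columns, then the spectral sequence degenerates at $E_2$. The proof of this lemma is the standard argument for ``hard Lefschetz implies degeneration'': one shows by induction that the differentials $d_r$ vanish for all $r \ge 2$. Indeed, the hard Lefschetz isomorphism gives a Lefschetz decomposition of each page into primitive parts $P^{p,q} = \ker(L^{n-q+1}: E_r^{p,q} \to E_r^{p, 2n-2q+2})$; since $d_r$ has bidegree $(r, 1-r)$ it lowers $q$ by $r-1 \ge 1$, and a direct bookkeeping with the $L$-action on primitive pieces forces $d_r = 0$ because $d_r$ would have to be compatible with isomorphisms relating degrees it cannot connect. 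Once $d_r = 0$ for all $r \ge 2$, the spectral sequence degenerates at $E_2$, and since $T$ was arbitrary, condition~(2) of the preceding proposition holds, hence condition~(1): $x \cong \bigoplus_i H^i(x)[-i]$.

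I would carry out the steps in this order: first, record that $T(u)$ gives a compatible operator $L$ on the Grothendieck--Verdier spectral sequence for every exact $T$; second, translate the hypothesis on $u^i$ into the hard-Lefschetz isomorphism statement on $E_2$; third, prove the purely formal degeneration lemma for spectral sequences with a hard-Lefschetz operator; fourth, conclude via the previous proposition. A mild subtlety worth being careful about is the choice of which variable ($p$ or $q$) carries the cohomological grading of $x$ in the spectral sequence $E_2^{p,q} = H^p(T(H^q(x)))$: here $q$ indexes $H^q(x)$, so $u$ acts by raising $q$, i.e.\ $L$ has bidegree $(0,2)$, and the Lefschetz isomorphisms are between columns; the differential $d_r$ of bidegree $(r,1-r)$ then moves between columns by $r-1$, which is exactly the setup where the classical argument applies.

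The main obstacle is the formal degeneration lemma in step three: making the primitive-decomposition argument work at the level of an abelian category (rather than for vector spaces) requires checking that $\ker$ and $\operatorname{im}$ of the powers of $L$ behave well and that the Lefschetz decomposition $E_r = \bigoplus L^j P$ genuinely holds on every page, which needs the isomorphism property to be propagated from $E_2$ to $E_r$ — this propagation is itself part of what one proves by the induction on $r$. Everything else (functoriality of the spectral sequence, preservation of isomorphisms under the exact functor $T$, and the final appeal to the criterion) is routine.
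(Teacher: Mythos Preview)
Your proposal is correct and is essentially the same as the paper's proof: reduce to the preceding proposition by showing, for every exact $T$, that the Grothendieck--Verdier spectral sequence degenerates at $E_2$ via the hard-Lefschetz/primitive-decomposition argument, using induction on $r$ and the commutation of $u$ with $d_r$. The only cosmetic difference is that the paper defines the primitive parts once on $H^q(x)$ itself (so that ${}_0E_2^{p,q}=H^p(T({}_0H^q(x)))$ and the Lefschetz decomposition, being split, is preserved by the additive functor $H^p\circ T$), which sidesteps the issue you flag of having to re-establish hard Lefschetz on each page.
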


\begin{proof}
We define primitive parts of $H^{n-i}(x)$ for $i \ge 0$ 
by 
\[
{}_0H^{n-i}(x) = 
\text{Ker}(u^{i+1}: H^{n-i}(x) \to H^{n+i+2}(x)).
\]
Then we have a Lefschetz decomposition
\[
\begin{split}
&H^{n-i}(x) \cong \bigoplus_{k \ge 0} u^k{}_0H^{n-i-2k}(x) \\
&H^{n+i}(x) \cong \bigoplus_{k \ge 0} u^{k+i}{}_0H^{n-i-2k}(x).
\end{split}
\]
Let $T: D^b(A) \to D(B)$ be an arbitrary exact functor, and
\[
E_2^{p,q} = H^p(T(H^q(x))) \Rightarrow H^{p+q}(T(x))
\]
be the spectral sequence.
We shall prove that the differentials 
$u_r^{p,q}: E_r^{p,q} \to E_r^{p+r,q-r+1}$ vanish for all 
$r \ge 2$ by induction on $r$.
Assume that we already have $E_2^{p,q} \cong E_r^{p,q}$.
Denote ${}_0E_r^{p,q} = {}_0E_2^{p,q} = H^p(T({}_0H^q(x)))$.
We have the following commutative diagram
\[
\begin{CD}
{}_0E_r^{p,n-i} @>{d_r^{p,n-i}}>> E_r^{p+r,n-i-r+1} \\
@V{u^{i+1}}VV @VV{u^{i+1}}V \\
E_r^{p,n+i+2} @>{d_r^{p,n+i+2}}>> E_r^{p+r,n+i-r+3}
\end{CD}
\]
The left vertical arrow vanishes while the right vertical arrow 
is a monomorphism since $r \ge 2$.
Therefore $d_r^{p,n-i}$ vanishes on ${}_0E_r^{p,n-i}$.
Since $u^kd_r^{p,n-i} = d_r^{p,n-i+2k}u^k$, we have $d_r^{p,q} = 0$
for all p,q.
\end{proof} 

%%%%%%%%%%%%%%%%%%%%%%%%%%%%%%%%%%%%%%%%%%%%%%%%%%%%%
\section{Hodge structure and Hodge complex}

We recall the definitions concerning Hodge structures and Hodge complexes.
The reference is \cite{Deligne-II}.

A {\em Hodge structure (HS)} of weight $n$ is a pair
$(H_{\mathbf{Z}}, F)$, where $H_{\mathbf{Z}}$ 
is a finitely generated $\mathbf{Z}$-module and $F$ is a
decreasing filtration on $H_{\mathbf{C}} = H_{\mathbf{Z}}
\otimes \mathbf{C}$ which induces a direct sum decomposition
\[
H_{\mathbf{C}} = \bigoplus_{p+q=n}
F^p(H_{\mathbf{C}}) \cap \overline{F^q(H_{\mathbf{C}})}.
\]
$F$ is called a {\em Hodge filtration}.
We set $H^{p,q} = F^p(H_{\mathbf{C}}) \cap \overline{F^q(H_{\mathbf{C}})}$.

It is said to be 
{\em polarizable} if there is a non-degenerate bilinear form
$Q: H_{\mathbf{R}} \times H_{\mathbf{R}} \to \mathbf{R}$
for $\mathbf{R} = H_{\mathbf{Z}} \otimes \mathbf{R}$
such that the following Hodge-Riemann index theorem holds:
\[
\begin{split}
&i^w(-1)^{q+1}Q(H^{p,q}, \bar H^{p,q}) >> 0 \\
&Q(H^{p,q}, \bar H^{p',q'}) = 0 \text{ if } p \ne p'
\end{split}
\]
for all $p+q=p'+q'=n$.

A {\em mixed Hodge structure (MHS)} is a triple consisting of a 
finitely generated $\mathbf{Z}$-module $H_{\mathbf{Z}}$, an increasing
filtration $W$ on $H_{\mathbf{Q}} = H_{\mathbf{Z}}
\otimes \mathbf{Q}$, and a
decreasing filtration $F$ on $H_{\mathbf{C}} = H_{\mathbf{Z}}
\otimes \mathbf{C}$ such that $(\text{Gr}_n^W(H_{\mathbf{Q}}),F)$ 
becomes a Hodge $\mathbf{Q}$-structure of weight $n$, a Hodge 
structure tensorized by $\mathbf{Q}$,
where $F$ denotes the induced filtration from $F$ by abuse of notation.

$W$ is called a {\em weight filtration}.
It is {\em graded polarzable} if each graded piece 
$\text{Gr}_n^W(H_{\mathbf{Q}})$ is individually polarizable.

\begin{Expl}
Let $X$ be a smooth projective variety of dimension $n$.
Then the cohomology group $H^k(X, \mathbf{Z})$ carries a Hodge structure
of weight $k$.
If we take its primitive part, then we obtain a polarizable Hodge structure 
as follows.

Let $L \in H^2(X, \mathbf{Z})$ 
be the cohomology class of an ample divisor.
For each integer $i \ge 0$, the cup product with $L^i$ yields
an isomorphism $L^i: H^{n-i}(X, \mathbf{Q}) \to H^{n+i}(X, \mathbf{Q})$.
The {\em primitive part} ${}_0H^{n-i}(X, \mathbf{Q}) 
\subset H^{n-i}(X, \mathbf{Q})$ is defined as the kernel 
of the homomorhism given by the cup product with $L^{i+1}$.
The {\em Lefschetz decomposition theorem} says that 
there is a direct sum decomposition
\[
H^k(X, \mathbf{Q}) = \bigoplus_{i \ge 0} L^i{}_0H^{k-2i}(X, \mathbf{Q})
\]
for $k \le n$.
The Hodge-Riemann index theorem says that the primitive part 
${}_0H^k(X, \mathbf{Q})$ carries a polarizable 
$\mathbf{Q}$-Hodge structure of weight $k$.
The polarization comes from the cup product: 
\[
Q(u,v) = (u \cup v \cup L^{n-k})[X]
\]
for $u,v \in {}_0H^k(X, \mathbf{Q})$.
\end{Expl}

A {\em Hodge complex (HC)} of weight $n$ is a triple 
$H = (H_{\mathbf{Z}}, H_{\mathbf{C}}, F)$, where 
$H_{\mathbf{Z}} \in D^+(\mathbf{Z})$ is a left bounded complex of 
abelian groups whose cohomology groups $H^k(H_{\mathbf{Z}})$ 
are of finite type, and 
$H_{\mathbf{C}} \in D^+F(\mathbf{C})$ is a left bounded complex of 
$\mathbf{C}$-modules
with a decreasing filtration $F$ such that the following conditions 
are satisfied:

(1) There is an isomorphism
$H_{\mathbf{Z}} \otimes \mathbf{C} \cong H_{\mathbf{C}}$.

(2) The spectral sequence associated to the filtration $F$
\[
{}_FE_1^{p,q} = H^{p+q}(\text{Gr}_F^p(H_{\mathbf{C}})) \Rightarrow
H^{p+q}(H_{\mathbf{C}})
\]
degenerates at $E_1$.

(3) The cohomology groups $H^k(H_{\mathbf{Z}})$ 
with the filtrations on the $H^k(H_{\mathbf{C}})$ induced from $F$ 
give Hodge structures of weight $n+k$
for all $k$.

If $H$ is a Hodge complex of weight $n$, then
the shifted complex $H[m]$ is a Hodge complex of weight $n+m$.

A {\em mixed Hodge complex (MHC)} is a quadruple 
$H = (H_{\mathbf{Z}}, H_{\mathbf{C}}, W, F)$, where 
$H_{\mathbf{Z}} \in D^+(\mathbf{Z})$ is a left bounded complex of 
abelian groups with an increasing filtration $W$ on 
$H_{\mathbf{Q}} = H_{\mathbf{Z}} \otimes \mathbf{Q}$, 
and $H_{\mathbf{C}} \in D^+F_2(\mathbf{C})$ is a left bounded complex of 
$\mathbf{C}$-modules
with an increasing filtration $W$ and a decreasing filtration $F$ 
such that the following conditions 
are satisfied:

(1) There is an isomorphism
$(H_{\mathbf{Q}},W) \otimes \mathbf{C} \cong (H_{\mathbf{C}},W)$.

(2) $(\text{Gr}_n^W(H_{\mathbf{Q}}),F)$ is a Hodge $\mathbf{Q}$-complex 
of weight $n$, a Hodge complex tensorized by $\mathbf{Q}$,
where $F$ denotes the induced filtration from $F$ by abuse of notation.

Then it follows that the triple $(H^k(H_{\mathbf{Z}}),W[k],F)$ is a 
mixed Hodge structure.

A {\em cohomological Hodge complex (CoHC)} of weight $n$ on a 
topological space $X$ is a triple 
$H = (H_{\mathbf{Z}}, H_{\mathbf{C}}, F)$, where 
$H_{\mathbf{Z}} \in D^+(\mathbf{Z}_X)$ is a left bounded complex of 
constructible sheaves of abelian groups on $X$, and 
$H_{\mathbf{C}} \in D^+F(X)$ is a left bounded complex of sheaves of
$\mathbf{C}$-modules on $X$ with a decreasing filtration $F$ 
such that the following conditions 
are satisfied:

(1) There is an isomorphism
$H_{\mathbf{Z}} \otimes \mathbf{C} \cong H_{\mathbf{C}}$.

(2) The direct image complex $R\Gamma(X,H) = 
(R\Gamma(X,H_{\mathbf{Z}}), R\Gamma(X,H_{\mathbf{C}},F))$ is a 
Hodge complex of weight $n$.

A {\em cohomological mixed Hodge complex (CoMHC)} on a 
topological space $X$ is a quadruple 
$H = (H_{\mathbf{Z}}, H_{\mathbf{C}}, W, F)$, where 
$H_{\mathbf{Z}} \in D^+(\mathbf{Z}_X)$ is a left bounded complex of 
constructible sheaves of abelian groups on $X$ 
with an increasing filtration $W$ on 
$H_{\mathbf{Q}} = H_{\mathbf{Z}} \otimes \mathbf{Q}$, 
and $H_{\mathbf{C}} \in D^+F_2(X)$ is a left bounded complex of 
sheaves of $\mathbf{C}$-modules on $X$ with an increasing filtration $W$ and 
a decreasing filtration $F$ 
such that the following conditions 
are satisfied:

(1) There is an isomorphism
$(H_{\mathbf{Q}},W) \otimes \mathbf{C} \cong (H_{\mathbf{C}},W)$.

(2) $(\text{Gr}_n^W(H_{\mathbf{Q}}),\text{Gr}_n^W(H_{\mathbf{C}}),F)$ 
is a cohomological 
Hodge $\mathbf{Q}$-complex weight $n$ on $X$.

In this case, it follows that 
the direct image complex 
\[
R\Gamma(X,H) = 
(R\Gamma(X,H_{\mathbf{Z}}), R\Gamma(X,H_{\mathbf{C}}),W,F)
\]
becomes a mixed Hodge complex.

We summarize the above definitions as follows:
\[
\begin{CD}
\text{CoMHC} @>{R\Gamma}>> \text{MHC} @>{H^*}>> \text{MHS} \\
@V{\text{Gr}^W}VV @V{\text{Gr}^W}VV @V{\text{Gr}^W}VV \\
\text{CoHC} @>{R\Gamma}>> \text{HC} @>{H^*}>> \text{HS}
\end{CD}
\]

\begin{Expl}
Let $X$ be a smooth projective variety, $B$ a normal crossing divisor, and 
$i: X \setminus B \to X$ the open immersion.
Then the direct image sheaf $H = Ri_*\mathbf{Z}_{X \setminus B}$
carries a structure of a cohomological mixed Hodge complex.
We can also write $H = R\rho_*\mathbf{Z}_{X^{\log}}$ for the real oriented 
blow-up $\rho: X^{\log} \to X$.

The weight filtration on $H_{\mathbf{Q}}$ on the $\mathbf{Q}$-level 
is given by the 
canonical filtration
\[
W_q(H_{\mathbf{Q}}) = \tau_{\le q}(Ri_*\mathbf{Z}_{X \setminus B})
\]
where the truncation $\tau_{\le q}(K^{\bullet})$ of a 
complex $K^{\bullet}$ is defined by
\[
\tau_{\le q}(K^{\bullet})_i = 
\begin{cases} K_i &\text{ if } i < q \\
\text{Ker}(K_q \to K_{q+1}) &\text{ if } i=q \\
0 &\text{ if } i > q.
\end{cases}
\]
The $\mathbf{C}$-level complex is given by the sheaves of 
logarithmic differential forms
$H_{\mathbf{C}} = \Omega^{\bullet}_X(\log B)$.
We have a quasi isomorphism
\[
Ri_*\mathbf{C}_{X \setminus B} \cong \Omega^{\bullet}_X(\log B)
\]
by the Poincar\'e lemma.
The weight filtration $W$ on the $\mathbf{C}$-level 
is given by the order of log poles;
$W_q(H_{\mathbf{C}})$ is a complex consisting of differential forms
whose log poles have order at most $q$.
The residue homomorphisms give an isomorphism of filtered complexes
\[
(H_{\mathbf{Q}},W) \otimes \mathbf{C} \cong (H_{\mathbf{C}},W).
\]
The Hodge filtration is given by the stupid filtration:
\[
F^p(H_{\mathbf{C}}) = \Omega^{\ge p}_X(\log B).
\]

For each non-negative integer $t$, let $B^{[t]}$ be the disjoint union of the 
irreducible components of intersections of $t$ different irreducible 
components of $B$.
We regard $B^{[0]} = X$, and $B^{[1]}$ is the normalization of $B$.
$B^{[t]}$ is $\dim X - t$ equi-dimensional.
The following isomorphisms given by the residue homomorphisms are fundamental:
\[
\begin{split}
&\text{Gr}_q^W(\Omega^{\bullet}_X(\log B)) \cong 
\Omega^{\bullet}_{B^{[q]}}[-q] \\
&F^p(\text{Gr}_q^W(\Omega^{\bullet}_X(\log B))) \cong 
\Omega^{\ge p-q}_{B^{[q]}}[-q].
\end{split}
\]
We denote by $F(-q)$ the shifted filtration defined by
$F(-q)^p=F^{p-q}$.
Then the triple
\[
(\mathbf{Z}_{B^{[q]}}[-q], \Omega^{\bullet}_{B^{[q]}}[-q], F(-q)[-q])
\]
is a cohomological Hodge complex of weight $-q+2q=q$, where the shift of $F$
is counted twice because the opposite filtration $\bar F$ is also shifted.
Therefore the quadruple
\[
(Ri_*\mathbf{Z}_{X \setminus B}, \Omega^{\bullet}_X(\log B), W, F)
\]
is a cohomological mixed Hodge complex.
\end{Expl}

\begin{Thm}[\cite{Deligne-II}]
Let $H$ be a cohomological mixed Hodge complex on a topological space $X$.
Then the following hold:

(1) The spectral sequence associated to the Hodge filtration
\[
{}_FE_1^{p,q} = H^{p+q}(\text{Gr}_F^p(H_{\mathbf{C}})) \Rightarrow
H^{p+q}(H_{\mathbf{C}})
\]
degenerates at $E_1$.

(2) The spectral sequence associated to the weight filtration
\[
{}_WE_1^{p,q} = H^{p+q}(\text{Gr}^W_{-p}(H_{\mathbf{Q}})) \Rightarrow
H^{p+q}(H_{\mathbf{Q}})
\]
degenerates at $E_2$.
\end{Thm}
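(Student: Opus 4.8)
The plan is to deduce both degenerations from the abstract criterion of Proposition~\ref{LD} (Deligne's decomposition lemma) applied to the graded pieces, together with the formal properties of a cohomological mixed Hodge complex. Recall the structure: after taking $R\Gamma(X,-)$, the quadruple $H$ becomes a mixed Hodge complex, so it suffices to prove the two statements for an abstract mixed Hodge complex $(H_{\mathbf{Z}},H_{\mathbf{C}},W,F)$, since the spectral sequences in the theorem are exactly those of $R\Gamma(X,H)$. For part~(1), the point is that the $E_1$-degeneration of the Hodge spectral sequence is stable under taking graded pieces: by the defining axiom (2) of an MHC, each $(\mathrm{Gr}_n^W H_{\mathbf{Q}},F)$ is a Hodge $\mathbf{Q}$-complex, hence its Hodge spectral sequence degenerates at $E_1$; then one runs an induction on the length of the weight filtration, using the long exact sequences of the triples $0 \to W_{n-1} \to W_n \to \mathrm{Gr}_n^W \to 0$ and a dimension count (strictness), exactly as in Deligne's original argument in \cite{Deligne-II}. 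This is the classical part and I would treat it as essentially formal given the axioms.

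The substantive part is (2), the $E_2$-degeneration of the weight spectral sequence. Here I would invoke the Hodge-theoretic structure on the terms. The key observation is that the $E_1$-term ${}_WE_1^{p,q} = H^{p+q}(\mathrm{Gr}_{-p}^W H_{\mathbf{Q}})$ carries a pure Hodge structure of weight $q$: indeed $(\mathrm{Gr}_{-p}^W H_{\mathbf{Q}},F)$ is a Hodge complex of weight $-p$ (by axiom (2) of MHC, with the appropriate shift), so its cohomology in degree $p+q$ is a Hodge structure of weight $(-p)+(p+q)=q$. The differential $d_1: {}_WE_1^{p,q} \to {}_WE_1^{p+1,q}$ is then a morphism of Hodge structures of the same weight $q$, so its kernel, image, and cokernel are again pure Hodge structures, and ${}_WE_2^{p,q}$ inherits a pure Hodge structure of weight $q$. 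The differential $d_2: {}_WE_2^{p,q} \to {}_WE_2^{p+2,q-1}$ is now a map between pure Hodge structures of \emph{different} weights $q$ and $q-1$; but any morphism of Hodge structures strictly respects weights, and a morphism between pure Hodge structures of different weights must vanish. Hence $d_2=0$. For $r \ge 3$ one argues similarly, or simply observes that the spectral sequence of a first-quadrant-type filtered complex, once $d_2=0$, and by the same weight-degree bookkeeping ($d_r$ changes the would-be weight by $r-1 \ge 2$), all higher differentials vanish for weight reasons as well, so in fact it degenerates at $E_2$.

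The main obstacle is bookkeeping: one must verify carefully that the Hodge structures on the $E_1$-terms are compatible with the $d_1$ differentials (i.e.\ that $d_1$ is genuinely a morphism of Hodge structures and not merely of the underlying groups), which requires knowing that the connecting maps in the long exact sequences coming from $0 \to \mathrm{Gr}_{-p-1}^W \to W_{-p}/W_{-p-2} \to \mathrm{Gr}_{-p}^W \to 0$ are morphisms of Hodge complexes — this is where axiom (1) of an MHC (the filtered comparison isomorphism $(H_{\mathbf{Q}},W)\otimes\mathbf{C}\cong (H_{\mathbf{C}},W)$, together with the $F$-structure on each graded piece) does the real work, and one must check it threads through the derived-category constructions cleanly. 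Once that compatibility is established, the vanishing of $d_r$ for $r \ge 2$ is immediate from the incompatibility of nonzero Hodge morphisms with weight shifts. I would organize the writeup as: (a) reduce to the MHC statement via $R\Gamma$; (b) prove (1) by induction on weight length using strictness; (c) equip ${}_WE_1$ and ${}_WE_2$ with pure Hodge structures and observe $d_1$ is weight-preserving; (d) conclude $d_r=0$ for $r\ge 2$ by weight reasons.
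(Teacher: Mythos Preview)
The paper does not give its own proof of this theorem; it simply quotes the result from \cite{Deligne-II} and uses it as a black box in later sections. So there is nothing in the paper to compare your argument against.

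That said, your outline is essentially Deligne's own proof in \cite{Deligne-II}, \S2.3, and the strategy is correct: reduce via $R\Gamma$ to an abstract mixed Hodge complex, get $E_1$-degeneration of the Hodge spectral sequence by induction on the length of $W$ using strictness on each $\mathrm{Gr}^W$, and kill $d_r$ for $r\ge 2$ in the weight spectral sequence because $d_r$ would be a morphism of pure Hodge structures of weights $q$ and $q-r+1$. Two remarks. First, your opening line invokes Theorem~\ref{LD}, but you never actually use it; the argument you give is the weight-incompatibility argument, not the Lefschetz-type criterion, so drop that reference. Second, and more important, the step you label ``bookkeeping'' is in fact the heart of the matter: to know that $d_r$ (for $r\ge 2$) is a morphism of Hodge structures, and not just of the underlying $\mathbf{Q}$-vector spaces, you must show that the three a priori different filtrations that $F$ induces on each ${}_WE_r^{p,q}$ coincide. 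This is Deligne's \emph{lemme des deux filtrations} (loc.\ cit.\ 1.3.16--1.3.17, or the three-filtrations version in Hodge III), and it is not automatic from the axioms you list; it requires an inductive argument using the $E_1$-degeneration on each $\mathrm{Gr}^W$. Once that lemma is in place, your weight-counting argument for the vanishing of $d_r$, $r\ge 2$, is exactly right.
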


%%%%%%%%%%%%%%%%%%%%%%%%%%%%%%%%%%%%%%%%%%%%%%%%%%%%%
\section{Semipositivity and semi-simplicity}

Let $H$ be a locally free sheaf on a complex manifold $Y$
with a $C^{\infty}$ hermitian metric $h_H$.
Then there is a connection $\nabla_H: H \to H \otimes \Omega^1_Y$
which is compatible with the holomorphic structure of $H$ and the metric $h_H$.
Let $\Theta_H$ be the curvature form.
It is a $\text{Hom}(H,H)$ valued $C^{\infty}$ differential forms of 
type $(1,1)$ given by the formula
\[
\bar{\partial} \partial h_H(u,v) 
= -h_H(\nabla_H u, \nabla_H v) + h_H(\Theta_H u, v)
\]
where $u,v$ are holomorphic local sections of $H$ and the 
equality holds as $C^{\infty}$ differential form of 
type $(1,1)$.
The first Chern class $c_1(H)$ is given by the $C^{\infty}$ differential form
$\frac i{2\pi} \text{Tr}(\Theta)$.

Let $F$ be a locally free subsheaf of $H$ with an injection $i: F \to H$ and 
a surjection $p: H \to G$ to the quotient sheaf $G = H/F$.
The hermitian metric $h_H$ induces hermitian metrics $h_F$ and 
$h_G$ on $F$ and $G$ respectively.
We can compare the curvature forms of these metrics:

\begin{Prop}\label{secondff}
Define the second fundamental form by 
\[
b = p \circ \nabla_H \circ i \in \text{Hom}(F,G) \otimes \Omega^1_Y
\]
and let $u,v$ be local holomorphic sections of $F$.
Then 
\[
h_F(\Theta_F u,v) = h_H(\Theta_H u,v) - h_G(b(u),b(v)).
\]
\end{Prop}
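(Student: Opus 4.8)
The plan is to reduce the statement to the classical Gauss equation for a holomorphic subbundle, using the $C^{\infty}$ orthogonal splitting supplied by $h_H$. First, the metric $h_H$ gives a $C^{\infty}$ (in general non-holomorphic) direct sum decomposition $H = F \oplus F^{\perp}$, and the composition $F^{\perp} \hookrightarrow H \xrightarrow{p} G$ is a $C^{\infty}$ isomorphism which is an isometry onto $(G, h_G)$; I will use it to identify $(F^{\perp}, h_H|_{F^{\perp}})$ with $(G, h_G)$. With respect to $H = F \oplus F^{\perp}$, write the Chern connection $\nabla_H$ in block form
\[
\nabla_H = \begin{pmatrix} D & \gamma \\ b & D' \end{pmatrix},
\]
so that $b = p \circ \nabla_H \circ i$ is exactly the second fundamental form and $\gamma$ is the $(F^{\perp} \to F)$-component.

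Next I record three facts, each proved by the usual short computation. (a) Since $F$ is a \emph{holomorphic} subbundle, the $\bar\partial$-operator of $H$ (the $(0,1)$-part of $\nabla_H$) preserves $F$; hence $b$ is of type $(1,0)$, $\gamma$ is of type $(0,1)$, and the diagonal blocks $D$ and $D'$ are compatible with the holomorphic structures of $F$ and of $G \cong F^{\perp}$. (b) Since $\nabla_H$ is compatible with $h_H$, differentiating the identity $h_H(s,t) = 0$ for local sections $s$ of $F$ and $t$ of $F^{\perp}$ shows that $\gamma = -b^{*}$, where $b^{*}$ is the adjoint of $b$ with respect to $h_F$ and $h_G$, and that $D$, $D'$ are metric connections. (c) Combining (a) and (b), $D$ and $D'$ are the Chern connections of $(F,h_F)$ and $(G,h_G)$; that is, $D = \nabla_F$ and $D' = \nabla_G$.

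Now I compute the curvature: $\Theta_H = \nabla_H \circ \nabla_H$, and squaring the block matrix (keeping the Leibniz sign for form-valued operators) shows that the $(F,F)$-block of $\Theta_H$ equals $\Theta_F - b^{*} \wedge b$. Therefore
\[
\Theta_F = \mathrm{pr}_F \circ \Theta_H \circ i + b^{*} \wedge b.
\]
Pairing this with holomorphic local sections $u,v$ of $F$ and using $v \in F$ to replace $\mathrm{pr}_F \circ \Theta_H \circ i$ by $\Theta_H$ in the resulting metric pairing, we obtain $h_F(\Theta_F u, v) = h_H(\Theta_H u, v) + h_F((b^{*} \wedge b)u, v)$; finally, by the definition of the adjoint together with the wedge on the form parts, $h_F((b^{*} \wedge b)u, v) = - h_G(b(u), b(v))$, which yields the asserted identity.

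The main obstacle is entirely bookkeeping: fixing the signs in the last step, i.e. the interaction of the wedge product with the adjoint $b^{*}$, the type decomposition forced by the holomorphicity of $F$, and the sign convention implicit in $h_G(\cdot,\cdot)$ for $G$-valued $1$-forms (the same convention under which the curvature formula for $\Theta_H$ in the statement is written). Conceptually the content is just the classical fact that a holomorphic subbundle has curvature no larger, in the sense of Griffiths, than the ambient bundle. As a cross-check one may alternatively pick a holomorphic frame of $H$ adapted to $F$, write $h_H$ as a block matrix with top-left block $h_F$, express the connection via $\theta_H = (\partial h_H)\, h_H^{-1}$, and compute with block inversion; this is elementary but more tedious, so I would use it only to confirm the signs.
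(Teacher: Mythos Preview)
Your argument is the standard derivation of the Gauss equation for a holomorphic subbundle (orthogonal $C^\infty$ splitting, block decomposition of the Chern connection, type considerations forcing $b$ to be $(1,0)$ and $\gamma=-b^*$ to be $(0,1)$, then squaring), and it is correct; the sign bookkeeping you flag is the only delicate point, and you have handled it consistently with the paper's curvature convention $\bar\partial\partial h_H(u,v) = -h_H(\nabla_H u,\nabla_H v) + h_H(\Theta_H u,v)$.

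There is nothing to compare: the paper's own proof of this proposition is simply ``Omitted'', treating it as a well-known differential-geometric fact. Your write-up therefore supplies strictly more than the paper does.
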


\begin{proof}
Omitted.
\end{proof}

A {\em variation of mixed Hodge structures (VMHS)} over a
complex manifold $Y$ consists of a locally constant sheaf
$H_{\mathbf{Z}}$, an increasing filtration $W$ by locally constant
subsheaves on a locally constant sheaf $H_{\mathbf{Q}}
= H_{\mathbf{Z}} \otimes \mathbf{Q}$, and a decreasing filtration $F$ 
by locally free subsheaves on a locally free sheaf 
$H_{\mathbf{C}} = H_{\mathbf{Z}} \otimes \mathcal{O}_Y$ 
which satisfy the following conditions:

(1) For each $y \in Y$, the data induced on the fiber 
$(H_{\mathbf{Z},y},(H_{\mathbf{Q},y},W),
(H_{\mathbf{C}} \otimes \mathbf{C}_y,W,F))$ 
is a mixed Hodge structure.  

(2) The connection, called the Gauss-Manin connection
\[
\nabla: H_{\mathbf{C}} \to H_{\mathbf{C}} \otimes \Omega^1_Y
\]
for which sections of $H_{\mathbf{Z}}$ are flat, satisfies the
Griffiths transversality:
\[
\nabla: F^p(H_{\mathbf{C}}) \to F^{p-1}(H_{\mathbf{C}}) 
\otimes \Omega^1_Y 
\]
for all $p$.

\begin{Expl}
Let $f: (X,B) \to (Y,C)$ be a well prepared 
toroidal morphism, and $n$ an integer.
We denote by $f^o: X \setminus f^{-1}(C) \to Y \setminus C$ the restriction.
Let
\[
\begin{split}
&\Omega^1_{X/Y}(\log) = \Omega^1_X(\log B)/f^*\Omega^1_Y(\log C) \\
&\Omega^p_{X/Y}(\log) = \bigwedge^p \Omega^1_{X/Y}(\log).
\end{split}
\]
Then $R^nf^o_*\mathbf{Z}_{X \setminus f^{-1}(C)}$ carries a
structure of VMHS, and
the associated Hodge to de Rham spectral sequence
\[
E_1^{p,q} = R^qf^o_*\Omega_{X/Y}^p(\log) \Rightarrow
R^{p+q}f^o_*(f^o)^{-1}\mathcal{O}_{Y \setminus C}
\]
degenerates at $E_1$ (\cite{Deligne-II}), and gives the 
Hodge filtration.
\end{Expl}

If $W_q(H_{\mathbf{Q}}) = H_{\mathbf{Q}}$ and 
$W_{q-1}(H_{\mathbf{Q}}) = 0$, then our data
is called a {\em variation of Hodge structures (VHS) of
weight $q$}.

A VHS is said to be {\em polarizable} if there is a 
locally constant symmetric bilinear form 
$Q: H_{\mathbf{R}} \times H_{\mathbf{R}} \to \mathbf{R}_Y$ 
which induces polarizations of Hodge structures on fibers.
A VMHS is said to be {\em graded polarizable} if 
each graded pieces $\text{Gr}^W_q(H_{\mathbf{Q}})$ 
is a polarizable variation of Hodge structures.

\begin{Thm}[\cite{Griffiths}]
Let $(H_{\mathbf{Z}}, F, Q)$ be a polarized variation of 
Hodge structures on a complex manifold $Y$.
Assume that $F^m(H_{\mathbf{C}}) \ne 0$ and 
$F^{m+1}(H_{\mathbf{C}}) = 0$.
Then the Gauss-Manin connection induces a connection on $F^m(H_{\mathbf{C}})$ 
whose curvature is positive semi-definite. 
\end{Thm}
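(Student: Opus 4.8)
The plan is to exploit the fact that the bottom piece $F^m(H_{\mathbf{C}})$ of the Hodge filtration is automatically preserved by the Gauss--Manin connection—not just up to a shift—because of Griffiths transversality together with the vanishing $F^{m+1}(H_{\mathbf{C}}) = 0$. Indeed, transversality gives $\nabla: F^m \to F^{m-1} \otimes \Omega^1_Y$; but to see that $\nabla$ restricts to a genuine holomorphic connection on $F^m$, one observes that $F^m$ is itself a holomorphic subbundle of $H_{\mathbf{C}}$ and the $(0,1)$-part of $\nabla$ annihilates holomorphic sections, so the issue is only whether the $(1,0)$-part maps $F^m$ into $F^m \otimes \Omega^1_Y$. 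This is \emph{not} automatic from transversality alone; what saves us is that we will not claim $\nabla$ itself preserves $F^m$, but rather compare curvatures via the second fundamental form as in Proposition~\ref{secondff}.

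More precisely, the key step is to put $F = F^m(H_{\mathbf{C}})$, $H = H_{\mathbf{C}}$, and $G = H/F$, equipped with the Hodge metric $h_H$ coming from the polarization $Q$ (up to the usual sign $i^{p-q}$ on $H^{p,q}$, which makes $h_H$ positive definite). Then by Proposition~\ref{secondff},
\[
h_F(\Theta_F u, v) = h_H(\Theta_H u, v) - h_G(b(u), b(v)),
\]
where $b = p \circ \nabla_H \circ i \in \mathrm{Hom}(F,G) \otimes \Omega^1_Y$ is the second fundamental form. The plan is to show that the right-hand side is manifestly positive semi-definite as a Hermitian form in $u$ (with values in $(1,1)$-forms): first, $h_H(\Theta_H u, v) = 0$ identically, because a polarized variation of Hodge structures is, via the period map to the classifying space, pulled back from a homogeneous bundle whose Hodge metric on the total flat bundle $H_{\mathbf{C}} = H_{\mathbf{Z}} \otimes \mathcal{O}_Y$ has vanishing curvature—$H_{\mathbf{Z}}$ is locally constant, so $\Theta_H = 0$. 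Wait: the Hodge metric on $H_{\mathbf{C}}$ is \emph{not} flat in general; what is true is that the flat connection (Gauss--Manin) is not the metric connection. So instead I would argue directly: decompose $\nabla_H = \nabla' + \nabla''$ into $(1,0)$ and $(0,1)$ parts; the metric connection differs from $\nabla_H$, but the curvature $\Theta_H$ of the \emph{metric} connection can be computed, and the point is that on the bottom piece the correction term $-h_G(b(u),b(v))$ dominates.

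Let me restate the plan cleanly. The curvature of $F^m$ with its induced metric and \emph{metric} connection is, by the standard Gauss--Codazzi formula, $\Theta_F = \Theta_H|_F - b^* \wedge b$ (Proposition~\ref{secondff} is exactly this in Hermitian form). For the Hodge bundle, one knows classically (Griffiths) that $\Theta_H$, the curvature of the \emph{Gauss--Manin-compatible} Hodge metric on the whole bundle, vanishes is false—rather, the relevant computation shows $h_H(\Theta_H u, u) \geq 0$ for $u \in F^m$ is what one must establish, and then the $-h_G(b(u),b(u))$ term has the \emph{opposite} sign for a sub-bundle... so I need to be careful about which is a sub- and which a quotient. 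The correct framing: $F^m$ is the \emph{last} nonzero step, so it behaves like a sub-bundle of the flat bundle on which the metric connection's curvature is controlled by the second fundamental form \emph{out of} $F^m$; and the sign works out because $b$ lands in $\mathrm{Hom}(F^m, H/F^m) \otimes \Omega^1_Y$ and enters with a minus sign, while $\Theta_H|_{F^m}$ contributes non-negatively since the full bundle $H_{\mathbf{C}}$ carries a flat structure making its ``curvature'' (in the sense relevant here) zero. Concretely: since $H_{\mathbf{Z}}$ is locally constant, fix a flat frame; the Hodge metric $h_H$ in this frame is a positive Hermitian matrix $h$, and $\Theta_H = \bar\partial(h^{-1}\partial h)$; this need not vanish, but the identity of Proposition~\ref{secondff} combined with the fact that $F^m \subset F^{m-1} = H_{\mathbf{C}}$ forces... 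The hard part—and this is where I expect the real obstacle—is correctly matching signs: one must verify that in the Hodge-theoretic normalization of the metric (the factors $i^{p-q}(-1)^{\cdots}$ in the Hodge--Riemann relations), the term $-h_G(b(u),b(v))$ is $\geq 0$, i.e.\ that the second fundamental form contributes with the ``good'' sign because $F^m$ sits at the positive end of the Hodge filtration where the polarization form has a definite sign. I would handle this by writing out $\nabla$ on a local frame adapted to the Hodge decomposition, computing $b$ explicitly as (essentially) the Kodaira--Spencer-type map $F^m \to F^{m-1}/F^m \otimes \Omega^1_Y$, and tracking the sign of the Hodge--Riemann bilinear form on $H^{m-1, n-m+1}$ relative to $H^{m, n-m}$; the alternation of signs in the Hodge--Riemann relations is precisely what converts the \emph{a priori} indefinite $\Theta_H$ into a semi-positive $\Theta_F$ on the bottom piece.

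In summary: (i) set up $F = F^m(H_{\mathbf{C}}) \hookrightarrow H = H_{\mathbf{C}} \twoheadrightarrow G$ with Hodge metrics; (ii) apply Proposition~\ref{secondff} to get $h_F(\Theta_F u, v) = h_H(\Theta_H u, v) - h_G(b(u), b(v))$; (iii) identify $b$ with the Griffiths transversality map $F^m \to (F^{m-1}/F^m) \otimes \Omega^1_Y$; (iv) show the combination on the right is positive semi-definite by the Hodge--Riemann sign alternation—this being the crux. The remaining points (that $F^m$ is a holomorphic sub-bundle, that the induced connection exists, that everything is smooth) are formal given the definition of VHS and the results already recalled. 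The main obstacle, again, is the sign bookkeeping in step (iv); everything else is routine once that is pinned down.
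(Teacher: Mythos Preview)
Your overall plan---apply Proposition~\ref{secondff} to the inclusion $F^m(H_{\mathbf{C}}) \hookrightarrow H_{\mathbf{C}}$, identify the second fundamental form $b$ with the Griffiths transversality map, and extract positivity from the Hodge--Riemann sign alternation---is exactly the paper's approach. But you have talked yourself into a tangle at the central step, and the paper resolves it more cleanly than you anticipate.

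The point you are missing: do \emph{not} use the positive-definite Hodge metric with the $i^{p-q}$ sign corrections. Use instead the indefinite sesquilinear form $h_H(u,v) = Q(u,\bar v)$ directly. Since the polarization $Q$ is flat (it pairs flat sections to constants), the Gauss--Manin connection \emph{is} the unique connection compatible with both the holomorphic structure and this $h_H$, and therefore $\Theta_H = 0$ on the nose. This dissolves your worry that ``the Hodge metric on $H_{\mathbf{C}}$ is not flat in general''---that worry is correct for the positive-definite Hodge metric, but irrelevant once you work with $Q$ itself. (The identity of Proposition~\ref{secondff} is a formal Chern--Gauss--Codazzi computation that holds for any non-degenerate sesquilinear form, not only positive-definite ones; the paper applies it in this generality without comment.)

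With $\Theta_H = 0$, Proposition~\ref{secondff} reads simply $Q(\Theta_F u, \bar u) = - Q(b(u), \overline{b(u)})$. Griffiths transversality forces $b(u) \in \text{Gr}^{m-1}_F(H_{\mathbf{C}}) \otimes \Omega^1_Y$, and the Hodge--Riemann relations say that $Q(\,\cdot\,,\bar\cdot\,)$ has \emph{opposite} signs on $F^m$ and on $\text{Gr}^{m-1}_F$. Hence $-Q(b(u),\overline{b(u)})$ has the \emph{same} sign as $Q(u,\bar u)$ on $F^m$, and $\Theta_F$ is semi-positive with respect to the induced (now genuinely definite) metric on $F^m$. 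That is the entire argument; the ``hard part'' you flag in step (iv) is a one-line observation once the indefinite form is chosen, and the contribution of $\Theta_H$ that you struggle to control is simply zero.
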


\begin{proof}
Let $i: F^m(H_{\mathbf{C}}) \to H_{\mathbf{C}}$ and 
$p: H_{\mathbf{C}} \to H_{\mathbf{C}}/F^m(H_{\mathbf{C}})$ be
the natural homomorphisms.
We introduce a hermitian metric $h_H$ on $H_{\mathbf{C}}$
by $h_H(u,v) = Q(u,\bar v)$.
The {\em Gauss-Manin connection} $\nabla_H$ is flat, and $\Theta_H = 0$.

Let $h_F$ and $h_G$ be the induced hermitian metrics on the subsheaf 
$F^m(H_{\mathbf{C}})$ and the quotient sheaf 
$H_{\mathbf{C}}/F^m(H_{\mathbf{C}})$ respectively.
We calculate the curvature $\Theta_F$ using the second fundamental form
$b=p \circ \nabla_H \circ i$.
By the Griffiths transversality, the image of $b$ lies in
$\text{Gr}^W_{m-1}(H_{\mathbf{C}}) \otimes \Omega_Y^1$.

For holomorphic sections $u,v$ of $F^m(H_{\mathbf{C}})$, we have
\[
Q(\Theta_Fu,\bar v) = 
Q(\Theta_Hu,\bar v) - Q(b(u),\overline{b(v)})
\]
by Proposition~\ref{secondff}.
Since the non-degenerate bilinear form $Q$ has opposite signs
on $\text{Gr}_m^W(H_{\mathbf{C}})$ and 
$\text{Gr}_{m-1}^W(H_{\mathbf{C}})$, we obtain the
desired semipositivity.
\end{proof}

The following semi-simplicity theorem is very useful:

\begin{Thm}[\cite{Deligne-II}]\label{SS}
Let $Y$ be a complex manifold.
Then the category of polarized variations of Hodge structures on $Y$ is 
semi-simple in the sense that arbitrary injective homomorphism splits
over $\mathbf{Q}$.
\end{Thm}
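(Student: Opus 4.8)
The plan is to reduce the statement to the classical rigidity and polarization properties of variations of Hodge structure. Let $(H_\mathbf{Z},F,Q)$ be a polarized VHS of weight $q$ on $Y$ and let $\iota\colon(H'_\mathbf{Z},F',Q')\hookrightarrow(H_\mathbf{Z},F,Q)$ be an injective homomorphism of VHS over $\mathbf{Q}$; I want to produce a complementary sub-VHS. First I would equip the fibers $H_y$ with the flat (pseudo-)hermitian pairing $h_y(u,v)=Q_y(u,\bar v)$ attached to the polarization. Although $Q$ itself is not positive definite, on each Hodge-graded piece it is definite up to the standard sign $i^{p-q}(-1)^{\dots}$, and more importantly the form $Q$ is parallel for the Gauss--Manin connection and a morphism of VHS respects it. The key point is that the Hodge structure on the sub-object $H'$ is \emph{induced}, i.e. $F'^p=F^p\cap H'_\mathbf{C}$, so $H'$ inherits a polarization from $Q$; consequently the restriction of the intersection form to each fiber $H'_y$ is non-degenerate, hence the orthogonal complement $H'^{\perp}_y=\{v\in H_y : Q_y(v,H'_y)=0\}$ satisfies $H_y=H'_y\oplus H'^{\perp}_y$ fiberwise.

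The second step is to check that $y\mapsto H'^{\perp}_y$ defines a sub-VHS, i.e. that it is locally constant over $\mathbf{Q}$ and compatible with $W$ (trivial here, pure weight) and with $F$. Local constancy over $\mathbf{Q}$ follows because $Q$ is defined over $\mathbf{Q}$ and is flat: the monodromy representation preserves $Q$ and preserves $H'_\mathbf{Q}$, hence preserves $H'^{\perp}_\mathbf{Q}$, so the latter is a local system of $\mathbf{Q}$-vector spaces. For the Hodge filtration, I would use that $Q$ pairs $H^{p,q}$ with $H^{q,p}$ and annihilates all other pieces; together with $F'^p=F^p\cap H'_\mathbf{C}$ this forces $H'^{\perp}_\mathbf{C}$ to be a direct sum of Hodge pieces in each fiber, varying holomorphically, so it is a sub-VHS, and $Q$ restricts to a polarization on it. Then $\iota$ splits by the projection $H_\mathbf{Q}\to H'_\mathbf{Q}$ along $H'^{\perp}_\mathbf{Q}$, and this projection is a morphism of VHS because it is flat and filtration-compatible. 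This gives the claimed semi-simplicity.

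The one genuinely non-formal ingredient — and the step I expect to be the main obstacle to write cleanly — is the assertion that the sub-VHS $H'$ is automatically \emph{polarized} by the ambient $Q$, equivalently that $Q_y$ restricts to a non-degenerate form on each fiber $H'_y$. This is where Hodge theory really enters: it follows from the Hodge--Riemann bilinear relations, since on each $(p,q)$-piece of $H'_y$ the Hermitian form $h_y$ has a definite sign, so it cannot degenerate on the whole fiber. Thus the argument is, at bottom, the observation that a sub-Hodge-structure of a polarized Hodge structure is again polarized, globalized over $Y$ using flatness of $Q$ and the rigidity of the Hodge filtration under the monodromy. The remaining verifications (local constancy, holomorphy of the filtration, that the splitting is a morphism) are routine once this definiteness is in hand, so I would state the Hodge--Riemann step carefully and treat the rest briefly.
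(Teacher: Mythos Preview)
Your proposal is correct and follows essentially the same line as the paper's proof: take the $Q$-orthogonal complement of the sub-VHS inside the ambient local system, verify it is a sub-Hodge structure via the orthogonality of $Q$ on distinct Hodge types, and invoke the Hodge--Riemann bilinear relations to get transversality. The paper carries this out in three lines (showing that each Hodge component $u^{p,q}$ of a vector $u$ in the orthogonal complement again lies in the orthogonal complement, and that $H_1\cap H_2=0$ by the index theorem), so your more detailed treatment of local constancy and filtration compatibility is sound but can be compressed.
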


\begin{proof}
Let $H_1 \to H$ be an injective homomorphism of variations of Hodge 
structures.
We take the orthogonal complement $H_{2,\mathbf{Q}}$ of $H_{1,\mathbf{Q}}$
in $H_{\mathbf{Q}}$ with respect to the polarization $Q$.
Then $H_2 = H \cap H_{2,\mathbf{Q}}$ is again a polarized variation of 
Hodge structures.
Indeed, if $u = \sum u^{p,q} \in H_{2,\mathbf{C}}$, then $Q(u^{p,q},v) = 
Q(u^{p,q},v^{q,p}) = Q(u,v^{q,p}) = 0$ for all 
$v = \sum v^{p,q} \in H_{1,\mathbf{C}}$,
hence $u^{p,q} \in H_{2,\mathbf{C}}$.
Moreover we have $H_{1,\mathbf{C}} \cap H_{2,\mathbf{C}} = 0$ because of the
Riemann-Hodge index theorem.
\end{proof}

%%%%%%%%%%%%%%%%%%%%%%%%%%%%%%%%%%%%%%%%%%%%%%%%%%%%%
\section{Weight filtration of the $\mathbf{Q}$-level complex}

Let $f: (X,B) \to (Y,C)$ be a weakly semistable toroidal model
of an algebraic fiber space.
Let $\rho_X: X^{\log} \to X$ and $\rho_Y: Y^{\log} \to Y$
be the associated log spaces with the induced continuous map
$f^{\log}: X^{\log} \to Y^{\log}$ which is 
topologically locally trivial.
Let $y \in Y$ be an arbitrary point, and $\bar y \in Y^{\log}$
a point above $y$.
If $y$ is contained in exactly $s$ irreducible components of $C$, 
then $\rho_Y^{-1}(y)$ is homeomorphic to $(S^1)^s$, and 
$\bar y$ is parametrized by angles $(\theta_1, \dots, \theta_s)$.

Let $E = f^{-1}(y)$ and $D = (f^{\log})^{-1}(\bar y)$ be the
fibers.
We shall put a weight filtration on a complex of sheaves 
$R(\rho_X)_*\mathbf{Z}_D$ on $E$ in this section.

Let $\{E_i\}$ be the set of irreducible components of $E$.
A {\em closed stratum} $E_I$ of $E$ is an irreducible component
of the intersection of some of the $E_i$.
Let $t = t(E_I) = \text{codim}_E E_I$.
We note that $t$ is not necessarily equal to the number of 
the $E_i$ which contain $E_I$.
Let $D_I = \rho_X^{-1}E_I \cap D$.
Then $\rho_X^{-1}E_I$ is homeomorphic to the product
$D_I \times \rho_Y^{-1}(y)$.
We denote $E^{[t]} = \coprod_{t(E_I)=t} E_I$ and 
$D^{[t]} = \coprod_{t(E_I)=t} D_I$.

Let $G_I$ be the union of all the strata which is properly contained
in $E_I$.
Then $(E_I,G_I)$ is a quasi-smooth toroidal variety.
Let $\rho_I: E_I^{\log} \to E_I$ be the associated log space.
We can see that $\rho_X: D_I \to E_I^{\log}$ is a $t$-times 
direct sum of oriented $S^1$ fiber bundles corresponding to the 
normal directions of $E_I$ in $E$. 

We recall the definition of a convolution of a complex of objects
in a triangulated category \cite{GM}.
Let
\[
a_0 \to a_1 \to \dots \to a_{n-1} \to a_n
\]
be a complex of objects.
If there exists a sequence of distinguished triangles
\[
b_{k-1} \to a_{k-1} \to b_k \to b_{k-1}[1] \\
\]
for $0 < k \le n$ with an isomorphism $b_n \to a_n$,
then $b_0$ is said to be a {\em convolution} of the complex.
A convolution may not exist and may not be unique if it exists.

We have a Mayor-Vietoris exact sequence
\[
0 \to \mathbf{Z}_D \to \mathbf{Z}_{D^{[0]}} \to 
\mathbf{Z}_{D^{[1]}} \to \mathbf{Z}_{D^{[2]}} \to \cdots
\]
In other words, $\mathbf{Z}_D$ is a convolution of a complex
\[
\mathbf{Z}_{D^{[0]}} \to 
\mathbf{Z}_{D^{[1]}} \to \mathbf{Z}_{D^{[2]}} \to \cdots
\]
Thus $R(\rho_X)_*\mathbf{Z}_D$ is a convolution of the 
following complex
\[
R(\rho_X)_*\mathbf{Z}_{D^{[0]}} \to 
R(\rho_X)_*\mathbf{Z}_{D^{[1]}} \to 
R(\rho_X)_*\mathbf{Z}_{D^{[2]}} \to \dots
\]

We define a weight filtration on $R(\rho_X)_*\mathbf{Z}_D$ 
as a convolution of canonical truncations:

\begin{Prop}
For any integer $q$, the following complex
\[
\tau_{\le q}(R(\rho_X)_*\mathbf{Z}_{D^{[0]}}) \to 
\tau_{\le q+1}(R(\rho_X)_*\mathbf{Z}_{D^{[1]}}) \to 
 \to \dots
\]
has a convolution $W_q(R(\rho_X)_*\mathbf{Z}_D)$,  
where $\tau$ denotes the canonical filtration, i.e.,
\[
H^p(\tau_{\le q+t}(R(\rho_X)_*\mathbf{Z}_{D^{[t]}}))
= \begin{cases}
H^p(R(\rho_X)_*\mathbf{Z}_{D^{[t]}}) &\text{ if } p \le q+t \\
0 &\text{ otherwise} \end{cases}
\]
which satisfies the following conditions:

(1) $W_q(R(\rho_X)_*\mathbf{Z}_D) \cong 0$ for sufficiently small $q$, and
$W_q(R(\rho_X)_*\mathbf{Z}_D) \cong R(\rho_X)_*\mathbf{Z}_D$ for 
sufficiently large $q$.

(2) There are distinguished triangles
\[
\begin{split}
&\text{Gr}^W_q(R(\rho_X)_*\mathbf{Z}_D)[-1] \to 
W_{q-1}(R(\rho_X)_*\mathbf{Z}_D) \to W_q(R(\rho_X)_*\mathbf{Z}_D) \\
&\to \text{Gr}^W_q(R(\rho_X)_*\mathbf{Z}_D)
\end{split}
\]
such that 
\[
\text{Gr}^W_q(R(\rho_X)_*\mathbf{Z}_D) \cong \bigoplus_{t \ge 0}
R^{q+t}(\rho_X)_*\mathbf{Z}_{D^{[t]}}[-q-2t].
\]
\end{Prop}

\begin{proof}
We denote $a_k = R(\rho_X)_*\mathbf{Z}_{D^{[k]}}$ and 
$a_k^q=\tau_{\le q+k}(R(\rho_X)_*\mathbf{Z}_{D^{[k]}})$.
Let $b_k$ and $b_k^q$ be the corresponding sequences of objects appearing 
in the process of convolutions.
We prove by the descending induction on $k$ that there is a unique morphism 
$a_{k-1}^q \to b_k^q$ which factors $a_{k-1}^q \to a_k^q$ and a 
morphism $b_k^q \to b_k$ such that they are 
compatible with the morphism $a_{k-1} \to b_k$.

\end{proof}

\begin{Cor}
The monodromy actions of the group 
$\pi_1(\rho_Y^{-1}(y),\bar y) \cong \mathbf{Z}^s$
on the cohomology groups 
$H^p(D,\mathbf{Z}_D)$ are unipotent for all $p$.
\end{Cor}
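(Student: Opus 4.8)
The plan is to deduce the unipotence of the monodromy from the filtered structure established in the preceding Proposition, together with the topological local triviality of $f^{\log}$. First I would record the following picture: the group $\pi_1(\rho_Y^{-1}(y),\bar y) \cong \mathbf{Z}^s$ acts on the fiber $D = (f^{\log})^{-1}(\bar y)$ only through its action on $\rho_Y^{-1}(y) \cong (S^1)^s$, and since $f^{\log}$ is a locally trivial fiber bundle (Corollary in \S2), this action is the monodromy of the local system $R^p(f^{\log})_*\mathbf{Z}_{X^{\log}}$ restricted near $y$. Equivalently, it is the monodromy on $H^p(D,\mathbf{Z})$ induced by going around the $s$ coordinate loops in the base.

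The key point is that the weight filtration $W_\bullet$ on $R(\rho_X)_*\mathbf{Z}_D$ is canonically defined out of the stratification of $E = f^{-1}(y)$ and the canonical (truncation) filtrations on the $R(\rho_X)_*\mathbf{Z}_{D^{[t]}}$, so it is preserved by every automorphism of $D$ that respects the stratification — in particular by the monodromy, since the monodromy comes from a locally trivial bundle and hence carries strata to strata and commutes with $\rho_X$. Therefore the induced monodromy operators on $H^p(D,\mathbf{Z})$ preserve the induced filtration $W_\bullet H^p(D,\mathbf{Z})$ (coming from the hypercohomology spectral sequence of $W_\bullet$ on $E$ after applying $R\Gamma$). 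Next I would analyze the action on each graded piece: by the Proposition,
\[
\mathrm{Gr}^W_q(R(\rho_X)_*\mathbf{Z}_D) \cong \bigoplus_{t \ge 0} R^{q+t}(\rho_X)_*\mathbf{Z}_{D^{[t]}}[-q-2t],
\]
and each $R^{q+t}(\rho_X)_*\mathbf{Z}_{D^{[t]}}$ is pulled back from $E_I^{\log}$, i.e.\ it is constant along the $(S^1)^s$-directions on $\rho_X^{-1}E_I \cong D_I \times \rho_Y^{-1}(y)$ (using that $\rho_X : D_I \to E_I^{\log}$ is a product of oriented $S^1$-bundles in the normal directions and that these $S^1$'s carry the monodromy trivially on the level of the direct image). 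So the monodromy acts trivially on the associated graded. A unipotent-by-triviality argument then finishes: an automorphism of a finite-length filtered object which acts as the identity on every graded piece is unipotent, hence the monodromy operators on $H^p(D,\mathbf{Z})$ — being filtered with trivial action on $\mathrm{Gr}^W$ — are unipotent, and the same for all the generators of $\mathbf{Z}^s$.

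I expect the main obstacle to be making rigorous the claim that the monodromy acts trivially on the graded pieces, i.e.\ on the $R^{q+t}(\rho_X)_*\mathbf{Z}_{D^{[t]}}$. One has to check carefully that, for a closed stratum $E_I$, the identification $\rho_X^{-1}E_I \cong D_I \times \rho_Y^{-1}(y)$ is compatible with $f^{\log}$ in such a way that the base loops act on the $\rho_Y^{-1}(y) = (S^1)^s$ factor by rotation and trivially on $D_I$ up to a degree-preserving, orientation-preserving identification of the normal $S^1$-bundles — so that the induced action on $R^{q+t}(\rho_X)_*\mathbf{Z}_{D^{[t]}}$, which only sees the cohomology of those $S^1$-fibers and the base $E_I^{\log}$, is trivial. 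Granting the local structure statements from \S1 (the toric/quasi-smooth local models) and \S2 (the log space is built by gluing standard pieces), this is a bookkeeping argument about how the torus coordinates $(\theta_1,\dots,\theta_s)$ downstairs relate to the angular coordinates upstairs, but it is the step where the geometry really enters; everything else is the formal "filtered automorphism trivial on $\mathrm{Gr}$ is unipotent" principle applied after taking $R\Gamma(E,-)$.
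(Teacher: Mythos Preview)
Your argument is correct and follows essentially the same route as the paper: use the weight filtration on $R(\rho_X)_*\mathbf{Z}_D$, observe that the monodromy acts trivially on each graded piece because $\rho_X^{-1}(E_I) \cong D_I \times \rho_Y^{-1}(y)$, and conclude unipotence from the spectral sequence (equivalently, from the ``filtered automorphism trivial on $\mathrm{Gr}$ is unipotent'' principle). The paper's proof is terser---it simply invokes the product structure of $\rho_X^{-1}(E_I)$ for the triviality on graded pieces---but your elaboration of this step and your identification of it as the geometric crux are accurate.
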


\begin{proof}
We have $H^p(D,\mathbf{Z}_D) = \mathbf{H}^p(E,R(\rho_X)_*\mathbf{Z}_D)$.
The monodromy actions on the graded pieces
$\mathbf{H}^p(E,\text{Gr}_q^W(R(\rho_X)_*\mathbf{Z}_D))$
are trivial, because $\rho_X^{-1}(E_I)$ are just homeomorphic to
the product spaces.
By the spectral sequence
\[
E_2^{p,q} = \mathbf{H}^{p+q}(E,\text{Gr}_{-p}^W(R(\rho_X)_*\mathbf{Z}_D))
\Rightarrow H^{p+q}(D,\mathbf{Z}_D)
\]
we conclude the proof.
\end{proof}

\begin{Expl}
Let $X = \text{Spec }\mathbf{C}[x_1,x_2]$, $B = \text{div}(x_1x_2)$,
$Y = \text{Spec }\mathbf{C}[y]$, $C = \text{div}(y)$, and define
$f: (X.B) \to (Y,C)$ by $f^*y=x_1x_2$.
Then $f^{\log}: X^{\log} \to Y^{\log}$ is given by
$(r_1,r_2,\theta_1,\theta_2) \mapsto (r_1r_2,\theta_1+\theta_2)$.
Let $\bar y = (0,\phi) \in Y^{\log}$.
Then $(f^{\log})^{-1} = D_1 \cup D_2$, where
$D_1=\{(r_1,0,\theta_1,\phi-\theta_1)\}$ and 
$D_2=\{(0,r_2,\phi-\theta_2,\theta_2)\}$.
There are homeomorphisms 
$D_1 \cong \mathbf{R}_{\ge 0} \times S^1$ and 
$D_2 \cong \mathbf{R}_{\ge 0} \times S^1$ given by
$(r_1,0,\theta_1,\phi-\theta_1) \mapsto (r_1,\theta_1)$ and 
$(0,r_2,\phi-\theta_2,\theta_2) \mapsto (r_2,\theta_2)$.
Then the gluing of $D_1$ and $D_2$ is given by
$(0,\theta) \mapsto (0,\phi - \theta)$ on $S^1$.
In other words, the gluing is twisted by the argument $\phi$ of $\bar y$.
If $\bar y$ goes around $S^1$, then the gluing is twisted correspondingly.
This is the action of the monodromy.
\end{Expl}

%%%%%%%%%%%%%%%%%%%%%%%%%%%%%%%%%%%%%%%%%%%%%%%%%%%%%
\section{Sheaves on log spaces}

Let $(X,B)$ be a quasi-smooth toroidal variety,
and $\rho: X^{\log} \to X$ the associated log space.
We shall define the \lq\lq structure sheaf'' 
$\mathcal{O}_{X^{\log}}$ and the De Rham complex
$\Omega^{\bullet}_{X^{\log}}$ in this section.

First we consider the case where $X = \mathbf{C}^n$ with 
coordinates $(x_1,\dots,x_n)$ and $B = \text{div}(x_1\dots x_r)$.
Then we define
\[
\mathcal{O}_{X^{\log}} = 
\sum_{k_1,\dots,k_r \in \mathbf{Z}_{\ge 0}}
\rho^{-1}(\mathcal{O}_X)\prod_{i=1}^r (\log x_i)^{k_i}
\]
where the symbols $\log x_i$ are regarded as holomorphic functions
over the open subset $\rho^{-1}(X \setminus \text{div}(x_i))$, 
while locally constant sections over the boundary $\rho^{-1}(\text{div}(x_i))$.
They are algebraically independent as long as they are symbols.
We note that the right hand side of the above definition
does not change if we replace the symbols $\log x_i$ by
the shifts $\log x_i + c_i$ for arbitrary constants $c_i \in \mathbf{C}$.
Thus $\mathcal{O}_{X^{\log}} \vert_{\rho^{-1}(X \setminus B)} = 
\mathcal{O}_X \vert_{X \setminus B}$, and 
the stalk $\mathcal{O}_{X^{\log},\bar x}$ 
at a point $\bar x \in \rho^{-1}(B)$ is isomorphic
to a polynomial ring $\mathcal{O}_{X,0}[t_{i_1},\dots,t_{i_l}]$, where 
the $t_i$ are independent variables corresponding to the symbols $\log x_i$
if $\rho(\bar x)$ is contained in exactly $l$ irreducible
components $B_{i_1}, \dots, B_{i_l}$ of $B$.

Next if $\pi: (X,B) \to (X,B)/G = (X',B')$ is a quotient 
of the above pair $(X,B)$ by a finite abelian group $G$, then
we define $\mathcal{O}_{(X')^{\log}} = 
(\pi^{\log}_*\mathcal{O}_{X^{\log}})^G$.
We note that the action of $G$ on $X^{\log}$ is free, hence
the stalks are isomorphic to polynomial rings for suitable 
number of variables. 

In the general case, the structure sheaves for an open covering of $X^{\log}$
are glued together to yield the structure sheaf of $X^{\log}$ because
they coincide with the usual structure sheaves when restricted to 
$X \setminus B$.

The sheaf of differentials is defined by the following formula:
\[
\Omega^p_{X^{\log}} = \rho^{-1}(\Omega^p_X(\log B))
\otimes_{\rho^{-1}(\mathcal{O}_X)} \mathcal{O}_{X^{\log}}.
\]
The differential $d: \Omega^p_{X^{\log}}
\to \Omega^{p+1}_{X^{\log}}$ is defined by the rule
$d(\log x) = dx/x$.
It follows that the De Rham 
complex $\Omega^{\bullet}_{X^{\log}}$ does not have
higher cohomologies:

\begin{Prop}[Poincar\'e lemma \cite{AF}]
\[
\begin{split}
&\mathbf{C}_{X^{\log}}
\cong \Omega^{\bullet}_{X^{\log}} \\
&R\rho_*\Omega^p_{X^{\log}} \cong \Omega^p_X(\log B) \\
&R\rho_*\mathbf{C}_{X^{\log}} \cong 
\Omega^{\bullet}_X(\log B)
\end{split}
\]
\end{Prop}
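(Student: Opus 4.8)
The plan is to reduce all three assertions to two local statements on the standard charts and then glue. Since the first quasi-isomorphism is local on $X^{\log}$ and the other two are local on $X$, and since $X^{\log}$ is locally either the log space of a smooth chart $X=\mathbf{C}^n$, $B=\mathrm{div}(x_1\cdots x_r)$ or a free finite abelian quotient of such a chart, it suffices to treat these two cases. I would first note that the three statements collapse to two. As $\Omega^p_X(\log B)$ is locally free and $\Omega^p_{X^{\log}}=\rho^{-1}(\Omega^p_X(\log B))\otimes_{\rho^{-1}\mathcal{O}_X}\mathcal{O}_{X^{\log}}$, the projection formula turns the middle line into the single statement $R\rho_*\mathcal{O}_{X^{\log}}\cong\mathcal{O}_X$. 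And granting the first line together with the vanishing $R^q\rho_*\Omega^p_{X^{\log}}=0$ for $q>0$, the hypercohomology spectral sequence $E_1^{p,q}=R^q\rho_*\Omega^p_{X^{\log}}\Rightarrow\mathcal{H}^{p+q}(R\rho_*\Omega^\bullet_{X^{\log}})$ degenerates and gives $R\rho_*\mathbf{C}_{X^{\log}}\cong R\rho_*\Omega^\bullet_{X^{\log}}\cong\Omega^\bullet_X(\log B)$, which is the third line. So the two things to prove are
\[
\mathbf{C}_{X^{\log}}\cong\Omega^\bullet_{X^{\log}}
\qquad\text{and}\qquad
R\rho_*\mathcal{O}_{X^{\log}}\cong\mathcal{O}_X .
\]

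For the De Rham Poincar\'e lemma I would check exactness of $\Omega^\bullet_{X^{\log}}$ on stalks. Over $\rho^{-1}(X\setminus B)$ this is the ordinary holomorphic Poincar\'e lemma. At a boundary point lying over $B_{i_1}\cap\cdots\cap B_{i_l}$ the stalk of $\mathcal{O}_{X^{\log}}$ is a polynomial ring $\mathcal{O}_{X,y}[t_{i_1},\dots,t_{i_l}]$ with $t_i\leftrightarrow\log x_i$ and $dt_i=dx_i/x_i$, and $\Omega^\bullet_{X^{\log}}$ is the associated logarithmic De Rham complex. Peeling off the variables one at a time reduces the cohomology computation to a one-variable calculation: the derivation $\delta=x\,\partial_x+\partial_t$ on a ring $R[t]$ has kernel the constants and surjects modulo the $t$-independent part, which one verifies by an elementary recursion on the degree in $t$, using that $x\,\partial_x$ is onto the maximal ideal with kernel the constants. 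This yields $\mathbf{C}_{X^{\log}}\cong\Omega^\bullet_{X^{\log}}$ on the smooth charts. For a free quotient $\pi^{\log}\colon X^{\log}\to(X')^{\log}$ by $G$ the map $\pi^{\log}$ is a covering, so $\pi^{\log}_*$ is exact and, together with the invariants $(-)^G$ which are exact on sheaves of $\mathbf{C}$-vector spaces, commutes with taking cohomology; since $\Omega^\bullet_{(X')^{\log}}=(\pi^{\log}_*\Omega^\bullet_{X^{\log}})^G$ and $(\pi^{\log}_*\mathbf{C}_{X^{\log}})^G=\mathbf{C}_{(X')^{\log}}$, this descends. Gluing the charts finishes this half.

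For $R\rho_*\mathcal{O}_{X^{\log}}\cong\mathcal{O}_X$ I would first observe $\rho_*\mathcal{O}_{X^{\log}}=\mathcal{O}_X$: a local section is a polynomial in the $\log x_i$ with holomorphic coefficients which, to be single-valued on the torus fibres of $\rho$, must be invariant under $\log x_i\mapsto\log x_i+2\pi\sqrt{-1}$, hence $\log$-free. For the higher direct images I would filter $\mathcal{O}_{X^{\log}}$ by the order $\le N$ in the $\log x_i$; this filtration is trivial over $X\setminus B$, and its graded pieces are monodromy-trivial sheaves on $\rho^{-1}(B)$ whose higher direct images along the torus fibres need not vanish. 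The key will be that the connecting homomorphisms in the long exact sequences of the filtration steps are isomorphisms --- morally multiplication by powers of $2\pi\sqrt{-1}$ coming from the jump of $\log x_i$ under the monodromy --- so that in $R^q\rho_*\mathcal{O}_{X^{\log}}=\varinjlim_N R^q\rho_*(\text{order}\le N)$ all transition maps vanish for $q>0$ and the colimit is $0$. The quotient case then follows formally from $R\rho'_*\mathcal{O}_{(X')^{\log}}=\bigl(R(\pi\circ\rho)_*\mathcal{O}_{X^{\log}}\bigr)^G=(\pi_*\mathcal{O}_X)^G=\mathcal{O}_{X'}$, using $R\pi_*=\pi_*$ for the finite map $\pi$ and exactness of $(-)^G$; then one glues.

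The hard part will be precisely this last vanishing $R^q\rho_*\mathcal{O}_{X^{\log}}=0$ for $q>0$: the torus fibres carry nontrivial cohomology, so one cannot argue by acyclicity of the fibres, and one must instead bookkeep the cohomology of the graded pieces of the log-order filtration and check that the connecting maps wipe it out in the colimit. Everything else --- the stalkwise De Rham computation, the projection formula, the spectral-sequence degeneration, and the descent through the free $G$-action together with the gluing --- is either a short explicit calculation or purely formal.
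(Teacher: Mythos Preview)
The paper does not supply a proof of this proposition; it is simply quoted from \cite{AF} (and the underlying argument is in \cite{KN}). So there is no in-paper proof to compare against.

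That said, your outline is sound and matches the standard argument. The reduction via the projection formula to the single statement $R\rho_*\mathcal{O}_{X^{\log}}\cong\mathcal{O}_X$, together with the stalkwise De Rham computation, is exactly the right organization, and the passage to the quasi-smooth case by a free finite abelian quotient is correct as stated. Your identification of $R^q\rho_*\mathcal{O}_{X^{\log}}=0$ for $q>0$ as the only nontrivial point is accurate: the torus fibres do contribute cohomology to each graded piece $F_N/F_{N-1}\cong\rho^{-1}\mathcal{O}_X$ of the log-order filtration, and it is precisely the unipotent monodromy $\log x_i\mapsto\log x_i+2\pi i$ that makes the transition maps $R^q\rho_*F_{N-1}\to R^q\rho_*F_N$ vanish, so that the colimit is zero. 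In the one-variable model this is the elementary computation that for the rank-$(N{+}1)$ local system on $S^1$ with monodromy a single Jordan block, the image of $H^1$ of the rank-$N$ sub-block in $H^1$ of the rank-$(N{+}1)$ block is zero; the multi-variable case follows by K\"unneth. One small caution: to pass from $\varinjlim_N R^q\rho_*F_N$ to $R^q\rho_*\mathcal{O}_{X^{\log}}$ you are using that $R^q\rho_*$ commutes with filtered colimits, which is fine here since $\rho$ is proper, but it is worth saying explicitly.
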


Let $f: (X,B) \to (Y,B)$ be a well prepared toroidal morphism.
Then the sheaves of relative differential forms are defined similarly:
\[
\Omega^p_{X^{\log}/Y^{\log}} = 
\rho^{-1}(\Omega^p_{X/Y}(\log))
\otimes_{\rho^{-1}(\mathcal{O}_X)} \mathcal{O}_{X^{\log}}.
\]
In the following, 
we denote $\rho'=(\rho_X,f^{\log}): X^{\log}  
\to X \times_Y Y^{\log}$ with the projections
$q_1: X \times_Y Y^{\log} \to X$ and 
$q_2: X \times_Y Y^{\log} \to Y^{\log}$.

\begin{Prop}[Poincar\'e lemma 2 \cite{AF}]
\[
\begin{split}
&(f^{\log})^{-1}(\mathcal{O}_{Y^{\log}})
\cong \Omega^{\bullet}_{X^{\log}/Y^{\log}} \\
&R\rho'_*\Omega^p_{X^{\log}/Y^{\log}} 
\cong q_1^{-1}(\Omega^p_{X/Y}(\log)) 
\otimes_{q_2^{-1}\rho_Y^{-1}(\mathcal{O}_Y)}
q_2^{-1}(\mathcal{O}_{Y^{\log}}) \\
&R\rho'_*(f^{\log})^{-1}(\mathcal{O}_{Y^{\log}}) 
\cong q_1^{-1}(\Omega^{\bullet}_{X/Y}(\log)) 
\otimes_{q_2^{-1}\rho_Y^{-1}(\mathcal{O}_Y)}
q_2^{-1}(\mathcal{O}_{Y^{\log}})
\end{split} 
\]
\end{Prop}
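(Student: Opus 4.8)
The plan is to reduce everything to the local model and to the absolute Poincaré lemma already established. First I would localize on $Y$: since the statement is about sheaves on $X^{\log}$ and on $X \times_Y Y^{\log}$, and all the functors involved are compatible with restriction to opens of $Y$, I may assume $Y = \mathbf{C}^m$ with coordinates $(y_1,\dots,y_m)$ and $C = \mathrm{div}(y_1 \cdots y_c)$. Then $Y^{\log}$ is a product of copies of $\mathbf{R}_{\ge 0} \times S^1$ and of $\mathbf{C}$, and $\mathcal{O}_{Y^{\log}}$ is the polynomial extension $\rho_Y^{-1}(\mathcal{O}_Y)[\log y_1,\dots,\log y_c]$. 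The key observation is that $(f^{\log})^{-1}(\mathcal{O}_{Y^{\log}})$ is obtained from $(f^{\log})^{-1}\rho_Y^{-1}(\mathcal{O}_Y) = (\rho_X)^{-1} f^{-1}(\mathcal{O}_Y)$ by adjoining the pulled-back symbols $f^*\log y_j$, each of which by the toroidal structure (Example in \S1) is a $\mathbf{Z}_{\ge 0}$-linear combination $\sum_k r_{jk} \log x_k$ of the symbols on $X^{\log}$. Hence $\Omega^{\bullet}_{X^{\log}/Y^{\log}}$ is literally the absolute complex $\Omega^{\bullet}_{X^{\log}}$ tensored down over these relations, i.e. the Koszul-type quotient by the exact subcomplex generated by the one-forms $d(f^*\log y_j) = f^*(dy_j/y_j)$.

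Next I would prove the first isomorphism, the relative Poincaré lemma $(f^{\log})^{-1}(\mathcal{O}_{Y^{\log}}) \cong \Omega^{\bullet}_{X^{\log}/Y^{\log}}$, by the same explicit homotopy argument used in the absolute case. Working in the local toric model, after passing to a finite abelian cover we may assume $(X,B)$ smooth with coordinates $(x_1,\dots,x_n)$, and $f^*y_j = \prod_k x_k^{r_{jk}}$ with the support condition that each $x_k$ divides at most one $f^*y_j$; reorganizing, the fiber directions are spanned by a subset of the $\log x_k$ and by the non-logarithmic coordinates, and one writes down the standard contracting homotopy integrating in those variables while treating the $\log$ symbols as polynomial generators. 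The homotopy is $G$-equivariant, so it descends to the quotient. This gives the first line. The remaining two lines then follow formally: the second is $R\rho'_*$ applied to $\Omega^p_{X^{\log}/Y^{\log}}$, and since $\Omega^p_{X^{\log}/Y^{\log}} = \rho^{-1}(\Omega^p_{X/Y}(\log)) \otimes_{\rho^{-1}\mathcal{O}_X} \mathcal{O}_{X^{\log}}$ with $\mathcal{O}_{X^{\log}}$ a polynomial algebra on the symbols $\log x_i$, the computation of $R\rho'_*$ reduces — fiber by fiber over $X \times_Y Y^{\log}$, using the projection formula and the fact that $\rho'$ restricted to each such fiber is a product of oriented circle bundles and line bundles whose cohomology is computed exactly as in the absolute Poincaré lemma — to tensoring $\Omega^p_{X/Y}(\log)$ up along $q_2^{-1}\rho_Y^{-1}(\mathcal{O}_Y) \to q_2^{-1}\mathcal{O}_{Y^{\log}}$. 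The third line is the hypercohomology of the second combined with the first.

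The main obstacle I anticipate is bookkeeping in the case where $(X,B)$ is only quasi-smooth and $f$ is merely well prepared: one must check that the finite abelian cover $\pi \colon (X',B') \to (X,B)$ chosen to smooth the local model is compatible with $f$, i.e. that $f \circ \pi$ still has a monomial local form so that the relative symbols $f^*\log y_j$ remain $\mathbf{Z}_{\ge 0}$-combinations of the $\log x'_k$, and that the contracting homotopy can be chosen $G$-equivariant and then averaged. A second delicate point is that in the relative setting the "fiber variables" include genuine holomorphic coordinates as well as $\log$ symbols, so the homotopy operator mixes an honest $\bar\partial$-Poincaré-type integration (for the holomorphic fiber directions) with the purely formal polynomial homotopy (for the logarithmic directions); keeping the two kinds of variables separate and checking the homotopy identity termwise is where the real work lies, though it is a routine extension of the argument already given for Poincaré lemma 1 in \cite{AF}. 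Everything else is formal manipulation of $R\rho'_*$ via the local product structure $\rho_X^{-1}(E_I) \cong D_I \times \rho_Y^{-1}(y)$ recorded in \S6.
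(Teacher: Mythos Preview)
The paper does not actually give a proof of this proposition: it is stated with the citation \cite{AF} and no argument is supplied in the text. So there is no proof in this paper to compare your proposal against.

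That said, your outline is the correct and standard one, and is essentially what one finds in the cited reference. The reduction to the local toric model, the explicit homotopy on the polynomial algebra in the symbols $\log x_i$ combined with the holomorphic Poincar\'e lemma in the remaining coordinates, and the descent along the finite abelian quotient in the quasi-smooth case are exactly the ingredients used there. Your derivation of the second and third isomorphisms from the first via the projection formula and the local product structure of $\rho'$ is also on target. The two ``obstacles'' you flag are genuine bookkeeping points but present no real difficulty: the toroidal hypothesis on $f$ guarantees compatibility of the local abelian cover with the map to $Y$, and the homotopy operator does split cleanly into a polynomial part on the log symbols and an analytic part on the residual holomorphic coordinates.
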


Now we define sheaves on the fiber $D$ and its strata $D_I$.
The restriction of $\rho_X$ to $D$ is denoted by $\rho_D$.
Let $y_i$ be the local coordinates at $y \in Y$ which define 
irreducible components of $C$ passing through $y$.
Then the symbols $\log y_i$ are replaced by $0$ on $D$.
we write this fact by $\sim$ in the following definition:
\[
\begin{split}
&\mathcal{O}_D = (\mathcal{O}_{X^{\log}} 
\otimes_{\rho_X^{-1}(\mathcal{O}_X)} 
\rho_D^{-1}(\mathcal{O}_E))/\sim \\
&\mathcal{O}_{D_I} = (\mathcal{O}_{X^{\log}} 
\otimes_{\rho_X^{-1}(\mathcal{O}_X)} 
\rho_D^{-1}(\mathcal{O}_{E_I}))/\sim \\
&\Omega^p_D = \rho_D^{-1}\Omega^p_{E/\mathbf{C}}(\log) 
\otimes_{\rho_D^{-1}(\mathcal{O}_E)} \mathcal{O}_D 
= \Omega^p_{X^{\log}/Y^{\log}} \otimes_{\mathcal{O}_{X^{\log}}}
\mathcal{O}_D \\
&\Omega^p_{D_I} = \rho_D^{-1}\Omega^p_{E/\mathbf{C}}(\log) 
\otimes_{\rho_D^{-1}(\mathcal{O}_E)} \mathcal{O}_{D_I} 
= \Omega^p_{X^{\log}/Y^{\log}} \otimes_{\mathcal{O}_{X^{\log}}}
\mathcal{O}_{D_I}
\end{split}
\]
Let 
\[
\Omega^p_{E/\mathbf{C}}(\log) = \Omega^p_{X/Y}(\log) \otimes \mathcal{O}_E.
\]

\begin{Cor}[Poincar\'e lemma 3 \cite{AF}]
\[
\begin{split}
&\mathbf{C}_D \cong \Omega^{\bullet}_D, \qquad 
\mathbf{C}_{D_I} \cong \Omega^{\bullet}_{D_I} \\
&R\rho_{D*}\Omega^p_D \cong \Omega^p_{E/\mathbf{C}}(\log), \qquad 
R\rho_{D*}\Omega^p_{D_I} \cong \Omega^p_{E/\mathbf{C}}(\log)
\otimes_{\mathcal{O}_E} \mathcal{O}_{E_I} \\
&R\rho_{D*}\mathbf{C}_D \cong 
\Omega^{\bullet}_{E/\mathbf{C}}(\log), \qquad  
R\rho_{D*}\mathbf{C}_{D_I} \cong 
\Omega^{\bullet}_{E/\mathbf{C}}(\log)
\otimes_{\mathcal{O}_E} \mathcal{O}_{E_I}
\end{split}
\]
\end{Cor}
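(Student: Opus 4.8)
The plan is to obtain all six isomorphisms by ``evaluating Poincar\'e lemma~2 at the point $\bar y\in Y^{\log}$''. The key observation is that, by construction, $\Omega^\bullet_D=\Omega^\bullet_{X^{\log}/Y^{\log}}\otimes_{\mathcal{O}_{X^{\log}}}\mathcal{O}_D$, where $\mathcal{O}_D$ is the reduction of $\mathcal{O}_{X^{\log}}$ obtained by imposing $f^*\mathfrak{m}_y=0$ and $f^*(\log y_i)=0$. Since $f$ is flat with reduced fibres, the ideal of $E=f^{-1}(y)$ in $\mathcal{O}_X$ is $f^*\mathfrak{m}_y\cdot\mathcal{O}_X$, so $\mathcal{O}_D=\mathcal{O}_{X^{\log}}\otimes_{(f^{\log})^{-1}\mathcal{O}_{Y^{\log}}}\underline{\mathbf{C}}$, where $\underline{\mathbf{C}}$ denotes $\mathbf{C}$ viewed as an $(f^{\log})^{-1}\mathcal{O}_{Y^{\log}}$-module supported on $D$ via the augmentation at $\bar y$. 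Hence $\Omega^\bullet_D=\Omega^\bullet_{X^{\log}/Y^{\log}}\otimes_{(f^{\log})^{-1}\mathcal{O}_{Y^{\log}}}\underline{\mathbf{C}}$, and there are analogous formulas for the $D_I$-sheaves with $\mathcal{O}_E$ replaced by $\mathcal{O}_{E_I}$.

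First I would treat the acyclicity statements $\mathbf{C}_D\cong\Omega^\bullet_D$ and $\mathbf{C}_{D_I}\cong\Omega^\bullet_{D_I}$. By Poincar\'e lemma~2, $\Omega^\bullet_{X^{\log}/Y^{\log}}$ is a resolution of $(f^{\log})^{-1}\mathcal{O}_{Y^{\log}}$ by locally free $\mathcal{O}_{X^{\log}}$-modules. The sheaf $\mathcal{O}_{X^{\log}}$ is flat over $(f^{\log})^{-1}\mathcal{O}_{Y^{\log}}$: this follows from the flatness of $f$ together with the monomial form $f^*y_i=\prod_j x_j^{r_{ij}}$ (the Example in \S1), since the stalks are polynomial rings $\mathcal{O}_X[\log x_j]$ over $\mathcal{O}_Y[\log y_i]$ with $\log y_i$ going to $\sum_j r_{ij}\log x_j$ up to a unit, and the matrix $(r_{ij})$ has maximal rank. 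Hence $\Omega^\bullet_{X^{\log}/Y^{\log}}$ is a bounded-below, termwise flat, and therefore K-flat complex over $(f^{\log})^{-1}\mathcal{O}_{Y^{\log}}$, so tensoring the quasi-isomorphism $(f^{\log})^{-1}\mathcal{O}_{Y^{\log}}\cong\Omega^\bullet_{X^{\log}/Y^{\log}}$ over $(f^{\log})^{-1}\mathcal{O}_{Y^{\log}}$ with $\underline{\mathbf{C}}$ gives $\mathbf{C}_D\cong\Omega^\bullet_D$. The statement for $D_I$ is the same argument applied to the closed stratum $E_I$, using that $(E_I,G_I)$ is a quasi-smooth toroidal variety (\S6) so that the corresponding flatness is again available.

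Next I would push forward along $\rho_X$. Since $\rho_X$, hence $\rho'=(\rho_X,f^{\log})$, is proper (the Proposition in \S2), proper base change along the Cartesian square lying over $\bar y$ yields $R\rho_{D*}(\Omega^p_{X^{\log}/Y^{\log}}|_D)\cong i^*R\rho'_*\Omega^p_{X^{\log}/Y^{\log}}$, and by the second line of Poincar\'e lemma~2 this equals $\Omega^p_{X/Y}(\log)|_E\otimes_{\underline{\mathcal{O}_{Y,y}}}\underline{\mathcal{O}_{Y^{\log},\bar y}}$, concentrated in degree $0$. Applying the projection formula for the locally constant ring sheaf $\underline{\mathcal{O}_{Y^{\log},\bar y}}$ and then the reduction by $\underline{\mathbf{C}}$ — legitimate because the sheaves involved are flat over the relevant constant subsheaves and because $f$ is flat with reduced fibre, so that $\Omega^p_{X/Y}(\log)|_E\otimes^L_{\mathcal{O}_{Y,y}}\mathbf{C}(y)=\Omega^p_{X/Y}(\log)\otimes_{\mathcal{O}_X}\mathcal{O}_E=\Omega^p_{E/\mathbf{C}}(\log)$ — one gets $R\rho_{D*}\Omega^p_D\cong\Omega^p_{E/\mathbf{C}}(\log)$. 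Tensoring with $\mathcal{O}_{E_I}$ over $\mathcal{O}_E$, which is exact on these locally free sheaves, produces the $D_I$-version, and the third line follows from the first two, the vanishing $R^{>0}\rho_{D*}\Omega^p_D=0$ being exactly the degree-$0$ concentration just obtained.

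The step I expect to be the main obstacle is the justification that ``passing to the fibre $\bar y$'' is compatible with everything at once: the flatness of $\mathcal{O}_{X^{\log}}$ over $(f^{\log})^{-1}\mathcal{O}_{Y^{\log}}$, the proper base change, the projection-formula manipulations, and the bookkeeping of the coefficient reductions defining $\mathcal{O}_D$ and $\mathcal{O}_{D_I}$. This is exactly where the log space earns its keep: the fibre $E\subset X$ is singular, yet $D$ is a manifold with corners and, over a small polydisc of $Y$, $f^{\log}$ is a locally trivial topological fibre bundle (the Corollary in \S2) with $\rho_X^{-1}(E_I)\cong D_I\times\rho_Y^{-1}(y)$, so the analytic and combinatorial content reduces to the already-proven absolute Poincar\'e lemma for the quasi-smooth toroidal pair $(E_I,G_I)$, the $t=\operatorname{codim}_E E_I$ normal $S^1$-directions contributing only closed angular forms and hence, by K\"unneth, the cohomology of a torus. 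The remaining verifications are routine once this dictionary is in place.
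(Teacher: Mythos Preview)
The paper gives no proof of this statement at all: it is simply stated as a corollary of Poincar\'e lemma~2 with a reference to \cite{AF}. Your plan of obtaining the fibrewise statements by base-changing Poincar\'e lemma~2 to the point $\bar y\in Y^{\log}$, using flatness of $\mathcal{O}_{X^{\log}}$ over $(f^{\log})^{-1}\mathcal{O}_{Y^{\log}}$ for the first line and proper base change along $\rho'$ for the second, is exactly the intended derivation, and the details you supply (the identification $\mathcal{O}_D=\mathcal{O}_{X^{\log}}\otimes_{(f^{\log})^{-1}\mathcal{O}_{Y^{\log}}}\underline{\mathbf{C}}$, the K-flatness of the relative de~Rham complex, and the reduction to the quasi-smooth toroidal pair $(E_I,G_I)$ for the strata) are correct and in fact more thorough than anything the paper spells out.
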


The formation of the {\em canonical extension} is easily understood 
when we considers the structure sheaf of the log space as follows.
Let $(Y,C)$ be a smooth toroidal variety, $H_{\mathbf{Z}}$ 
a locally constant sheaf of free abelian groups
on $Y \setminus C$, 
and let $H = H_{\mathbf{Z}} \otimes \mathcal{O}_{Y \setminus C}$ be 
the associated locally free sheaf on the open part $Y \setminus C$.
Let $y \in C$ be an arbitrary point on the boundary, 
$x_j$ the local coordinates corresponding to the local branches of $C$ 
passing through $y$, and $T_j$ the monodromy transformations of 
the local system $H_{\mathbf{Z}}$ around the branches of $C$ 
corresponding to the $x_j$.
We assume that the local monodromies $T_j$ are unipotent for any $y$.
Then the {\em canonical extension} $\tilde H$ of $H$ is defined as 
a locally free sheaf
on $Y$ generated locally around $y$ by the following type of local sections
of $H$:
\[
\text{exp}(-\frac 1{2\pi i}\sum_j \log T_j \log x_i)v
\]
where the $v$ are multivalued flat sections of $H_{\mathbf{Z}}$ and the 
power series for $\text{exp}$ is finite because the $\log T_j$ are 
nilpotent.
We note that the above expressions are single valued sections of $H$ 
because the 
monodromy transformations are cancelled.

\begin{Prop}
Let $H_{\mathbf{Z}}^{\log}$ be the local system on the log space $Y^{\log}$
obtained by extensing the local system $H_{\mathbf{Z}}$.
Then 
\[
H_{\mathbf{Z}}^{\log} \otimes \mathcal{O}_{Y^{\log}}
\cong \rho_Y^{-1}\tilde H \otimes_{\rho_Y^{-1}\mathcal{O}_Y} 
\mathcal{O}_{Y^{\log}}.
\]
\end{Prop}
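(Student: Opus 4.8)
The plan is to reduce to a local computation and then write down the isomorphism by hand, using the explicit canonical-extension formula together with the fact that $\mathcal{O}_{Y^{\log}}$ literally contains the symbols $\log x_j$. Since the statement is local on $Y^{\log}$, I would work near a boundary point $\bar y \in \rho_Y^{-1}(C)$; setting $y = \rho_Y(\bar y)$ and picking a polydisk $U \ni y$ with $(U, C\cap U) \cong (\mathbf{C}^m, \mathrm{div}(x_1\cdots x_l))$, let $T_1, \dots, T_l$ be the (unipotent) local monodromies of $H_{\mathbf{Z}}$ about the branches of $C$. On the dense open set $\rho_Y^{-1}(Y\setminus C)$ both sides of the claimed isomorphism are canonically $H_{\mathbf{Z}}\otimes\mathcal{O}_{Y\setminus C}$, because $H_{\mathbf{Z}}^{\log}|_{Y\setminus C} = H_{\mathbf{Z}}$ and $\tilde H|_{Y\setminus C} = H$. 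As the stalks of $\mathcal{O}_{Y^{\log}}$ are polynomial rings over the domains $\mathcal{O}_{Y,y}$, both sheaves are torsion-free, so any extension of this tautological isomorphism across $\rho_Y^{-1}(C)$ is unique and the local extensions glue automatically; hence it suffices to produce such an extension near $\bar y$.

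For that I would fix over $U\setminus C$ a branch of a $\mathbf{Z}$-basis $v^{(1)}, \dots, v^{(N)}$ of $H_{\mathbf{Z}}$, so that $\tilde H|_U$ is freely generated over $\mathcal{O}_U$ by the sections $e(v^{(k)}) = \exp\bigl(-\tfrac1{2\pi i}\sum_j (\log T_j)(\log x_j)\bigr) v^{(k)}$ from the definition of the canonical extension, and hence $\rho_Y^{-1}\tilde H \otimes_{\rho_Y^{-1}\mathcal{O}_Y}\mathcal{O}_{Y^{\log}}$ is freely generated over $\mathcal{O}_{Y^{\log}}$ by $\rho_Y^*e(v^{(k)})$ on $\rho_Y^{-1}(U)$. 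The crux is to observe that this same formula already defines a section of $H_{\mathbf{Z}}^{\log}\otimes\mathcal{O}_{Y^{\log}}$ over all of $\rho_Y^{-1}(U)$: here $\log x_j$ is the symbol in $\mathcal{O}_{Y^{\log}}$, single-valued on $\rho_Y^{-1}(U)$ apart from its additive monodromy $\log x_j \mapsto \log x_j + 2\pi i$ about the $j$-th circle, and $v^{(k)}$ is a multivalued flat section of $H_{\mathbf{Z}}^{\log}$ (which is defined on $\rho_Y^{-1}(U)$ since $\rho_Y^{-1}(U\setminus C)\hookrightarrow\rho_Y^{-1}(U)$ is a homotopy equivalence). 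Going once around the $j$-th circle multiplies $v^{(k)}$ by $T_j$ while multiplying the exponential factor on the right by $\exp(-\log T_j)=T_j^{-1}$; since the $\log T_j$ commute, the two cancel, so $\rho_Y^*e(v^{(k)})$ is a genuine global section over $\rho_Y^{-1}(U)$ restricting to the expected one on the open part.

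Finally I would check that $\rho_Y^*e(v^{(1)}), \dots, \rho_Y^*e(v^{(N)})$ form an $\mathcal{O}_{Y^{\log}}$-frame of $H_{\mathbf{Z}}^{\log}\otimes\mathcal{O}_{Y^{\log}}$, on stalks: at $\bar z \in \rho_Y^{-1}(U)$ over a point on the branches $x_{j_1},\dots,x_{j_p}$ of $C$, the stalk $\mathcal{O}_{Y^{\log},\bar z}$ is the polynomial ring $\mathcal{O}_{Y,z}[\log x_{j_1}, \dots, \log x_{j_p}]$, a local trivialization of $H_{\mathbf{Z}}^{\log}$ gives a frame, and the base change to $(\rho_Y^*e(v^{(k)}))_k$ is effected by the matrix $\exp(-\tfrac1{2\pi i}\sum_j(\log T_j)(\log x_j))$, which has polynomial entries and determinant $1$ (being the exponential of a matrix that is nilpotent, the $\log T_j$ being commuting nilpotents), hence is invertible over that ring. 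This gives the local isomorphism and, by the uniqueness remark, the global one. I expect the only real obstacle to be a matter of care rather than difficulty: one has to take the structure sheaf $\mathcal{O}_{Y^{\log}}$ and the monodromy of the symbols $\log x_j$ seriously enough to be certain that the exponential twist of the canonical extension is an honest automorphism of a locally free sheaf on $\rho_Y^{-1}(U)$ rather than a formal manipulation; granted that, everything else is routine.
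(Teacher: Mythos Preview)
The paper states this proposition without proof, treating it as an immediate consequence of the explicit canonical-extension formula displayed just before it; the only hint given is the remark that the expressions $\exp(-\tfrac{1}{2\pi i}\sum_j \log T_j \log x_j)v$ are single-valued because the monodromy transformations cancel. Your proof is correct and simply fleshes out what the paper leaves implicit: you make the same observation that these generators, read with $\log x_j$ as the symbols in $\mathcal{O}_{Y^{\log}}$, are honest sections of $H_{\mathbf{Z}}^{\log}\otimes\mathcal{O}_{Y^{\log}}$, and you add the verification (via the unipotence of the transition matrix) that they form a frame, together with the torsion-freeness argument that makes the local isomorphisms glue. There is no substantive difference in approach, only in the level of detail supplied.
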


%%%%%%%%%%%%%%%%%%%%%%%%%%%%%%%%%%%%%%%%%%%%%%%%%%%%%
\section{Existence of a cohomological mixed Hodge complex}

We shall prove the existence of a 
cohomological mixed Hodge complex
on a singular fiber of a weakly semistable algebraic fiber space.

\begin{Thm}
Let $f: (X,B) \to (Y,C)$ be a weakly semistable algebraic fiber space,
i.e., a projective surjective toroidal morphism from a
quasi-smooth toroidal variety to a smooth toroidal variety having
connected, reduced and equi-dimensional geometric fibers.
Let $f^{\log}: X^{\log} \to Y^{\log}$ be the induced continuous map between
the associated log spaces $\rho_X: X^{\log} \to X$ and 
$\rho_Y: Y^{\log} \to Y$.
Let $y \in Y$ be an arbitrary point, $\bar y \in \rho_Y^{-1}(y)$, 
$E = f^{-1}(y)$ and $D = (f^{\log})^{-1}(\bar y)$.
Denote $\rho_D = \rho_X \vert_D$.
Then the following data is a cohomological mixed Hodge complex on $E$.

(1) $H_{\mathbf{Z}} = R\rho_{D*}\mathbf{Z}_D$.

(2) $H_{\mathbf{C}} = \Omega^{\bullet}_{E/\mathbf{C}}(\log)$.

(3) A weight filtration $W_q(R\rho_{D*}\mathbf{Q}_D)$ 
on $H_{\mathbf{Q}}$ is defined as a convolution of the 
following complex of objects:
\[
\tau_{\le q}(R\rho_{D*}\mathbf{Q}_{D^{[0]}}) \to   
\tau_{\le q+1}(R\rho_{D*}\mathbf{Q}_{D^{[1]}}) \to   
\tau_{\le q+2}(R\rho_{D*}\mathbf{Q}_{D^{[2]}}) \to \dots  
\]
where $\tau$ denotes the canonical filtration.

(4) A weight filtration $W_q(\Omega^{\bullet}_{E/\mathbf{C}}(\log))$ 
on $H_{\mathbf{C}}$ is defined as a convolution of the 
following complex of objects:
\[
\begin{split}
&W_q(\Omega^{\bullet}_{E/\mathbf{C}}(\log) 
\otimes_{\mathcal{O}_E} \mathcal{O}_{E^{[0]}}) \to 
W_{q+1}(\Omega^{\bullet}_{E/\mathbf{C}}(\log) 
\otimes_{\mathcal{O}_E} \mathcal{O}_{E^{[1]}}) \\
&\to W_{q+2}(\Omega^{\bullet}_{E/\mathbf{C}}(\log) 
\otimes_{\mathcal{O}_E} \mathcal{O}_{E^{[2]}}) \to \dots
\end{split}
\]
where $W$ denotes the filtration determined by the order of 
log poles.

(5) A Hodge filtration on $H_{\mathbf{C}}$ is the stupid filtration:
\[
F^p(\Omega^{\bullet}_{E/\mathbf{C}}(\log))
= \Omega^{\ge p}_{E/\mathbf{C}}(\log).
\]
\end{Thm}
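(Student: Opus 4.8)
The plan is to verify, piece by piece, that the quadruple $(H_{\mathbf{Z}}, H_{\mathbf{C}}, W, F)$ satisfies the axioms of a cohomological mixed Hodge complex on $E$, reducing everything to the smooth-projective normal-crossing case via the residue/stratification description of the graded pieces. First I would establish the comparison isomorphism in axiom (1): we have the Poincaré lemma $R\rho_{D*}\mathbf{C}_D \cong \Omega^{\bullet}_{E/\mathbf{C}}(\log)$ from Corollary (Poincaré lemma 3), so after tensoring the $\mathbf{Q}$-level with $\mathbf{C}$ it suffices to identify the two weight filtrations. On the topological side $W_q$ is the convolution of $\tau_{\le q+t}(R\rho_{D*}\mathbf{Q}_{D^{[t]}})$, and on the De Rham side $W_q$ is the convolution of the order-of-log-poles filtrations on $\Omega^{\bullet}_{E/\mathbf{C}}(\log)\otimes\mathcal{O}_{E^{[t]}}$; since the residue maps identify $\mathrm{Gr}^W$ of the log-pole filtration with shifted de Rham complexes on deeper strata (exactly as in the Example with $B^{[t]}$), and the canonical filtration $\tau$ on $R\rho_{D*}\mathbf{Q}_{D^{[t]}}$ has graded pieces the locally constant cohomology sheaves $R^{q+t}(\rho_X)_*\mathbf{Q}_{D^{[t]}}[-q-t]$, the two convolutions have isomorphic associated gradeds, and compatibility of the residue maps with convolution gives the filtered quasi-isomorphism.

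The heart of the proof is axiom (2): showing that for each $n$, the triple $\mathrm{Gr}^W_n$ of the data is a cohomological Hodge $\mathbf{Q}$-complex of weight $n$ on $E$. Here I would compute $\mathrm{Gr}^W_n$ of the convolution. Because convolution interleaves with the canonical/log-pole filtrations, $\mathrm{Gr}^W_n$ of the $\mathbf{Q}$-level complex decomposes as a direct sum, indexed by $t\ge 0$ and by the number $\ell$ of components cutting out a codimension-$t$ stratum, of terms of the form $(\text{constant sheaf on }E^{[t]})[-n-2t]$ tensored with the combinatorial data of the nerve — concretely, each summand is a Tate twist of $\mathbf{Q}_{E^{[t]}}[-n-2t]$, matching on the $\mathbf{C}$-side $\Omega^{\bullet}_{E^{[t]}/\mathbf{C}}(\log G^{[t]})$-type complexes with an appropriately shifted stupid filtration $F(-t)$. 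The key input is that each $E^{[t]}$ (with its induced boundary) is itself a quasi-smooth toroidal variety, so the previously-established Example (the normal-crossing $Ri_*$ case, valid also in the quasi-smooth setting by the abelian-quotient covering trick used to build $X^{\log}$) tells us that these building blocks are genuine cohomological Hodge complexes; the weight bookkeeping — the shift $[-n-2t]$ combined with the Tate twist contributing $2t$, and the $F$-shift counted twice as in the displayed Example — must come out to exactly $n$.

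After that the remaining axioms are formal. Axiom (3), that $F$ induces Hodge structures of the right weight on $R\Gamma(E,-)$ and the Hodge-to-de Rham spectral sequence degenerates at $E_1$, follows from the fact that $\mathrm{Gr}^W$ is a cohomological Hodge complex together with Theorem (Deligne-II) already quoted; and the $E_1$-degeneration we also get directly from the degeneration of the relative Hodge-to-de Rham spectral sequence $R^qf^o_*\Omega^p_{X/Y}(\log)\Rightarrow R^{p+q}f^o_*(f^o)^{-1}\mathcal{O}_{Y\setminus C}$ restricted to the fiber, combined with the weight spectral sequence. I expect the main obstacle to be the careful unwinding of the double convolution (over strata, and over log-pole order within each stratum) so that $\mathrm{Gr}^W_n$ really is a direct sum of shifted Hodge complexes of weight $n$ with no cross terms — in other words, checking that the weight filtration coming from the Mayer–Vietoris/nerve direction and the weight filtration coming from log poles assemble into a single filtration whose gradeds are pure of the correct weight. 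This is exactly the point where one must be careful that the toroidal (quasi-smooth, reduced-fiber) hypotheses of weak semistability are used: reducedness of the fibers guarantees the strata $E^{[t]}$ are reduced and that the residue isomorphisms take the clean form above, and quasi-smoothness lets us pull back to a smooth toroidal cover where Deligne's normal-crossing computation applies verbatim before descending by the finite group action.
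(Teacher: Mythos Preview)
Your overall strategy matches the paper's --- verify the comparison via the Poincar\'e lemma, then identify each $\mathrm{Gr}^W_q$ as a direct sum of shifted cohomological Hodge complexes supported on strata --- but your description of those graded pieces has a gap that prevents the argument from closing. You assert that the summands are, on the $\mathbf{Q}$-side, constant sheaves on $E^{[t]}$ shifted by $[-n-2t]$, and on the $\mathbf{C}$-side, $\Omega^{\bullet}_{E^{[t]}}(\log G^{[t]})$-type complexes with $F(-t)$, and that the normal-crossing Example then certifies these as cohomological Hodge complexes. Neither identification is correct. The restriction $\Omega^{\bullet}_{E/\mathbf{C}}(\log)\otimes_{\mathcal{O}_E}\mathcal{O}_{E_I}$ is \emph{not} $\Omega^{\bullet}_{E_I}(\log G_I)$: there is an exact sequence
\[
0 \to \Omega^1_{E_I}(\log G_I) \to \Omega^1_{E/\mathbf{C}}(\log)\otimes_{\mathcal{O}_E}\mathcal{O}_{E_I} \to \mathcal{O}_{E_I}^{\,t} \to 0,
\]
the quotient recording the $t$ residues in the normal directions of $E_I$ inside $E$. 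These extra $\frac{dx_j}{x_j}$ generators contribute to the log-pole order, so in $\mathrm{Gr}^W_{q+t}$ one must allow $t'$ of the $q+t$ residues to go in the normal directions ($0\le t'\le t$) and the remaining $s=q+t-t'$ along $G_I$; the resulting building block is $\binom{t}{t'}\,\Omega^{\bullet}_{G^{[s]}}$ on a \emph{smooth projective} sub-stratum $G^{[s]}\subset E_I$, not a log de~Rham complex on $E^{[t]}$. Correspondingly, on the $\mathbf{Q}$-side the summand is $R^{q+t}(\rho_X)_*\mathbf{Q}_{D^{[t]}}[-q-2t]$, which is not a constant sheaf on $E^{[t]}$ but must itself be unwound using the $(S^1)^t$-bundle structure of $D_I\to E_I^{\log}$. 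And the Example you invoke yields only a cohomological \emph{mixed} Hodge complex on $(E^{[t]},G^{[t]})$, so citing it does not directly verify axiom~(2).

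Once the normal-direction residues are inserted, the weight bookkeeping also changes: the shift is $[-q-2t]$ together with $F(-(q+t))$, giving weight $-(q+2t)+2(q+t)=q$, whereas your $F(-t)$ with a separate ``Tate twist contributing $2t$'' does not balance. The missing ingredient, then, is precisely the exact sequence above and the parameter $t'$; with that in hand the rest of your outline (double convolution, reduction to the smooth projective case, Deligne's machinery) goes through and coincides with the paper's proof.
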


\begin{proof}
By the Poincar\'e lemma, we have 
$H_{\mathbf{Z}} \otimes \mathbf{C} \cong H_{\mathbf{C}}$, and
\[
\begin{split}
&\text{Gr}^W_q(H_{\mathbf{Q}}) \cong \bigoplus_{t \ge 0}
R^{q+t}\rho_{D*}\mathbf{Q}_{D^{[t]}}[-q-2t] \\
&\text{Gr}^W_q(H_{\mathbf{C}}) \cong \bigoplus_{t \ge 0}
\text{Gr}^W_{q+t}(\Omega^{\bullet}_{E/\mathbf{C}}(\log)
\otimes_{\mathcal{O}_E} \mathcal{O}_{E^{[t]}})[-t] \\
&\text{Gr}^W_{q+t}(\Omega^{\bullet}_{E/\mathbf{C}}(\log)
\otimes_{\mathcal{O}_E} \mathcal{O}_{E^{[t]}})[-t]
\cong R^{q+t}\rho_{D*}\mathbf{C}_{D_I}[-q-2t].
\end{split}
\]

We shall prove that 
\[
(\text{Gr}^W_{q+t}(\Omega^{\bullet}_{E/\mathbf{C}}(\log)
\otimes_{\mathcal{O}_E} \mathcal{O}_{E^{[t]}})[-t],F)
\]
is a cohomological mixed Hodge complex of weight $q$ on $E$.

We have an exact sequence
\[
0 \to \Omega^1_{E_I}(\log G_I) \to 
\Omega^1_{E/\mathbf{C}}(\log) \otimes_{\mathcal{O}_E} \mathcal{O}_{E_I}
\to \mathcal{O}_{E_I}^t \to 0
\]
where the last arrow is given by the residue homomorphisms
along the normal directions of $E_I$ in $E$. 

Let $G_J$ be a closed stratum on $E_I$, an irreducible component
of the intersection of some of the irreducible components of $G_I$.
Let $s = \text{codim}_{E_I} G_J$.
When we derive a differential form on $G_J$ from a section of 
$\Omega^{\bullet}_{E/\mathbf{C}}(\log) \otimes_{\mathcal{O}_E} 
\mathcal{O}_{E_I}$,
we take residues $s$ times in the normal 
directions of $G_J$ in $E_I$ and $t'$ times in the directions
of $\mathcal{O}_{E_I}^t$ of the above exact sequence, where
we have $0 \le t' \le t$.
In order to obtain the graded piece of degree $q+t$, we have to take
residues $q+t$ times.
Thus we have $q+t=s+t'$.
The degree of the differential form drops also by $q+t$.
Therefore we obtain
\[
(\text{Gr}^W_{q+t}(\Omega^{\bullet}_{E/\mathbf{C}}(\log)
\otimes_{\mathcal{O}_E} \mathcal{O}_{E^{[t]}})[-t],F)
\cong (\binom{t}{t'}\Omega^{\bullet}_{G^{[s]}}[-t-q-t],F(-q-t)).
\]
Since
\[
-t-q-t + 2(q+t) = q
\]
the last term is a cohomological Hodge complex of weight $q$.
\end{proof}

The formula obtained in the above proof $q = s + t' - t$
says that the shift of the weight along the degeneration of fibers 
can be in both positive and negative directions, while the shift of the
cohomology degree $s+t'+t$ is in one direction.

The following is an immediate corollary:

\begin{Cor}
Assume the conditions of the theorem. 
Then the following hold:

(1) The spectral sequence associated to the Hodge filtration
\[
{}_FE_1^{p,q} = H^q(E,\Omega^p_{E/\mathbf{C}}(\log)) \Rightarrow
H^{p+q}(D, \mathbf{C})
\]
degenerates at $E_1$.

(2) The spectral sequence associated to the weight filtration
\[
{}_WE_1^{p,q} = H^{p+q}(E,\text{Gr}^W_{-p}(R\rho_{D*}\mathbf{Q}_D)) 
\Rightarrow H^{p+q}(D,\mathbf{Q})
\]
degenerates at $E_2$.
\end{Cor}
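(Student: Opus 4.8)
The plan is to derive this corollary directly from the preceding theorem, which asserts that the quadruple $(R\rho_{D*}\mathbf{Z}_D, \Omega^{\bullet}_{E/\mathbf{C}}(\log), W, F)$ is a cohomological mixed Hodge complex on $E$, together with the general machinery of \S4. First I would recall that for any cohomological mixed Hodge complex $H$ on a topological space $X$, the direct image complex $R\Gamma(X,H)$ is a mixed Hodge complex (as stated in \S4), and then invoke the degeneration theorem of Deligne (the last theorem of \S4): the Hodge spectral sequence of an MHC degenerates at $E_1$ and the weight spectral sequence degenerates at $E_2$. Applying this with $X = E$ and $H$ the CoMHC produced by the theorem immediately yields both statements, provided one identifies the $E_1$-terms and abutments correctly.

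The key identifications are as follows. For (1): the abutment is $H^{p+q}(R\Gamma(E,H_{\mathbf{C}})) = \mathbf{H}^{p+q}(E,\Omega^{\bullet}_{E/\mathbf{C}}(\log))$, which by Poincar\'e lemma 3 (the corollary in \S7) equals $\mathbf{H}^{p+q}(E, R\rho_{D*}\mathbf{C}_D) = H^{p+q}(D,\mathbf{C})$. Since the Hodge filtration is the stupid filtration $F^p = \Omega^{\ge p}_{E/\mathbf{C}}(\log)$, one has $\mathrm{Gr}_F^p(H_{\mathbf{C}}) = \Omega^p_{E/\mathbf{C}}(\log)[-p]$, so the $E_1$-term is $H^{p+q}(E,\Omega^p_{E/\mathbf{C}}(\log)[-p]) = H^q(E,\Omega^p_{E/\mathbf{C}}(\log))$, exactly as stated. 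For (2): the abutment is $\mathbf{H}^{p+q}(E, R\rho_{D*}\mathbf{Q}_D) = H^{p+q}(D,\mathbf{Q})$, and the $E_1$-term of the weight spectral sequence of the MHC $R\Gamma(E,H)$ is $H^{p+q}(E,\mathrm{Gr}^W_{-p}(R\rho_{D*}\mathbf{Q}_D))$ by definition of the induced weight filtration $W[k]$ on cohomology — one checks that $\mathrm{Gr}^W_{-p}$ of the complex-level filtration, after taking $R\Gamma$ and re-indexing, produces precisely this hypercohomology group.

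Since both degenerations are already packaged in the final theorem of \S4, the only real content is bookkeeping: verifying that the CoMHC structure of the theorem has exactly the Hodge and weight filtrations whose graded pieces and abutments match the displayed formulas, and invoking Poincar\'e lemma 3 to rewrite $\mathbf{H}^*(E,\Omega^{\bullet}_{E/\mathbf{C}}(\log))$ and $\mathbf{H}^*(E,R\rho_{D*}\mathbf{Q}_D)$ as $H^*(D,-)$. I anticipate no genuine obstacle; the mild subtlety is keeping the sign and shift conventions consistent (the theorem's weight filtration is built from a convolution of truncated complexes with shifts $\tau_{\le q+t}(\cdot)[-2t]$-type terms, so one must confirm the re-indexing $-p$ in $\mathrm{Gr}^W_{-p}$ agrees with Deligne's convention for the weight spectral sequence of an MHC). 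Once the dictionary between the two index conventions is fixed, the corollary follows formally, and I would state it simply as: "Apply the preceding theorem together with the degeneration theorem for mixed Hodge complexes, using Poincar\'e lemma 3 to identify the hypercohomology of $E$ with the cohomology of $D$."
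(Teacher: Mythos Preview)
Your approach is exactly the one the paper intends: the corollary is stated as ``immediate'' with no separate proof, and the implicit argument is precisely to combine the CoMHC structure just established with Deligne's degeneration theorem from \S4, using Poincar\'e lemma 3 for the identification of the abutments. Your bookkeeping of the $E_1$-terms and abutments is correct and nothing further is needed.
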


The upper semi-continuity theorem yields the local freeness theorem:

\begin{Cor}
Assume the conditions of the theorem. 
Then the following hold:

(1) The higher direct image sheaves $R^qf_*\Omega_{X/Y}^p(\log)$ on $Y$ 
are locally free for all $p,q$.

(2) Let $\tilde H^k$ be the canonical extension of the 
higher direct image sheaf
$R^kf^o_*\mathbf{C}_{X \setminus B} 
\otimes_{\mathbf{C}_{Y \setminus C}} \mathcal{O}_{Y \setminus C}$ 
for any integer $k$.
Then there is an isomorphism 
\[
\tilde H^k \cong R^kf_*\Omega_{X/Y}^{\bullet}(\log).
\]
Moreover 
\[
F^p(\tilde H^k) \cong R^kf_*\Omega_{X/Y}^{\ge p}(\log)
\]
gives an 
increasing filtration by locally free subsheaves of the canonical extension.
\end{Cor}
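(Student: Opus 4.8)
The plan is to reduce both assertions to Grauert's base-change and upper semi-continuity theorem, feeding in the fiberwise $E_1$-degeneration of the preceding corollary and the topological local triviality of $f^{\log}$ from the corollary of \S2.

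First I would record the flatness facts. Since $(X,B)$ is toroidal, $\Omega^1_X(\log B)$ is locally free, hence so is $\Omega^1_{X/Y}(\log)=\Omega^1_X(\log B)/f^*\Omega^1_Y(\log C)$ and each $\Omega^p_{X/Y}(\log)=\bigwedge^p\Omega^1_{X/Y}(\log)$; moreover $f$ is flat (an equi-dimensional morphism from a Cohen--Macaulay variety to a smooth one), so $\Omega^p_{X/Y}(\log)$ is flat over $Y$ and its restriction to a fiber $E=f^{-1}(y)$ is $\Omega^p_{E/\mathbf{C}}(\log)=\Omega^p_{X/Y}(\log)\otimes\mathcal{O}_E$, with no higher $\mathrm{Tor}$. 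Thus $H^q(E,\Omega^p_{E/\mathbf{C}}(\log))$ is the genuine fiber of $R^qf_*\Omega^p_{X/Y}(\log)$ in the sense of cohomology and base change. By part (1) of the preceding corollary the Hodge-to-de Rham spectral sequence on $E$ degenerates at $E_1$, so $\sum_{p+q=k}\dim H^q(E,\Omega^p_{E/\mathbf{C}}(\log))=\dim H^k(D,\mathbf{C})$ where $D=(f^{\log})^{-1}(\bar y)$; and since the sheaves $R^kf^{\log}_*\mathbf{Z}_{X^{\log}}$ are locally constant on $Y^{\log}$ while $\rho_Y$ is proper with connected fibers, $\dim H^k(D,\mathbf{C})$ is locally constant on $Y$, so we may work over a connected neighborhood on which it equals a constant $b_k=\sum_{p+q=k}a^{p,q}$, where $a^{p,q}$ are the Hodge numbers of a general fiber.

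Now Grauert's theorem says that $y\mapsto\dim H^q(E_y,\Omega^p_{E_y/\mathbf{C}}(\log))$ is upper semi-continuous, hence is $\ge a^{p,q}$ everywhere; summing over $p+q=k$ and comparing with the constant $b_k$ forces every one of these inequalities to be an equality, so each such function is constant, and Grauert's theorem then gives that $R^qf_*\Omega^p_{X/Y}(\log)$ is locally free and that its formation commutes with base change. This is (1). For (2), local freeness together with the fiberwise $E_1$-degeneration makes the relative Hodge-to-de Rham spectral sequence $E_1^{p,q}=R^qf_*\Omega^p_{X/Y}(\log)\Rightarrow R^{p+q}f_*\Omega^\bullet_{X/Y}(\log)$ degenerate at $E_1$ over all of $Y$ (a differential is a map of locally free sheaves that vanishes on every fiber of the reduced variety $Y$, hence is zero), so $R^kf_*\Omega^\bullet_{X/Y}(\log)$ is locally free and, from the short exact sequences of the stupid filtration, $R^kf_*\Omega^{\ge p}_{X/Y}(\log)$ is a locally free subsheaf with locally free quotient. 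It then remains to identify $R^kf_*\Omega^\bullet_{X/Y}(\log)$ with the canonical extension $\tilde H^k$: over $Y\setminus C$ it is the de Rham sheaf $R^kf^o_*\mathbf{C}_{X\setminus B}\otimes\mathcal{O}_{Y\setminus C}$; applying $Rf_*$ to the filtration of $\Omega^\bullet_X(\log B)$ by the powers of $f^*\Omega^1_Y(\log C)$ equips it with an integrable connection extending the Gauss-Manin connection and having at worst logarithmic poles along $C$, whose residues are nilpotent because the local monodromies are unipotent (corollary of \S6). By the uniqueness of Deligne's canonical extension this sheaf is $\tilde H^k$; and since both $R^kf_*\Omega^{\ge p}_{X/Y}(\log)$ and $F^p(\tilde H^k)$ are subbundles of it restricting to the Hodge filtration over $Y\setminus C$, they agree by normality of $Y$. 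This proves (2).

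The genuinely substantive input --- the cohomological mixed Hodge complex and the $E_1$-degeneration on the singular fibers --- is already in hand, so the remaining work is mostly formal; the one step I expect to need real care is the identification with the canonical extension, specifically checking that the logarithmic connection produced by pushing forward the filtration of $\Omega^\bullet_X(\log B)$ has residues equal to $-\frac{1}{2\pi i}\log T_j$ for the unipotent local monodromy operators $T_j$, so that Deligne's characterization applies verbatim. A secondary bookkeeping point is confirming that $\Omega^p_{X/Y}(\log)$ is $Y$-flat with fiber $\Omega^p_{E/\mathbf{C}}(\log)$, which is where the weak-semistability hypotheses (reduced, equi-dimensional fibers) are really used.
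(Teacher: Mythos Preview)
Your argument is correct. For part (1) you do exactly what the paper does---the paper's one-line proof (``the rank of $\bigoplus_{p+q=k}H^q(E,\Omega^p_{E/\mathbf{C}}(\log))$ is independent of $y$, hence upper semi-continuity'') is precisely the mechanism you spell out in detail, including the flatness bookkeeping the paper leaves implicit.

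For part (2) you take a genuinely different route. You follow the classical Katz--Oda/Steenbrink path: push forward the Koszul filtration on $\Omega^\bullet_X(\log B)$ to build a logarithmic connection on $R^kf_*\Omega^\bullet_{X/Y}(\log)$, verify its residues are nilpotent via the unipotent-monodromy corollary of \S6, and then invoke the uniqueness of Deligne's canonical extension. The paper instead stays inside its log-space formalism: it pulls both $\tilde H^k$ and $R^kf_*\Omega^\bullet_{X/Y}(\log)$ back to $Y^{\log}$, tensors with $\mathcal{O}_{Y^{\log}}$, and identifies each with $R^kf^{\log}_*\mathbf{C}_{X^{\log}}\otimes\mathcal{O}_{Y^{\log}}$ using the Proposition at the end of \S7 (the log-space description of the canonical extension) together with the relative Poincar\'e lemma; faithful flatness of $\mathcal{O}_{Y^{\log}}$ over $\rho_Y^{-1}\mathcal{O}_Y$ then descends the isomorphism to $Y$. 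Your approach is self-contained and portable but, as you note, requires the residue computation as a separate input; the paper's approach avoids that computation entirely by having already encoded the canonical extension in the structure sheaf $\mathcal{O}_{Y^{\log}}$, which is the payoff of the machinery built in \S7.
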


\begin{proof}
(1) The rank of the cohomology groups 
$\bigoplus_{p+q=k} H^q(E,\Omega_{E/\mathbf{C}}^p(\log))$ for $E = f^{-1}(y)$ 
is independent of $y \in Y$.
Hence the assertion follows from the upper semi-continuity theorem.

(2) We have
\[
\begin{split}
&\rho_Y^{-1}\tilde H^k \otimes_{\rho_Y^{-1}\mathcal{O}_Y} 
\mathcal{O}_{Y^{\log}} 
\cong R^kf^{\log}_*\mathbf{C}_{X^{\log}}
\otimes \mathcal{O}_{Y^{\log}} \\
&\cong \rho_Y^{-1}R^kf_*\Omega_{X/Y}^{\bullet}(\log)
\otimes_{\rho_Y^{-1}\mathcal{O}_Y} \mathcal{O}_{Y^{\log}} \\
&\rho_Y^{-1}F^p(\tilde H^k) \otimes_{\rho_Y^{-1}\mathcal{O}_Y} 
\mathcal{O}_{Y^{\log}} 
\cong F^p(R^kf^{\log}_*\mathbf{C}_{X^{\log}}
\otimes \mathcal{O}_{Y^{\log}}) \\
&\cong \rho_Y^{-1}F^p(R^kf_*\Omega_{X/Y}^{\bullet}(\log))
\otimes_{\rho_Y^{-1}\mathcal{O}_Y} \mathcal{O}_{Y^{\log}}
\end{split}
\]
hence the result.
\end{proof}

The above assertion can be derived from the nilpotent orbit theorem
(\cite{Schmid}).
Our proof is more geometric.

%%%%%%%%%%%%%%%%%%%%%%%%%%%%%%%%%%%%%%%%%%%%%%%%%%%%%
\section{Semipositivity theorem}

A locally free sheaf $F$ on a proper algebraic variety $Y$ is said to be
{\em numerically semipositive} if the tautological invertible sheaf 
$\mathcal{O}_{\mathbf{P}(F)}(1)$ on the associated projective space bundle 
is nef.
It is equivalent to saying that, for all morphisms $\phi: \Gamma \to Y$ from a 
smooth projective curve $\Gamma$ and for all quotient invertible sheaves 
$G$ of the inverse images $\phi^*F$, the inequalities 
$\text{deg}_{\Gamma}(G) \ge 0$ hold.

\begin{Thm}\label{SP}
Let $f: (X,B) \to (Y,C)$ be a well prepared algebraic fiber space of 
relative dimension $n$, i.e., a projective surjective toroidal morphism from a
quasi-smooth toroidal variety to a smooth toroidal variety having 
connected and equi-dimensional geometric fibers.
Then the sheaves $f_*\Omega_{X/Y}^k(\log)$ 
and $R^kf_*\Omega_{X/Y}^n(\log)$ are numerically 
semipositive locally free sheaves 
for all non-negative integers $k$.
\end{Thm}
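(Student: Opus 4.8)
The plan is to reduce everything to the Griffiths positivity theorem (Theorem of Griffiths stated in \S5) applied to the variations of polarized Hodge structures coming from the primitive cohomology of the smooth fibers, and to control the behaviour of the Hodge bundles near the boundary $C$ using the canonical extension. First I would treat the weakly semistable case with no horizontal boundary components (i.e.\ $B = f^{-1}(C)$), following \cite{AB}. In that case, over the open locus $Y \setminus C$ we have the variation of Hodge structures $R^kf^o_*\mathbf{C}_{X\setminus B}$ of weight $k$, which splits into a direct sum of polarized VHS by the Lefschetz decomposition (Example in \S4) and semi-simplicity (Theorem~\ref{SS}). For each primitive summand, the bottom piece $F^k$ of the Hodge filtration is a line-bundle-like sub-object whose Gauss--Manin-induced connection has positive semi-definite curvature by the Griffiths positivity theorem, hence $F^k$ of each primitive VHS is numerically semipositive on $Y\setminus C$; the pieces $f_*\Omega^k_{X/Y}(\log)$ and $R^kf_*\Omega^n_{X/Y}(\log)$ are assembled from these bottom Hodge pieces via the Hodge filtrations identified in the corollary of \S8, namely $F^p(\tilde H^k)\cong R^kf_*\Omega^{\ge p}_{X/Y}(\log)$.

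The next step is the extension over the boundary. By the local freeness corollary of \S8, the sheaves $R^kf_*\Omega^p_{X/Y}(\log)$ are already the canonical extensions $F^p(\tilde H^k)$, locally free on all of $Y$; so there is no need to take reflexive hulls or saturations — the candidate semipositive sheaf is literally the canonical extension of the Hodge bundle. What remains is to show that the curvature semipositivity of the Hodge metric on $F^k$, valid over $Y\setminus C$, forces numerical semipositivity of the canonical extension on all of $Y$. I would argue curve by curve: for $\phi:\Gamma\to Y$ a smooth projective curve and $G$ an invertible quotient of $\phi^*(F^k\text{-piece})$, the Hodge metric on $\phi^*F^k$ restricted to $\Gamma\setminus\phi^{-1}(C)$ induces a singular metric on $G$ whose curvature current is $\ge 0$; the key point is that because we use the \emph{canonical} extension, the monodromy being unipotent (the Corollary in \S6) implies the Hodge norm of a generating section grows at most like a power of $\log$, so the metric has at worst mild (integrable, of Poincar\'e type) singularities at the punctures and the degree $\deg_\Gamma G$ is bounded below by the total mass of the curvature current, which is $\ge 0$. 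This is the standard norm-estimate argument (compatible with Schmid's nilpotent orbit theorem, though here we have the more elementary description from \S8), and it yields $\deg_\Gamma G \ge 0$, i.e.\ numerical semipositivity.

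Finally I would pass from the weakly semistable case to the well prepared case by the finite flat base change $\pi:Y'\to Y$ of Theorem~\cite{AK}(4). Since $\pi$ is finite and surjective, numerical semipositivity descends: a locally free sheaf $\mathcal{F}$ on $Y$ is numerically semipositive if $\pi^*\mathcal{F}$ is, because any test curve $\Gamma\to Y$ can be lifted after a further finite base change to a curve mapping to $Y'$, and degrees of quotient line bundles only get multiplied by the (positive) degree of the covering. The base-changed family $f':(X',B')\to(Y',C')$ is weakly semistable, and one checks that $\pi^*(f_*\Omega^k_{X/Y}(\log))$ agrees with $f'_*\Omega^k_{X'/Y'}(\log)$ (and similarly for the top-degree higher direct images) by flat base change for the log de Rham complex together with the local freeness from \S8 — this compatibility of the log relative differentials under the normalized fiber product is the one technical point requiring care, since $(X',B')$ is only quasi-smooth and $\pi$ is ramified along $C$, but the ramification is along the boundary so $\Omega^1_{X'/Y'}(\log)$ is still the pullback of $\Omega^1_{X/Y}(\log)$.

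\textbf{Main obstacle.} The hard part will be the boundary estimate in the second paragraph: making precise that the canonical extension of the bottom Hodge piece inherits numerical semipositivity from the Griffiths-positive Hodge metric despite the metric degenerating along $C$. One must verify that the canonical extension is exactly the sheaf of sections with at most logarithmic growth of Hodge norm — this is where unipotency of the monodromy (the Corollary of \S6) and the explicit description of the canonical extension in \S7 are essential — and then that such growth does not produce a negative contribution to the degree on a test curve. Handling the horizontal boundary components adds the wrinkle that the VHS on $Y\setminus C$ is the one attached to $X\setminus B \to Y\setminus C$ (an open variety in the fibers), but the same Hodge-theoretic package from \S8 applies verbatim, so after establishing the no-horizontal-boundary case the extra generality costs essentially nothing.
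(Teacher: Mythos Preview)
Your outline matches the paper's approach for the first and third reductions (Griffiths positivity on the primitive VHS over $Y\setminus C$, and descent along the finite base change $\pi$), but there is a genuine gap in the second step: your curve-by-curve argument tacitly assumes that $\Gamma\setminus\phi^{-1}(C)$ is nonempty. If the test curve $\phi:\Gamma\to Y$ has image \emph{contained} in $C$, then there is no open part on which the Hodge metric is defined, and your singular-metric/curvature-current computation has nothing to integrate. The paper treats this case separately: one restricts to the smallest closed stratum $C_I\subset C$ containing $\phi(\Gamma)$, and observes that $H_t\otimes\mathcal{O}_{C_I^o}$ carries the weight filtration coming from the variation of \emph{mixed} Hodge structures constructed in \S6--\S8 on the log fibers over $C_I^{o\log}$. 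One then chooses the smallest $q$ with $W_q(F_{t,I})\to G$ nonzero, so the map factors through $\mathrm{Gr}^W_q(F_{t,I})$; this graded piece is a direct sum of (canonical extensions of) top Hodge pieces of \emph{pure} polarized VHS coming from the strata of the singular fibers, and the ``generic curve'' argument now applies to each summand. Without this weight-filtration step the proof is incomplete whenever $\dim Y>1$.

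Two smaller points. First, your claim that ``the ramification is along the boundary'' is false: the paper explicitly notes that the discriminant of $\pi_Y$ properly contains $C$, and the isomorphism $\pi_X^*\Omega^p_{X/Y}(\log)\cong\Omega^p_{X'/Y'}(\log)$ is instead checked away from a codimension-$2$ locus (see the first Lemma in the proof). Second, the horizontal-boundary case does \emph{not} ``cost essentially nothing'': the paper defers it to the end of \S10 and uses the $E_2$-degeneration of the weight spectral sequences (\ref{horizontal1}), (\ref{horizontal2}) along $B^h$ to filter $F$ so that $\mathrm{Gr}^W(F)$ is built from top Hodge pieces of the no-horizontal-boundary families $f_I:(B^h_I,G_I)\to(Y,C)$, to which the already-proved case applies. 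Your sketch conflates this with the open-fiber VHS argument, but the mechanism is the same stratification-by-weight idea needed for the gap above.
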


If $f: (X,B) \to (Y,C)$ is weakly semistable, 
i.e., if all the geometric fibers are reduced, then
$\Omega_{X/Y}^n(\log) = \omega_{X/Y}(B^h)$
is the relative dualising sheaf twisted by the horizontal part of the 
boundary $B$.
But the both sides are different in the general case.

\begin{proof}
At first, we consider the case where $f$ is weakly semistable and there is no
horizontal component of the boundary, i.e., $B = f^{-1}(C)$.
The following proof is essentially in \cite{AB}.

Let $H_t = R^tf_*\Omega_{X/Y}^{\bullet}(\log)$. 
It is a locally free sheaf on $Y$ which is the canonical 
extension of a variation of Hodge structures on $Y \setminus C$.
We denote $F_{n-k} = f_*\Omega_{X/Y}^{n-k}(\log)$ and 
$F_{n+k} = R^kf_*\Omega_{X/Y}^n(\log)$.
They are locally free subsheaves of the $H_t$ which coincide with
the canonical extensions of the highest Hodge filtration 
for $t=n-k$ or $t=n+k$.

Let $L$ be the cohomology class of an ample divisor on $X$.
By the Lefschetz theorem, we have an isomorphism
$L^k: H_{n-k} \to H_{n+k}$ which sends $F_{n-k}$ to $F_{n+k}$.
The primitive part 
\[
{}_0H_{n-k} = \text{Ker}(L^{k+1}: H_{n-k} \to H_{n+k+2})
\]
is the canonical extension of a polarized variation of Hodge structures on
$Y \setminus C$.
We note that $F_{n-k}$ is contained in ${}_0H_{n-k}$.

We shall prove that $F_t$ for $t = n - k$ is semipositive. 
Let $\phi: \Gamma \to Y$ and $G$ as above.
We assume first that the image of $\phi$ is general in the sense that
$\Gamma^o = \phi^{-1}(Y \setminus C) \ne \emptyset$.
The bilinear form $Q(u,v) = \int u \cup v \cup L^k$ induces a 
positive definite hermitian metric $h_F$ on $F_t \vert_{Y \setminus C}$.
Let $h_G$ be the induced hermitian metric on $G \vert_{\Gamma^o}$.
By the Griffiths semipositivity, the curvature form of $h_Q$ is 
semi-positive.

We regard $h_G$ as a hermitian metric on the whole space $\Gamma$ 
with singularities along the boundary $\Gamma - \Gamma^o$, namely a 
{\em singular hermitian metric}.
It can be written locally near a point $y \in \Gamma - \Gamma^o$ in a form
$h_G = e^gh_0$ for a $C^{\infty}$ hermitian metric $h_0$ 
and a locally integrable function $g$.
Then we can express the curvature current $\Theta_G$ of $h_G$ in a form
$\Theta_G = \Theta_0 + \bar{\partial} \partial g$, where $\Theta_0$ is 
the curvature from of $h_0$.
The presentation of the canonical extension implies that the function
$g$ is the logarithm of a function which is expressed by 
$C^{\infty}$ functions and logarithmic functions.
Therefore the boundary contribution due to the singularities of $h_G$ 
vanishes, and we have 
\[
\text{deg}_{\Gamma}(G) = \int_{\Gamma_0} \frac i{2\pi} \Theta_G \ge 0.
\]

Next we consider the case where $\phi(\Gamma)$ is contained in
the boundary $C$.
Let $C_I$ be the irreducible component of the intersection of 
some of the irreducible components of $C$ such that 
the image of $\phi$ is contained in $C_I$ and general in the sense that
$\Gamma^o = \phi^{-1}(C_I^o) \ne \emptyset$
where we set $C_I^o = C_I \setminus \bar C^{[s+1]}$ 
for the image $\bar C^{[s+1]}$ 
of $C^{[s+1]}$ in $C_I$ with $s = \text{codim}_YC_I$.

For $y \in C_I^o$ and $\bar y \in \rho_Y^{-1}(y)$, 
we use the notation as in the previous sections 
such as $E = f^{-1}(y)$ and $D = (f^{\log})^{-1}(\bar y)$.
Moreover we denote $C_I^{o\log} = \rho_Y^{-1}(C_I^o)$, 
$E_I^o = f^{-1}(C_I^o)$ and 
$D_I = \rho_X^{-1}(E_I^o) = f^{\log -1}(C_I^{o\log})$.
The mixed Hodge structure defined previously on $H^t(D, \mathbf{Z})$
varies continuously when $\bar y$ moves, and becomes a continuous
variation of mixed Hodge structures
$H_{\mathbf{Z}}^{\log} = R^tf^{\log}_*\mathbf{Z}_{D_I}$ 
on $C_I^{o \log}$.

Since the weight filtration $W$ is flat along this variation, 
there exists a 
filtration, denoted by $W$ again, on $H_{t,I}^o = H_t \otimes_{\mathcal{O}_Y} 
\mathcal{O}_{C_I^o}$ by locally free subsheaves on $C_I^o$ such that 
\[
W_q(H_{\mathbf{Z}}^{\log}) \otimes \mathcal{O}_{C_I^{o \log}}
\cong \rho_Y^{-1}W_q(H_{t,I}^o) \otimes_{\rho_{\mathcal{O}_{C_I^o}}} 
\mathcal{O}_{C_I^{o \log}}.
\]
The Hodge filtration $F$ on $H_{\mathbf{Z}}^{\log}$ 
also comes from downstairs:
\[
F^p(H_{\mathbf{Z}}^{\log} \otimes \mathcal{O}_{C_I^{o \log}})
\cong \rho_Y^{-1}F^p(H_{t,I}^o) \otimes_{\rho_{\mathcal{O}_{C_I^o}}} 
\mathcal{O}_{C_I^{o \log}}.
\]
We denote by $F_{t,I}^o = F^t(H_{t,I}^o)$ the highest Hodge part,
which coincides with 
$F_t \otimes_{\mathcal{O}_Y} \mathcal{O}_{C_I^o}$.
The canonical extension $H_{t,I}$ (resp. $F_{t,I}$) of $H_{t,I}^o$ 
(resp. $F_{t,I}^o$) across the 
boundary $\bar C^{[s+1]}$ coincides with 
$H_t \otimes_{\mathcal{O}_Y} \mathcal{O}_{C_I}$ 
(resp. $F_t \otimes_{\mathcal{O}_Y} \mathcal{O}_{C_I}$). 

Let $q$ be the smallest integer such that 
the surjective homomorphism $\phi^*F_t \to G$
induces a non-zero homomorphism $W_q(F_{t,I}) \to G$.
Then we have a non-zero homomorphism 
$\text{Gr}^W_q(F_{t,I}) \to G$.
The graded piece $\text{Gr}^W_q(H_{t,I})$ is a direct sum of the 
canonical extensions of variations of (pure) Hodge structures on $C_I$
defined by irreducible components of the intersection of some of 
the irreducible components of $E_I = f^{-1}(C_I)$.
Therefore we infer that $\text{deg}_{\Gamma}(G) \ge 0$ by the 
first part of this proof.

Now we consider the generalization to the case of 
well prepared algebraic fiber spaces without horizontal boundary components.
Let $\pi_Y: Y' \to Y$ a finite surjective Galois base change morphism
such that the induced algebraic fiber space $f': X' \to Y'$ is
weakly semistable.
We denote by $\pi_X: X' \to X$ the induced finite morphism.
We note that the ramification of $\pi_Y$ occurs not only along the
boundary divisor $C$, though we have $C' = \pi^{-1}C$.
Our assertion follows from the following two lammas.

\begin{Lem}
(1) $\pi_X^*\Omega^p_{X/Y}(\log) \cong \Omega^p_{X'/Y'}(\log)$ for all $p$.

(2) $R^qf_*\Omega^p_{X/Y}(\log)$ is locally free for all $p,q$.

(3) $\pi_Y^*R^qf_*\Omega^p_{X/Y}(\log) \cong R^qf'_*\Omega^p_{X'/Y'}(\log)$
for all $p,q$.
\end{Lem}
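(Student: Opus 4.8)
The plan is to prove the three statements of the Lemma in order, since each is essentially a local computation made possible by the toroidal structure together with the flatness of $\pi_Y$.

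For part (1), I would work locally in the toroidal charts provided by the Example in \S1. Near a point $x \in X$ with image $y = f(x)$, after passing to a smooth toroidal cover (so that we may assume $(X,B)$ is smooth), there are coordinates with $f^*y_i = \prod_j x_j^{r_{ij}}$ where each $j$ contributes to at most one $i$. The sheaf $\Omega^1_{X/Y}(\log)$ is the cokernel of $f^*\Omega^1_Y(\log C) \to \Omega^1_X(\log B)$, and in these coordinates $\Omega^1_X(\log B)$ is free on the $dx_j/x_j$ while $f^*(dy_i/y_i) = \sum_j r_{ij}\, dx_j/x_j$. The base change $\pi_Y: Y' \to Y$ is ramified along $C$ (and possibly elsewhere), but since $Y$ is smooth and $Y'$ is a smooth toroidal variety with $C' = \pi_Y^{-1}(C)$, locally $\pi_Y^*y_i$ is, up to a unit, a monomial in the coordinates $y_i'$ on $Y'$; because the normalization $(X',B')$ of the fiber product is again toroidal over $(Y',C')$, the analogous monomial presentation holds for $f'$. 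The key observation is that the residues (the $r_{ij}$ matrix, modulo the lattice identifications) are unchanged under the base change, so pulling back the defining exact sequence for $\Omega^1_{X/Y}(\log)$ along $\pi_X$ gives exactly the defining sequence for $\Omega^1_{X'/Y'}(\log)$. I would then take $p$-th exterior powers. Some care is needed for the quotient singularities of $X$ and $X'$: I would reduce to the smooth Galois covers and note that all sheaves in sight are the invariant parts, and the isomorphism is equivariant.

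Part (2) is already available: it is exactly statement (1) of the second Corollary in \S8 (local freeness of $R^qf_*\Omega^p_{X/Y}(\log)$), which was deduced from the existence of the cohomological mixed Hodge complex and the upper semi-continuity theorem. So I would simply cite that Corollary. (If one wants a self-contained argument in the present log-boundary generality, the same upper semi-continuity argument applies once one knows the fiberwise Hodge numbers $\sum_{p+q=k} h^q(E, \Omega^p_{E/\mathbf C}(\log))$ are constant, which follows from the $E_1$-degeneration in that Corollary.)

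For part (3), the strategy is flat base change. Since $Y$ is smooth and $\pi_Y$ is finite surjective, $\pi_Y$ is flat; likewise $\pi_X$ is finite. I would form the Cartesian-type square relating $f$, $f'$, $\pi_X$, $\pi_Y$ — strictly $X'$ is the normalization of $X \times_Y Y'$, not the fiber product itself, so this is the step requiring genuine attention. The plan is: first apply ordinary flat base change to get $\pi_Y^* R^q f_* \Omega^p_{X/Y}(\log) \cong R^q (f_{Y'})_* \nu^* \Omega^p_{X/Y}(\log)$ where $f_{Y'}: X\times_Y Y' \to Y'$ and $\nu: X' \to X\times_Y Y'$ is the normalization; then use part (1), in the form $\nu^*$ of the pulled-back log differentials agreeing with $\Omega^p_{X'/Y'}(\log)$ after normalization (this needs that the relevant sheaves are already reflexive/locally free so that normalization does not change them, which on the quasi-smooth toroidal $X'$ is fine), and the fact that $R^q\nu_* = 0$ for $q>0$ and $\nu_*\mathcal O_{X'} = \mathcal O_{X\times_Y Y'}$ off a set irrelevant to the locally free sheaves in question; then the Leray spectral sequence for $f' = f_{Y'}\circ\nu$ collapses to give $R^q(f_{Y'})_* \cong R^q f'_*$ applied to these sheaves. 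Combining, $\pi_Y^* R^q f_* \Omega^p_{X/Y}(\log) \cong R^q f'_* \Omega^p_{X'/Y'}(\log)$. The main obstacle is precisely controlling the discrepancy between $X\times_Y Y'$ and its normalization $X'$ and checking the log differential sheaves are insensitive to it; everything else is a standard diagram chase with flat base change and the vanishing of higher direct images under a finite map. I expect the cleanest route is to do the whole comparison locally in toroidal charts, where $X \times_Y Y'$ and $X'$ are explicit toric (up to quotient) and one sees directly both the normalization and the monomial presentations of the differentials.
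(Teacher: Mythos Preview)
Your proposal has a genuine gap in part (2). The Corollary in \S8 is stated under the hypothesis that $f$ is \emph{weakly semistable} (reduced fibers); the Lemma, however, sits in the passage of \S9 where $f$ is only well prepared and $f'$ is the weakly semistable base change. So you cannot cite \S8 for $f$ directly, and your parenthetical fallback (constancy of $\sum h^q(E,\Omega^p_{E/\mathbf C}(\log))$ via $E_1$-degeneration) has the same defect: that degeneration was only established on reduced fibers. The paper's argument is precisely designed to bridge this: using (1) one has $\Omega^p_{X/Y}(\log) \cong (\pi_{X*}\Omega^p_{X'/Y'}(\log))^G$, hence
\[
R^qf_*\Omega^p_{X/Y}(\log) \cong (\pi_{Y*}R^qf'_*\Omega^p_{X'/Y'}(\log))^G,
\]
and this is locally free as a direct summand of the pushforward under the flat $\pi_Y$ of a locally free sheaf (by \S8 applied to $f'$). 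So (2) depends on (1), and the logical order matters.

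For (1) your toroidal-chart computation is in the right spirit but runs into the difficulty that $\pi_Y$ is \emph{not} toroidal: its discriminant is a normal crossing divisor properly containing $C$ (as noted after Theorem~2), so your monomial description of $\pi_Y^*y_i$ is not available everywhere. The paper sidesteps this with a codimension-$2$ argument: over $X \setminus f^{-1}(C)$ the map $f$ is smooth and the claim is trivial; over the part of $f^{-1}(C)$ lying above points of $C$ not on the extra discriminant components, one has $\pi_X^*\Omega^p_X(\log B) \cong \Omega^p_{X'}(\log B')$ and $\pi_Y^*\Omega^p_Y(\log C) \cong \Omega^p_{Y'}(\log C')$; the remaining locus has codimension $\ge 2$, and the sheaves are locally free, so one is done. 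This is both shorter and more robust than a full chart-by-chart check.

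For (3) your flat-base-change route through the normalization is workable but heavier than necessary. Once (2) is in hand, the paper simply invokes local freeness plus upper semi-continuity (i.e.\ Grauert-type cohomology and base change): the natural map $\pi_Y^*R^qf_*\Omega^p_{X/Y}(\log) \to R^qf'_*\Omega^p_{X'/Y'}(\log)$ is then an isomorphism without any need to analyze $X \times_Y Y'$ versus its normalization.
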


\begin{proof}
(1) The isomorphism is the pull-back homomorphism.
We check that it is bijective.
If $x \in X \setminus f^{-1}(C)$, then it is bijective 
in a neighbborhood of $x$ because $f$ is smooth there.
If $f(x) \in C$ but $f(x)$ is not contained in any irreducible component
of the discriminant locus of $\pi_Y$ other than those of $C$, 
then we have
$\pi_X^*\Omega^p_X(\log B) \cong \Omega^p_{X'}(\log B')$ and 
$\pi_Y^*\Omega^p_Y(\log C) \cong \Omega^p_{Y'}(\log C')$, hence an isomorphism
in a neighbborhood of $x$.
Therefore our homomorphism is bijective outside a codimension $2$ subset, 
and we are done.

(2) Since $(\pi_{X*}\pi_X^*\Omega^p_{X/Y}(\log))^G \cong \Omega^p_{X/Y}(\log)$,
we have 
\[
R^qf_*\Omega^p_{X/Y}(\log) \cong 
(\pi_{Y*}R^qf'_*\Omega^p_{X'/Y'}(\log))^G.
\]
Since $\pi_Y$ is flat, we have our assertion.

(3) This follows from the local freeness theorem and 
the upper semi-continuity theorem.
\end{proof}

\begin{Lem}
Let $\pi: Y' \to Y$ be a finite surjective 
morphism of proper varieties, and let $F$ be a locally free sheaf on $Y$.
Then $F$ is numerically semipositive if and only 
if $\pi^*F$ is numerically semipositive.
\end{Lem}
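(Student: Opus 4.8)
The plan is to reduce everything to the numerical characterization recalled above: $F$ is numerically semipositive precisely when $\mathcal{O}_{\mathbf{P}(F)}(1)$ is nef, or equivalently when $\deg_\Gamma(G) \ge 0$ for every morphism $\phi : \Gamma \to Y$ from a smooth projective curve $\Gamma$ and every invertible quotient sheaf $G$ of $\phi^*F$. Both implications are then essentially bookkeeping with intersection numbers; only one of them actually uses that $\pi$ is finite and surjective.

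The direction ``$F$ numerically semipositive $\Rightarrow$ $\pi^*F$ numerically semipositive'' is formal and valid for an arbitrary morphism $\pi$. One has $\mathbf{P}(\pi^*F) \cong \mathbf{P}(F)\times_Y Y'$, and the projection $\Pi : \mathbf{P}(\pi^*F) \to \mathbf{P}(F)$ satisfies $\Pi^*\mathcal{O}_{\mathbf{P}(F)}(1) \cong \mathcal{O}_{\mathbf{P}(\pi^*F)}(1)$; since the pullback of a nef invertible sheaf under any morphism of proper varieties is nef, $\pi^*F$ is numerically semipositive. (On the curve side: given $\phi : \Gamma \to Y'$ and an invertible quotient $G$ of $\phi^*\pi^*F = (\pi\circ\phi)^*F$, apply the hypothesis to $\pi\circ\phi : \Gamma \to Y$.)

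For the converse I would use the curve criterion together with a base change to a covering curve. Let $\phi : \Gamma \to Y$ be a morphism from a smooth projective curve and $G$ an invertible quotient of $\phi^*F$; I must show $\deg_\Gamma(G) \ge 0$. Form $\Gamma \times_Y Y'$, pick an irreducible component that dominates $\Gamma$, and let $\Gamma''$ be its normalization; then $\psi : \Gamma'' \to \Gamma$ is a finite surjective morphism of smooth projective curves of some degree $d \ge 1$, and $\phi\circ\psi$ lifts to $\phi' : \Gamma'' \to Y'$ with $\pi\circ\phi' = \phi\circ\psi$. Since $(\phi')^*\pi^*F \cong \psi^*\phi^*F$, the sheaf $\psi^*G$ is an invertible quotient of $(\phi')^*\pi^*F$, so numerical semipositivity of $\pi^*F$ gives $\deg_{\Gamma''}(\psi^*G) \ge 0$; but $\deg_{\Gamma''}(\psi^*G) = d\cdot\deg_\Gamma(G)$ with $d > 0$, whence $\deg_\Gamma(G) \ge 0$. (Equivalently one can argue one dimension up: the base change $\Pi : \mathbf{P}(F)\times_Y Y' \to \mathbf{P}(F)$ of $\pi$ is finite surjective and pulls $\mathcal{O}_{\mathbf{P}(F)}(1)$ back to the nef sheaf $\mathcal{O}_{\mathbf{P}(\pi^*F)}(1)$, and nefness descends along finite surjective morphisms by exactly the same degree computation on curves.)

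The only genuine point requiring care, and the step I would treat most carefully, is the construction of $\Gamma''$: the fibre product $\Gamma \times_Y Y'$ need not be irreducible, reduced, or purely one-dimensional, so one must first pass to a component dominating $\Gamma$ and then normalize, and check that the resulting map $\psi : \Gamma'' \to \Gamma$ is finite and surjective of positive degree. This is exactly where the hypothesis that $\pi$ is finite surjective enters, guaranteeing that such a dominating component exists and maps finitely onto $\Gamma$. Granted that, the remainder is just multiplicativity of degree under finite base change and the functoriality of nefness, both routine.
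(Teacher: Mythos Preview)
Your proof is correct and is essentially the same argument as the paper's, written out in full detail; the paper compresses the whole thing into ``reduce to the case where $F$ has rank $1$, and the latter is clear,'' which is exactly your parenthetical alternative via $\Pi:\mathbf{P}(F)\times_Y Y'\to\mathbf{P}(F)$, while your main text simply unpacks what ``clear'' means at the level of curves.
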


\begin{proof}
The assertion is reduced to the case where the rank of $F$ is equal to $1$, 
and the latter is clear.
\end{proof}

We postpone the proof of the theorem in the case where there 
are horizontal boundary components to the end of the next section.
\end{proof}

%%%%%%%%%%%%%%%%%%%%%%%%%%%%%%%%%%%%%%%%%%%%%%%%%%%%%
\section{Weight spectral sequences}

We use the following notation.
$f: (X.B) \to (Y,C)$ is the given well prepared algebraic fiber space.
We write $B = B^h + B^v$ for $B^v = f^{-1}(C)$ ($h$ stands for 
horizontal and $v$ the vertical).
We denote by $B^h_I$ an irreducible component of the intersection of 
some of the irreducible components of $B^h$, where $I$ stands for 
a set of indices which determines $B^h_I$ in a similar way as in the previous 
sections.
Such $B^h_I$ is called a {\em stratum} of $X$ in this proof.
For a fixed stratum $B^h_I$, we denote by $G_I = B^v \cap B^h_I$.
Then the induced morphism $f_I: (B^h_I,G_I) \to (Y,C)$
is a well prepared algebraic fiber spaces
without horizontal boundary components.
In particular, we have $(B^h_{\emptyset},G_{\emptyset}) = (X,B^v)$.

Let $\rho_{B^h_I}: B^{h\log}_I \to (B^h_I,G_I)$
be the real oriented blowing-up, 
and let $f^{\log}_I: B^{h\log}_I \to Y^{\log}$ be the induced morphism
of topological spaces.
The induced morphism $\sigma: X^{\log} \to B^{h\log}_{\emptyset}$ is the
partial real oriented blowing-up along the horizontal boundary component 
such that $f^{\log} = f^{\log}_{\emptyset} \circ \sigma$.

We define a weight filtration $W$ on a higher direct image 
$R\sigma_*\mathbf{Q}_{X^{\log}}$ on $B^{h\log}_{\emptyset}$ 
as the canonical filtration $\tau$ as before.
Then we have
\[
\text{Gr}^W_q(R\sigma_*\mathbf{Q}_{X^{\log}})
\cong \bigoplus_{\text{codim }B^h_I = q} \mathbf{Q}_{B^{h\log}_I}[-q]
\]
where the $B^{h\log}_I$ are regarded as closed subspaces 
of $B^{h\log}_{\emptyset}$.
There is an induced weight filtration, denoted again by $W$, 
on the higher direct image 
$Rf^{\log}_*\mathbf{Q}_{X^{\log}}$ on $Y^{\log}$ such that
\[
\text{Gr}^W_q(Rf^{\log}_*\mathbf{Q}_{X^{\log}})
\cong \bigoplus_{\text{codim }B^h_I = q} 
Rf^{\log}_*\mathbf{Q}_{B^{h\log}_I}[-q].
\]
In other words, we have distinguished triangles
\[
\begin{split}
&W_{q-1}(Rf^{\log}_*\mathbf{Q}_{X^{\log}}) \to 
W_q(Rf^{\log}_*\mathbf{Q}_{X^{\log}}) \\
&\to 
\text{Gr}^W_q(Rf^{\log}_*\mathbf{Q}_{X^{\log}}) \to
W_{q-1}(Rf^{\log}_*\mathbf{Q}_{X^{\log}})[1]
\end{split}
\]
so that $Rf^{\log}_*\mathbf{Q}_{X^{\log}}$ is a convolution of a 
complex of objects
\[
\begin{split}
&\cdots \to \bigoplus_{\text{codim }B^h_I = q} 
Rf^{\log}_*\mathbf{Q}_{B^{h\log}_I}[-2q] 
\to \bigoplus_{\text{codim }B^h_I = q-1} 
Rf^{\log}_*\mathbf{Q}_{B^{h\log}_I}[-2q+2]
\to \\
&\dots \to 
\bigoplus_{\text{codim }B^h_I = 1} 
Rf^{\log}_*\mathbf{Q}_{B^{h\log}_I}[-2]
\to Rf^{\log}_*\mathbf{Q}_{B^{h\log}_{\emptyset}}.
\end{split}
\]

We have a decomposition theorem for the constant sheaf when there is no horizontal components:

\begin{Thm}
Let $f: (X.B) \to (Y,C)$ be a well prepared algebraic fiber space,
and $f^{\log}: X^{\log} \to Y^{\log}$ the associated map of log spaces.
Assume that there is no horizontal component of $B$, i.e., $B = f^{-1}(C)$.
Then 
\[
Rf^{\log}_*\mathbf{Q}_{X^{\log}}
\cong \bigoplus_i R^if^{\log}_*\mathbf{Q}_{X^{\log}}[-i].
\]
\end{Thm}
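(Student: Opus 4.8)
The plan is to prove the theorem first in the case where $f$ has no horizontal boundary component, i.e. $B=B^v=f^{-1}(C)$, and then to reduce the general case to this one by a Mayer--Vietoris argument along the horizontal strata $B^h_I$, each $f_I\colon (B^h_I,G_I)\to(Y,C)$ being again well prepared but without horizontal boundary.

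\emph{The case $B^h=\emptyset$.} Since $f$ is well prepared, $f^{\log}\colon X^{\log}\to Y^{\log}$ is a proper, locally trivial topological fibre bundle (the corollary in Section~2), so $\mathcal{F}:=Rf^{\log}_*\mathbf{Q}_{X^{\log}}$ is a bounded complex of $\mathbf{Q}$-sheaves on $Y^{\log}$ whose cohomology sheaves $R^if^{\log}_*\mathbf{Q}_{X^{\log}}$ are local systems (the fibres are compact of real dimension $2n$). Choose a relatively ample invertible sheaf on $X$; cup product with its first Chern class gives $u\in\mathrm{Hom}(\mathcal{F},\mathcal{F}[2])$. Because $Y^{\log}$ is connected and contains $Y\setminus C$ as a dense open subset, to verify that $u^i\colon R^{n-i}f^{\log}_*\mathbf{Q}_{X^{\log}}\to R^{n+i}f^{\log}_*\mathbf{Q}_{X^{\log}}$ is an isomorphism for all $i\ge 0$ it suffices to check this at a single point of $Y\setminus C$, where the fibre is a smooth projective variety of dimension $n$ with empty boundary and $u$ acts as the Lefschetz operator; classical hard Lefschetz then applies. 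Deligne's criterion (Theorem~\ref{LD}) now yields $\mathcal{F}\cong\bigoplus_i R^if^{\log}_*\mathbf{Q}_{X^{\log}}[-i]$.

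\emph{The general case.} Factor $f^{\log}=f^{\log}_{\emptyset}\circ\sigma$ where $\sigma\colon X^{\log}\to B^{h\log}_{\emptyset}$ is the partial real oriented blow-up along the horizontal boundary. Over $B^{h\log}_{\emptyset}$ the complex $R\sigma_*\mathbf{Q}_{X^{\log}}$ is the derived pushforward from the complement of the (normal crossing) preimage of $B^h$, hence carries the canonical weight filtration whose graded pieces are, up to shift, the constant sheaves on the log spaces $(B^h_I)^{\log}$. Applying $Rf^{\log}_{\emptyset*}$ exhibits $\mathcal{F}$ as a convolution in the triangulated category $D^b(Y^{\log},\mathbf{Q})$ of the objects $K_I:=Rf^{\log}_{I*}\mathbf{Q}_{B^{h\log}_I}[-|I|]$. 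Each $f_I$ is well prepared without horizontal boundary, so each $K_I$ splits as above; moreover the relatively ample class on $X$ restricts to relatively ample classes on the $B^h_I$, giving compatible Lefschetz operators, and over $Y\setminus C$ the primitive parts of the $R^jf^{\log}_{I*}\mathbf{Q}$ underlie polarizable variations of Hodge structures.

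\emph{Splitting the convolution.} The spectral sequence of this convolution degenerates at $E_2$: this can be checked stalkwise on $Y^{\log}$, where it is the weight spectral sequence of the (mixed) Hodge complex on a fibre, whose $E_2$-degeneration was established in Sections~6--8 (after the finite flat base change that turns $f$ into a weakly semistable fibre space, which does not affect the degeneration). Its $E_1$-differentials are morphisms of local systems on $Y^{\log}$ which over $Y\setminus C$ are morphisms of polarizable variations of Hodge structures, hence admit splittings by the semi-simplicity theorem (Theorem~\ref{SS}); thus each $E_2$-term is a direct summand of the corresponding $E_1$-term, an identification that persists over the boundary of $Y^{\log}$ because the local monodromies are unipotent and everything is controlled by the canonical extension from $Y\setminus C$. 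Feeding this into the standard argument that a convolution all of whose terms split and whose associated spectral sequence degenerates at $E_2$ is itself split produces the desired decomposition. I expect this last step to be the main obstacle: one must carefully manage the iterated triangles and the extension data, verify the vanishing of the higher differentials, and confirm that the splittings of Hodge-theoretic morphisms over $Y\setminus C$ are compatible with the structures carried over all of $Y^{\log}$.
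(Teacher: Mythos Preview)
Your outline is essentially the paper's own strategy: settle the case $B^h=\emptyset$ by hard Lefschetz plus Deligne's criterion, then filter $R\sigma_*\mathbf{Q}_{X^{\log}}$ by the canonical filtration along the horizontal strata, use $E_2$-degeneration of the resulting weight spectral sequence together with semi-simplicity of polarized VHS to split the $E_1$-terms, and finally show the convolution splits.

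Two points deserve sharpening. First, the $E_2$-degeneration of the spectral sequence~(\ref{horizontal1}) is not the one treated in \S\S6--8: that one concerns the \emph{vertical} degeneration. The horizontal weight spectral sequence here degenerates at $E_2$ because at a point of $Y\setminus C$ the fibre is smooth projective with a normal crossing divisor $B^h\cap X_y$, so one is in the classical situation of \cite{Deligne-II}; since all the sheaves involved are local systems on $Y^{\log}$, the degeneration propagates. Second, and more importantly, the ``standard argument'' you invoke in the last paragraph is not standard: a convolution whose terms split and whose spectral sequence degenerates at $E_2$ need \emph{not} split in general. The paper's actual mechanism is to use the semi-simplicity decomposition $E_1^{p,q}\cong E_2^{p,q}\oplus\operatorname{Im}d_1^{p-1,q}\oplus\operatorname{Im}d_1^{p,q}$ to pass from the filtration $W_q$ to a modified filtration $W'_q$ (via an octahedral/nine-term diagram) so that the new graded pieces are the $G_{2,q}=\bigoplus_k E_2^{-q,k}[q-k]$ and the new boundary maps are zero; one then proves, by descending induction using the vanishing of $d_r$ for $r\ge 2$, that the connecting maps $H^p(G_{2,q})\to H^{p+1}(W'_{q-1})$ vanish, whence the splitting follows by induction on $q$. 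You have correctly located this as the crux, but it is precisely this $W'_q$-modification and the accompanying lemma that constitute the missing content of your sketch.
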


\begin{proof}
Let $L \in H^2(X,\mathbf{Z})$ be the class of an ample divisor
on $X$, and $y \in Y \setminus C$ a point.
Then the fiber $f^{-1}(y)$ is a smooth projective variety and the
Lefschetz decomposition theorem holds.
Therefore we have a decomposition theorem for the fiber
by Theorem~\ref{LD}.
Since our sheaves are locally constant on the whole log space
$Y^{\log}$, the Lefschetz decomposition theorem extends to singular fibers,
and we are done.
\end{proof}

By the above theorem, we have
\[
Rf^{\log}_*\mathbf{Q}_{B^{h\log}_I}
\cong \bigoplus_p R^pf^{\log}_*\mathbf{Q}_{B^{h\log}_I}[-p].
\]
With respect to this decomposition, 
the boundary morphisms of the above complex are the sums of 
the Gysin homomorphisms
\[
R^pf^{\log}_*\mathbf{Q}_{B^{h\log}_I} \to 
R^{p+2}f^{\log}_*\mathbf{Q}_{B^{h\log}_J}
\]
for immersions $B^h_I \subset B^h_J$ of codimension $1$.

There is a spectral sequence associated to the weight filtration
\begin{equation}\label{horizontal1}
\begin{split}
&E_1^{p,q} = H^{p+q}(\text{Gr}^W_{-p}(Rf^{\log}_*\mathbf{Q}_{X^{\log}})
= \bigoplus_{\text{codim }B^h_I = -p} 
R^{2p+q}f^{\log}_*\mathbf{Q}_{B^{h\log}_I} \\
&\Rightarrow R^{p+q}f^{\log}_*\mathbf{Q}_{X^{\log}}
\end{split}
\end{equation}
where the differentials $d_1^{p,q}$ are nothing but the Gysin
homomorphisms above.
It degenerates at $E_2$ by \cite{Deligne-II}; 
we have $d_r^{p,q}=0$ for all $r \ge 2$.

\vskip 1pc

Next we define a weight filtration on 
$\bigoplus_k \Omega^k_{X/Y}(\log)[-k]$
by the order of log poles along $B^h$ so that we have
\[
\text{Gr}^W_q(\bigoplus_k \Omega^k_{X/Y}(\log)[-k])
\cong \bigoplus_k \bigoplus_{\text{codim }B^h_I = q} 
\Omega^k_{B^h_I/Y}(\log)[-k-q]
\]
where the $B^h_I$ are regarded as closed subspaces 
of $X$ whose boundaries have only vertical components.
We note that the isomorphism does not hold for individual $k$.
There is an induced weight filtration, denoted again by $W$, 
on the higher direct image 
$\bigoplus_k Rf_*\Omega^k_{X/Y}(\log)[-k]$ on $Y$ such that
\[
\text{Gr}^W_q(\bigoplus_k Rf_*\Omega^k_{X/Y}(\log)[-k])
\cong \bigoplus_k \bigoplus_{\text{codim }B^h_I = q} 
Rf_*\Omega^k_{B^h_I/Y}(\log)[-k-q].
\]
Thus $\bigoplus_k Rf_*\Omega^k_{X/Y}(\log)[-k]$ is a convolution of a 
complex of objects
\[
\begin{split}
&\cdots \to \bigoplus_k \bigoplus_{\text{codim }B^h_I = q} 
Rf_*\Omega^k_{B^h_I/Y}(\log)[-k-2q] \\
&\to \bigoplus_k \bigoplus_{\text{codim }B^h_I = q-1} 
Rf_*\Omega^k_{B^h_I/Y}(\log)[-k-2q+2]
\to \\
&\dots \to 
\bigoplus_k \bigoplus_{\text{codim }B^h_I = 1} 
Rf_*\Omega^k_{B^h_I/Y}(\log)[-k-2] \\
&\to \bigoplus_k Rf_*\Omega^k_{B^h_{\emptyset}/Y}(\log)[-k].
\end{split}
\]

Now we prove the decomposition theorem for the sheaves of logarithmic 
differential forms when there is no horizontal components:

\begin{Cor}
Let $f: (X.B) \to (Y,C)$ be a well prepared algebraic fiber space.
Assume that there is no horizontal component of $B$, i.e., $B = f^{-1}(C)$.
Then 
\[
Rf_*\Omega^k_{X/Y}(\log)
\cong \bigoplus_i R^if_*\Omega^k_{X/Y}(\log)[-i]
\]
for all $k$.
\end{Cor}

\begin{proof}
We shall prove an isomorphism of their sum
\[
\bigoplus_k Rf_*\Omega^k_{X/Y}(\log)[-k]
\cong \bigoplus_i \bigoplus_k R^if_*\Omega^k_{X/Y}(\log)[-i-k]
\]
By the $E_1$-degeneration of the spectral sequence associated to the 
Hodge filtration,
we have Lefschetz type isomorphisms for the direct sum
$\bigoplus_k Rf_*\Omega_{X/Y}^k(\log)[-k]$.
Therefore we obtain the assertion.
\end{proof}

We remark that Koll\'ar's result in \cite{Kollar} on the sheaf of 
top differential forms $\omega_X$ holds without the assumptions
on the well preparedness of the morphism, 
because $\omega_X$ behaves well under the birational morphisms
by the Grauert-Riemenschneider vanishing theorem.

By the above corollary, we have
\[
\bigoplus_k Rf_*\Omega^k_{B^h_I/Y}(\log)[-k]
\cong \bigoplus_i \bigoplus_k R^if_*\Omega^k_{B^h_I/Y}(\log)[-i-k].
\]
With respect to this decomposition, 
the boundary morphisms of the above complex are the sums of 
the Gysin homomorphisms
\[
R^qf_*\Omega^k_{B^h_I/Y}(\log) \to 
R^{q+1}f_*\Omega^{k+1}_{B^h_J/Y}(\log)
\]
for immersions $B^h_I \subset B^h_J$ of codimension $1$.
We note that underlying varieties of $B^h_{\emptyset}$ and $X$ are the same but
their boundaries are different.

There is a spectral sequence associated to the weight filtration
\begin{equation}\label{horizontal2}
\begin{split}
&E_1^{p,q} = \bigoplus_k H^{p+q-k}(\text{Gr}^W_{-p}(Rf_*\Omega^k_{X/Y}(\log))
\\
&= \bigoplus_k \bigoplus_{\text{codim }B^h_I = -p} 
R^{2p+q-k}f_*\Omega^k_{B^h_I/Y}(\log) 
\Rightarrow \bigoplus_k R^{p+q-k}f_*\Omega^k_{X/Y}(\log)
\end{split}
\end{equation}
where the differentials $d_1^{p,q}$ are nothing but the Gysin
homomorphisms above.
It degenerates at $E_2$ by \cite{Deligne-II}; 
we have $d_r^{p,q}=0$ for all $r \ge 2$.

\begin{proof}[Proof of Theorem~\ref{SP} continued]
We continue the proof of the semipositivity theorem in the case
where there are horizontal boundary components.
We use spectral sequences (\ref{horizontal1}) and (\ref{horizontal2}) 
associated to the weight filtrations, which will be denoted by ${}_IE_1^{p,q}$ 
and ${}_{II}E_1^{p,q}$.
The second one ${}_{II}E_1^{p,q}$ is obtained from the first one 
${}_IE_1^{p,q}$ by the decomposition with respect to the Hodge filtration:
\[
\bigoplus_k \text{Gr}^F_k({}_IE_1^{p,q} \vert_{Y \setminus C} 
\otimes \mathcal{O}_{Y \setminus C})
\cong {}_{II}E_1^{p,q} \vert_{Y \setminus C}.
\]
They degenerate at the $E_2$-terms.

Let $F$ denote either $f_*\Omega_{X/Y}^k(\log)$ or 
$R^kf_*\Omega_{X/Y}^n(\log)$. 
It has an induced weight filtration, and we have
\[
\text{Gr}^W(F) \cong 
\bigoplus_p F^{k+p}({}_{II}E_2^{p,k-p}) \quad \text{or} \quad 
\bigoplus_p F^{n+p}({}_{II}E_2^{p,k+n-p}). 
\]
Since $F^{k+p+1}({}_{II}E_2^{p,k-p}) = 0$ and 
$F^{n+p+1}({}_{II}E_2^{p,k+n-p}) = 0$, we deduce that
$\text{Gr}^W(F)$ is numerically semipositive, hence so is $F$.
\end{proof}

%%%%%%%%%%%%%%%%%%%%%%%%%%%%%%%%%%%%%%%%%%%%%%%%%%%%%

Graduate School of Mathematical Sciences, University of Tokyo,
Komaba, Meguro, Tokyo, 153-8914, Japan 

kawamata@ms.u-tokyo.ac.jp

\end{document}